\documentclass[11pt,reqno]{amsart}
\usepackage{hyperref}

\allowdisplaybreaks[4]

\usepackage{amsmath}
\usepackage{amssymb}
\usepackage{mathrsfs}
\usepackage{amsthm}
\usepackage{bm}

\usepackage{booktabs}
\usepackage{float}
\usepackage{graphicx,epstopdf}
\usepackage[caption=false]{subfig}
\allowdisplaybreaks

\graphicspath{{Fig/}}
\usepackage{geometry}
\usepackage{color}
\theoremstyle{plain} 
\newtheorem{lemma}{Lemma}[section]
\newtheorem{thm}[lemma]{Theorem}

\newtheorem{coro}[lemma]{Corollary}
\newtheorem{remark}[lemma]{Remark}
\newtheorem{prop}[lemma]{Proposition}

\newcommand{\rmd}{\mathrm d}

\begin{document}
	\title[]{Random attractors and almost-sure stability under discretization of a stochastic autoparametric system}
	\author[Chuchu Chen, Jialin Hong, Yibo Wang]{Chuchu Chen, Jialin Hong, Yibo Wang*}
\address{State Key Laboratory of Mathematical Sciences, Academy of Mathematics and Systems Science, Chinese Academy of Sciences, Beijing 100190, People's Republic of China, 
\and 
School of Mathematical Sciences, University of Chinese Academy of Sciences, Beijing 100049, China}
\email{chenchuchu@lsec.cc.ac.cn; hjl@lsec.cc.ac.cn; wangyb@amss.ac.cn}
\thanks{This work is funded by the National Key R\&D Program of China under Grant (No. 2024YFA1015900 and No. 2020YFA0713701),  by the National Natural Science Foundation of China (No. 12031020, No. 12461160278, and No. 12471386), and by Youth Innovation Promotion Association CAS}
\subjclass{37M22, 37M25, 60H35}
\thanks{*Corresponding author} 
	\begin{abstract} 
		For a stochastic autoparametric block-and-pendulum system, the long-time dynamics exhibit two fundamental features: 
		the almost-sure stability of the single mode solution, characterized by its Lyapunov exponent, and the global asymptotic dynamics when this single mode solution  loses stability. This naturally raises the question of whether these dynamical features are preserved under discretization, since such preservation is essential for the resulting discrete system to faithfully capture the qualitative behavior of the continuous system. To address this question, we first establish the existence of a random attractor for the continuous system subject to multiplicative stochastic excitation, providing a rigorous characterization of the global asymptotic dynamics. We then propose a numerical discretization that induces a discrete random dynamical system and prove the convergence of its random attractor to the continuous one as the step size tends to zero. In addition, we show that the numerical Lyapunov exponent of the single mode solution has the same sign as its continuous counterpart for sufficiently small step sizes, thus preserving the corresponding almost-sure stability or instability classification. These results demonstrate that the proposed discretization captures both the global asymptotic dynamics and the stability characteristics of the underlying stochastic autoparametric system. 
	\end{abstract}
	\keywords{Single mode solution $\cdot$ Random attractor $\cdot$ Lyapunov exponent $\cdot$ Stability $\cdot$ Discrete random dynamical system}
	\maketitle
	\section{Introduction} 
	Autoparametric systems  constitute a classical class of nonlinear oscillators in which energy can be transferred between interacting modes through internal resonance \cite{Vyas2001}. 
	Representative examples include block-and-pendulum models \cite{Bajaj1994,Banerjee1996,Hatwal1983}, block-and-beam models \cite{Haxton1972,Yan2017}, and other related nonlinear mechanical models. 
	A characteristic feature of these systems is the existence of a single mode solution, where the primary component oscillates while the secondary component stays at rest. The loss of stability of this single mode solution is associated with the onset of autoparametric energy transfer from the primary component to the secondary one, which underlies the vibration-absorption mechanism of these systems. In the presence of stochastic excitation, both the stability threshold and the ensuing long-time behavior become path-dependent, raising fundamental questions concerning almost-sure stability and global asymptotic behaviors.

	In this paper, we consider a stochastic block-and-pendulum autoparametric system consisting of a primary block-spring-damper subsystem coupled to a pendulum as the secondary subsystem. Its dimensionless form is given by 
	\begin{equation}\label{SDE}
		\left\{
		\begin{aligned}
			&\ddot{\ell}(t) + 2\zeta_1 \dot{\ell}(t) + \kappa_{1} \ell(t) - \gamma \big( \ddot{\vartheta}(t) \sin \vartheta(t) + \dot{\vartheta}^2(t) \cos \vartheta(t) \big) = \sigma \dot{W}(t), \\
			&\ddot{\vartheta}(t) + 2\zeta_2 \dot{\vartheta}(t) + (\kappa_{2} - \ddot{\ell}(t)) \sin \vartheta(t) = 0,
		\end{aligned} \right.
	\end{equation} 
	where the constants $\zeta_1$ and $\zeta_2$ are scaled damping coefficients, $\kappa_{1}$ is the dimensionless stiffness of the spring, $\kappa_{2}$ is a dimensionless parameter associated with the frequency of the undamped pendulum, and $\sigma$ is the scaled noise intensity. The parameter $\gamma = m_2/(m_1 + m_2)\in(0,1)$ represents the mass ratio, where $m_1$ and $m_2$ are the masses of the block and the pendulum bob, respectively.
    When recast into first-order state-space form, system \eqref{SDE} is a stochastic differential equation (SDE) driven by multiplicative noise; see \eqref{system} below. 
	A schematic illustration of the configuration is shown in Figure \ref{Fig:system}. 
	
	System \eqref{SDE} possesses a single mode solution of the form $(\ell(t),\dot{\ell}(t),\vartheta(t),\dot\vartheta(t)) = (\eta(t),\dot{\eta}(t),0,0)$, 
	under which the dynamics reduce to a linear stochastic oscillator for the primary subsystem: 
	\begin{equation*}
		\ddot{\eta}(t) + 2\zeta_1 \dot{\eta}(t) + \kappa_{1} \eta(t) = \sigma \dot{W}(t). 
	\end{equation*} 
	The single mode solution takes values in the invariant manifold $\mathcal{N}_0=\{(\ell,\dot{\ell},0,0):\ell,\dot{\ell}\in\mathbb R\}$, which corresponds to motions for which the pendulum remains vertical. 
	A basic issue is whether $\mathcal{N}_0$ is stable with respect to transverse perturbations in $(\vartheta,\dot{\vartheta})$, namely, whether small pendulum-angle perturbations away from $\mathcal{N}_0$ decay or grow. 
	Linearization of \eqref{SDE} along the single mode solution yields a system of parametrically excited stochastic equations \eqref{linearization}. Its top Lyapunov exponent $\lambda_{0}$ provides a criterion for the transverse almost-sure stability of the single mode solution: $\lambda_{0}<0$ implies stability, whereas $\lambda_{0}>0$ implies instability. 
	In the past decades, the stability of the single mode solution has received considerable attention; see, e.g., \cite{Ariaratnam1991,Namachchivaya2007}. 
	Recently, Baxendale and Namachchivaya \cite{Baxendale24SIAM} derived rigorous asymptotic estimates for $\lambda_{0}$ and established explicit criteria determining the sign of $\lambda_{0}$. These results provide a rigorous almost-sure stability theory for the single mode solution.

	\begin{figure} 
		\centering
		\includegraphics[scale = 0.4]{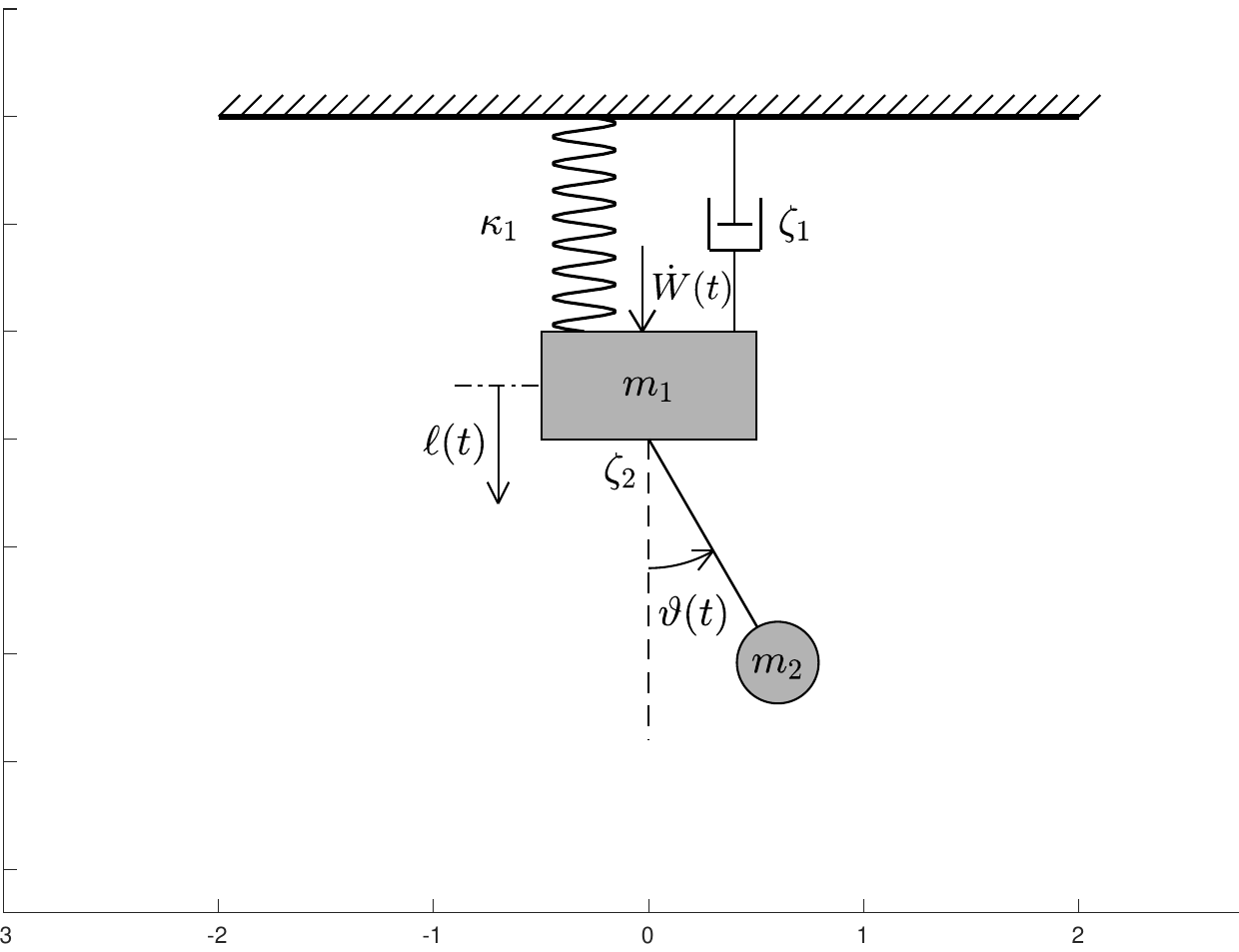}  
		\caption{Autoparametric system: block and pendulum}
		\label{Fig:system}
	\end{figure}

	While the Lyapunov exponent characterizes the local transverse stability near the invariant manifold $\mathcal{N}_0$, it is also natural to ask what global asymptotic behavior may emerge once trajectories leave a neighborhood of $\mathcal N_0$. 
	In particular, 
    after the onset of autoparametric activation, 
    trajectories may evolve far away from the invariant manifold and approach nontrivial random invariant sets. Understanding such post-activation dynamics naturally leads to the framework of random dynamical systems (RDSs) and random attractors. The theory of random attractors has been extensively developed for RDSs; see, e.g., \cite{arnoldRDS,Crauel1994}. 
    However, to the best of our knowledge, random attractors have not been established for stochastic autoparametric systems of the type considered here. 
	This observation raises the following question: 
	\begin{itemize}
		\item[(Q1)] Does the stochastic system \eqref{SDE} generate an RDS possessing a random attractor that captures its global asymptotic dynamics?
	\end{itemize}

Numerical approximation plays an essential role in the investigation of such long-time dynamical features, which raises the question of whether these features are faithfully captured under time discretization. Considerable progress has been made in the approximation of long-time properties of stochastic systems, including invariant measures, ergodicity, and random attractors; see, e.g., \cite{Caraballo2024,Chen20252,Chen2025,Han2017,Li2019}.  
Nevertheless, the simultaneous approximation of the random attractor and the preservation of Lyapunov-exponent-based almost-sure stability under discretization remain much less understood, particularly for stochastic autoparametric systems. At the numerical level, time discretization gives rise to a computable discrete RDS, making it important to establish a rigorous connection between its long-time dynamics and those of the underlying continuous RDS. This leads to the second question:
    
	\begin{itemize}
		\item[(Q2)] 
        Can the random attractor of the continuous RDS be approximated by that of the discrete RDS, while preserving the almost-sure stability classification of the single mode solution?
	\end{itemize}

	In this work, we give an affirmative answer to both questions. For (Q1), we prove that the continuous RDS generated by \eqref{SDE} admits a random attractor, yielding a global description of its asymptotic dynamics. To the best of our knowledge, this is the first result establishing a random attractor for a stochastic autoparametric system of the type considered here.
	For (Q2), we propose a discretization that induces a discrete RDS and establish two qualitative approximation properties: the random attractor of the discrete system converges to that of the continuous system as the step size tends to zero, and the numerical Lyapunov exponent of the single mode solution preserves the sign of its continuous counterpart for sufficiently small step sizes. Thus, the discretization captures both the global asymptotic dynamics and the almost-sure stability classification of the continuous system. 
	
	At the continuous level, the main difficulty in proving the existence of a random attractor lies in the noise structure of the corresponding first-order SDE. Although the stochastic forcing enters additively in the implicit second-order formulation \eqref{SDE}, resolving the coupled accelerations gives rise to state-dependent diffusion coefficients. This multiplicative-noise structure obscures the dissipative estimates required to establish pullback absorption.
	To overcome this obstacle, we introduce a transformation that converts the system into an equivalent SDE with additive noise while retaining the key dissipative structure. This enables us to establish a tempered random absorbing set for the continuous RDS and hence the existence of a random attractor.
	
	At the discrete level, two additional difficulties arise. First, discretization does not automatically preserve the dissipative structure available in the continuous setting, making pullback absorption nontrivial. To address this issue, we introduce an auxiliary transformation tailored to the numerical scheme and recover a suitable discrete dissipative structure, which yields a random absorbing set and ultimately allows us to establish the convergence of the discrete random attractor.  
	Second, since the numerical Lyapunov exponent $\lambda^\tau$ is defined through the asymptotic growth rate of the variational dynamics, it is inherently a long-time quantity and cannot be inferred directly from finite-time convergence alone. 
	Our strategy combines ergodic arguments, stationary measure approximation, and strong convergence estimates to quantify the difference between $\lambda^\tau$ and $\lambda_0$, implying the sign consistency for sufficiently small $\tau$. 
	The strategy may also be useful for comparing numerical and continuous Lyapunov exponents in broader classes of dissipative stochastic systems.

	The paper is organized as follows. Section~\ref{Sec:preliminary} collects notation and basic facts, including the stationary measure of the model and background results on Lyapunov exponents. Section~\ref{Sec:attractor} establishes the existence of a random attractor for the continuous RDS. Section~\ref{Sec:numerical attractor} introduces the numerical method and proves the existence and convergence of a random attractor for the discrete RDS. Section~\ref{Sec:numerical lambda} analyzes the numerical Lyapunov exponent for the single mode solution and proves its sign preservation. 
	Section~\ref{Sec:proof} presents the proofs of propositions in Section \ref{Sec:numerical lambda}. 
	Finally, Section~\ref{Sec:experiment} reports numerical experiments.
	
	\section{Preliminary}\label{Sec:preliminary}
	In this paper, we denote by $\|\cdot\|$ the Euclidean norm of vectors in $\mathbb{R}^d$, $d\geq1$, and by $\langle \bm{x},\bm{y} \rangle = \sum_{i=1}^{d}x_iy_i$ the inner product of $\bm{x}=(x_1,x_2, ..., x_d)^{\top}$ and $\bm{y}=(y_1,y_2, ..., y_d)^{\top}$, where $\bm{x}^{\top}$ represents the transpose of a vector $\bm{x}$.  
	The norm of a matrix $Q$ is defined by $\|Q\| := \sqrt{\text{tr}(Q^\top Q)}$, where $\text{tr}(\cdot)$ represents the trace.  
	We recall a fundamental algebraic fact: if a symmetric matrix $Q\in\mathbb{R}^{d\times d}$ is strictly positive definite, then there exists a constant $c>0$ such that $\bm{x}^{\top}Q\bm{x}\geq c\|\bm{x}\|^2$ for all vectors $\bm{x}\in\mathbb{R}^{d}$. 
	We shall frequently use the following elementary estimate: for any $\kappa>0$, if 
	$0<\delta<\sqrt{\kappa}$, then
	\begin{equation}\label{Est:inequality}
		\frac{1}{2} \left(x^2+\kappa v^2\right)\leq \kappa v^2+x^2+\delta vx \leq \frac{3}{2}(x^2+\kappa v^2), \qquad \forall \, v, x \in \mathbb{R}. 
	\end{equation}

	Throughout this paper, we use $C$ to denote an unspecified positive and finite constant, which may vary from one line to another but is always independent of the discretization parameters. 
	Constants depending on
	certain parameters $a,b,...$ are written as $C(a,b,...)$.

	In order to introduce the notion of random attractor, we will briefly review those concepts in the theory of  RDS which are relevant for our case.  
	We refer the reader to \cite{arnoldRDS} for a more detailed and systematic treatment.  
	In the sequel, $\mathbb{T}=\mathbb{R}$ or $\mathbb{Z}$ denotes the time set. An RDS on the state space $\mathbb{R}^{d}$ with time $\mathbb{T}$ consists of the following two components:  
	
	(i) \textit{Model of the noise}: Let $(\Omega,\mathcal{F},\mathbb{P},\theta)$ be a metric dynamical system, i.e., $(\Omega,\mathcal{F},\mathbb{P})$ is a probability space, and $\theta$ is a flow of mappings $\{\theta_{t}\}_{t\in\mathbb{T}}$ on $\Omega$ (i.e., $\theta_{0}=\text{id}_{\Omega}$, $\theta_{t+s} = \theta_{t}\circ\theta_{s}$ for all $t, s\in\mathbb{T}$), which leaves the measure $\mathbb{P}$ invariant.  For simplicity, we assume that $\theta$ is ergodic.

	(ii) \textit{Model of the system perturbed by noise}: Let $\varphi$ be a cocycle over $\theta$, that is,  a measurable mapping: $\mathbb{T}\times\Omega\times\mathbb{R}^{d} \rightarrow \mathbb{R}^{d}$, $(t,\omega,x)\mapsto\varphi(t,\omega,x)$, such that $(t,x)\mapsto\varphi(t,\omega,x)$ is continuous for all $\omega\in\Omega$ and $\varphi$ satisfies the cocycle property: 
	\begin{equation*}
		\varphi(0,\omega,\cdot) = \text{id}_{\mathbb{R}^{d}}(\cdot), \quad \varphi(t+s,\omega,\cdot) = \varphi(t,\theta_{s}\omega,\cdot)\circ\varphi(s,\omega,\cdot), \quad \text{for all} \ t, s \in\mathbb{T} \ \text{and} \ \omega\in\Omega, 
	\end{equation*}
	where $\circ$ means the composition of mappings. 
	Then $\varphi$ is said to be a continuous RDS (resp. a discrete RDS) on the state space $\mathbb{R}^{d}$ over a metric dynamical system $(\Omega,\mathcal{F},\mathbb{P},\theta)$ with time $\mathbb{T}=\mathbb{R}$ (resp. $\mathbb{T}=\mathbb{Z}$). 
	
	Next, we introduce the concept of random attractor.  A random set $D:\Omega\rightarrow\mathcal{B}(\mathbb{R}^d)$ is called tempered if $D(\omega)\subset B_{\rho(\omega)}(0)$ for all $\omega\in\Omega$, where $B_{\rho(\omega)}(0)$ denotes a ball centered at $0\in\mathbb{R}^d$ with radius $\rho(\omega)$ and $\rho$ is a tempered random variable, i.e., $\lim_{t\rightarrow\infty}\frac{1}{t}\log^+\rho(\theta_{-t}\omega)=0$. We denote by $\mathcal{D}$ the set of all compact tempered sets. 
	Let $A\in\mathcal{D}$ be a random compact set satisfying $\varphi(t,\omega,A(\omega))=A(\theta_{t}\omega)$ for all $t>0$ and $\omega\in\Omega$. If for all $D\in\mathcal{D}$,  
	\begin{equation*}
		\lim\limits_{t\rightarrow\infty} d\left(\varphi\left(t,\theta_{-t}\omega,D(\theta_{-t}\omega)\right) , A(\omega)\right) = 0, \quad \text{for all} \ \omega \in \Omega, 
	\end{equation*}
	then $A$ is said to be a random attractor for $\varphi$ (see, e.g., \cite{Caraballo2012,Han2017,Wang2015}). Here $d(B_1,B_2)=\sup\{\inf\{\|x-y\|:y\in B_2\}: x\in B_1\}$ denotes the Hausdorff semi-distance between subsets $B_1$ and $B_2$ in $\mathbb{R}^d$. 
	The random attractor provides the geometric description of the asymptotic regime of RDS as $t\rightarrow\infty$.

	\subsection{Stationary measure for the autoparametric system}
	Since $0<\gamma<1$, the system \eqref{SDE} can be rewritten as  
	\begin{equation}\label{SDE1} 
		\ddot{\ell}(t) = f_1(\ell,\dot{\ell},\vartheta,\dot{\vartheta})  + \frac{\sigma\dot{W}(t)}{1-\gamma\sin^2\vartheta(t)} 
		\qquad \text{and} \qquad 
		\ddot{\vartheta}(t) =f_2(\ell,\dot{\ell},\vartheta,\dot{\vartheta}) + \frac{\sigma \sin \vartheta(t) \, \dot{W}(t)}{1 - \gamma \sin^2 \vartheta(t)},   
	\end{equation}
	where the nonlinear terms $f_1, f_2: \mathbb{R}^4\rightarrow\mathbb{R}$ are given by 
	\begin{equation}\label{nonlinear f}
		\begin{aligned}
			f_{1}(\ell,\dot{\ell},\vartheta,\dot{\vartheta}) &:= \frac{-2\zeta_1 \dot{\ell} - \kappa_{1} \ell + \gamma \dot{\vartheta}^2 \cos \vartheta - 2\gamma\zeta_2 \dot{\vartheta} \sin \vartheta - \gamma \kappa_{2} \sin^2 \vartheta}{1 - \gamma \sin^2 \vartheta}, \\[1mm]
			f_{2}(\ell,\dot{\ell},\vartheta,\dot{\vartheta}) &:= \frac{-2\zeta_2 \dot{\vartheta} - \kappa_{2} \sin \vartheta - 2\zeta_1 \dot{\ell} \sin \vartheta - \kappa_{1} \ell \sin \vartheta + \gamma \dot{\vartheta}^2 \sin \vartheta \cos \vartheta}{1 - \gamma \sin^2 \vartheta}. 
		\end{aligned}
	\end{equation} 
	
	The second order system \eqref{SDE1} can be reformulated as the first-order system of SDEs for the process $(\ell, \dot{\ell}, \vartheta, \dot{\vartheta})$ taking values in the space $\mathcal{N} := \mathbb{R}^2 \times \mathbb{R}/(2\pi\mathbb{Z}) \times \mathbb{R}$.  Hereafter, \(\vartheta\) is understood modulo \(2\pi\), and we always take \(\vartheta\in[0,2\pi)\) as the representative.
	We introduce
	\begin{equation*}
		\bm{v}(t) = \left(v_1(t), v_2(t), u_1(t), u_2(t)\right)^{\top} = \big( \ell(t), \dot{\ell}(t), \vartheta(t), \dot{\vartheta}(t) \big)^{\top}, 
	\end{equation*}
	and define two mappings $\bm{f}, \bm{g}: \mathcal{N} \rightarrow \mathbb{R}^4$ as
	\begin{equation}\label{nonlinear fg}
		\bm{f}(\bm{x}) := \left(x_2, \ f_1(\bm{x}), \ x_4, \ f_2(\bm{x})\right)^{\top} 
		\quad \text{and} \quad 
		\bm{g}(\bm{x}) := \frac{\sigma}{1 - \gamma \sin^2 x_3} 
		\left(0, \ 1, \  0, \ \sin x_3 \right)^{\top},  
	\end{equation} 
	where $\bm{x}=(x_1,x_2,x_3,x_4)^{\top}\in\mathcal{N}$ and $f_1$, $f_2$ are given by \eqref{nonlinear f}. 
	Then \eqref{SDE1} gives the following SDE driven by multiplicative noise: 
	\begin{equation}\label{system} 
		\rmd \bm{v}(t) = \bm{f}(\bm{v}(t)) \rmd t + \bm{g}(\bm{v}(t)) \rmd W(t). 
	\end{equation} 
	
	The single mode solution takes values in the invariant subset $\mathcal{N}_0 = \{(v_1, v_2, 0, 0) : v_1, v_2 \in \mathbb{R}\} \subset \mathcal{N}$, and the unstable single mode solution (corresponding to the inverted pendulum configuration) takes values in the invariant subset $\mathcal{N}_\pi = \{(v_1, v_2, \pi, 0) : v_1, v_2 \in \mathbb{R}\} \subset \mathcal{N}$. 
	Let $\nu_0$ and $\nu_\pi$ be the corresponding stationary measures for the process $\bm{v}(t)$ on $\mathcal{N}_0$ and $\mathcal{N}_\pi$, respectively. 
	If $\bm{v}(0) \in \mathcal{N}_0$ (resp. $\bm{v}(0) \in \mathcal{N}_\pi$), then $\bm{v}(t)$ converges to $\nu_0$ (resp. $\nu_{\pi}$) in distribution. 
	One can verify that $\nu_{0}=\tilde{\nu}_{0}\times\delta_{(0,0)}$, where $\tilde{\nu}_0$ is a Gaussian measure with mean $(0,0)^\top$ and covariance matrix $\frac{\sigma^2}{4\zeta_{1}} \left(\genfrac{}{}{0pt}{}{1/\kappa_{1} \ 0}{\ \, 0 \ \ \ 1}\right)$, and $\delta_{*}$ is the Dirac measure. 
	In \cite{Baxendale24SIAM}, the authors conjectured that when the Lyapunov exponent of the single mode solution $\lambda_{0}>0$, there is also a stationary measure $\hat{\nu}$ on $\mathcal{N}\backslash (\mathcal{N}_{0}\cup\mathcal{N}_{\pi})$. 
	However, the rigorous proof of the existence of $\hat{\nu}$ has not yet been established in the literature.
	In this paper, we mainly focus on the dynamics associated with the stationary measure $\nu_{0}$. 
	
	\subsection{Lyapunov exponents}
	Linearizing \eqref{system} along the solution $\bm{v}$ 
	yields the variational equation for $\tilde{\bm{v}} \in \mathbb{R}^{4}$: 
	\begin{equation*}
		\rmd \tilde{\bm{v}}(t)	= \left(D \bm{f}(\bm{v})\right) 
		\tilde{\bm{v}} \, \rmd t + \left(D \bm{g}(\bm{v})\right) 
		\tilde{\bm{v}} \, \rmd W(t), 
	\end{equation*} 
	where $D\bm{f}$ represents the Jacobian matrix of $\bm{f}$. 
	Define the top Lyapunov exponent of the four-dimensional system \eqref{system} by   
	\begin{equation*} 
		\lambda := \lim\limits_{t\rightarrow\infty} \frac{1}{t} \log \|\tilde{\bm{v}}(t)\|. 
	\end{equation*}
	Let $\nu$ be a stationary measure of \eqref{system}. Since $\|D\bm{f}(\bm{x})\| \leq C(\|\bm{x}\|^2+1)$, $\|D\bm{g}\| \leq C$, and $D((D\bm{g})\bm{g}) = 0$, it follows that $\|D\bm{f}(\cdot)\| \in L^1(\nu)$ (see, \cite[Theorem 2.1 (iii)]{Baxendale24SIAM}). By the multiplicative ergodic theorem (see \cite[Theorem 4.2.13]{arnoldRDS}), the Lyapunov exponent $\lambda$ exists for $\mathbb{P}$-a.s. $\omega\in\Omega$, for $\nu$-a.e. $\bm{v}(0)\in \mathbb{R}^4$, and for all $\tilde{\bm{v}}(0)\in \mathbb{R}^4$.    
	
	In what follows, we introduce the Lyapunov exponent $\lambda_{0}$ of the single mode solution.  
	Linearizing \eqref{system} along the single mode solution $(\eta(t), \dot{\eta}(t), 0, 0)$ gives a first order system 
	\begin{equation*}
		\rmd \tilde{\bm{v}}(t)
		= \left(D \bm{f}(\eta,\dot{\eta},0,0)\right) 
		\tilde{\bm{v}} \rmd t 
		+ \left(D \bm{g}(\eta,\dot{\eta},0,0)\right) 
		\tilde{\bm{v}} \rmd W(t), 
	\end{equation*} 
	which, by denoting $(\bar{\ell}, \dot{\bar{\ell}}, \beta, \dot{\beta})^{\top} = \tilde{\bm{v}}$, is equivalent to 
	\begin{equation}\label{variational SDE}
		\begin{aligned}
			\ddot{\bar{\ell}}(t) + 2 \zeta_1\dot{\bar{\ell}}(t) + \kappa_{1} \bar{\ell}(t) &= 0, \\
			\ddot{\beta}(t) + 2 \zeta_2\dot{\beta}(t) + \left( \kappa_{2}+2 \zeta_1\dot{\eta}(t) + \kappa_{1} \eta(t) \right) \beta(t) &= \sigma \beta(t) \dot{W}(t). 
		\end{aligned}
	\end{equation}
	In fact, \eqref{variational SDE} can be obtained alternatively by formally writing $\ell(t)=\eta(t)+\varepsilon\bar{\ell}(t)$ and $\vartheta(t) = 0+\varepsilon\beta(t)$ in \eqref{SDE} and letting $\varepsilon\rightarrow0$. This means that the stability of the single mode solution is determined by the long-time behavior of the linearized process $\{(\bar{\ell}(t),\dot{\bar{\ell}}(t),\beta(t),\dot{\beta}(t))\}_{t\geq0}$. The first equation in \eqref{variational SDE} is damped and unforced, hence $\|(\bar{\ell}(t),\dot{\bar{\ell}}(t))\|\rightarrow0$ as $t\rightarrow\infty$. Therefore, it suffices to consider the parametric excitation of $\{(\beta(t),\dot{\beta}(t))\}_{t\geq0}$ caused by the single mode vibration $\eta(t)$. We consider the long-time growth or decay rate for the process $(\beta,\dot{\beta})$ given by
	\begin{equation}\label{linearization}
		\begin{aligned}
			\ddot{\eta}(t) + 2 \zeta_1\dot{\eta}(t) + \kappa_{1} \eta(t) &= \sigma \dot{W}(t), \\
			\ddot{\beta}(t) + 2 \zeta_2\dot{\beta}(t) + \left( \kappa_{2}+2 \zeta_1\dot{\eta}(t) + \kappa_{1} \eta(t) \right) \beta(t) &= \sigma \beta(t) \dot{W}(t), 
		\end{aligned}
	\end{equation}
	and define the top Lyapunov exponent of the single mode solution by 
	\begin{equation}\label{lambda0}
		\lambda_{0} := \lim\limits_{t\rightarrow\infty} \frac{1}{t} \log \| (\beta(t),\dot{\beta}(t)) \|. 
	\end{equation}
	Recall that $\nu_{0}$ is the stationary measure for the process $\{\bm{v}(t)\}_{t\geq0}$ on $\mathcal{N}_{0}$. 
	The multiplicative ergodic theorem implies that $\lambda_{0}$ exists for $\mathbb{P}$-a.s. $\omega\in\Omega$, for $\nu_{0}$-a.e. $(\eta(0),\dot{\eta}(0),0,0)^{\top} \in \mathcal{N}_{0}$, and for all $(\bar{\ell}(0), \dot{\bar{\ell}}(0), \beta(0), \dot{\beta}(0))^{\top}\in \mathbb{R}^4$. 
	
	We next derive Furstenberg--Khasminskii's formula for $\lambda_{0}$. 
	Let $B(t) = (\beta(t),\dot{\beta}(t))^{\top}$. Then the second equation in \eqref{linearization} can be written as the following two-dimensional linear SDE  
	\begin{align*}
		\rmd B(t) = \begin{pmatrix}
			0 & 1 \\ -\kappa_{2}-2\zeta_1\dot{\eta}(t)-\kappa_{1} \eta(t) & -2\zeta_2 
		\end{pmatrix} B(t) \rmd t + \begin{pmatrix}
			0 & 0 \\ \sigma & 0 
		\end{pmatrix} B(t) \rmd W(t). 
	\end{align*}
	By introducing the polar coordinate $B(t) = \|B(t)\| (\cos\psi(t) , \sin\psi(t))^{\top}$ and applying It\^o's formula, we obtain 
	\begin{equation}\label{xi}
		\begin{aligned}
			\rmd \left(\log \|B(t)\|\right) &= Q(\eta(t),\dot{\eta}(t),\psi(t)) \rmd t + \sigma\sin\psi(t)\cos\psi(t) \rmd W(t), \\
			\rmd \psi(t) &= h(\eta(t),\dot{\eta}(t),\psi(t)) \rmd t + \sigma\cos^2\psi(t) \rmd W(t), 
		\end{aligned}
	\end{equation}
	where, by denoting $\tilde{\mathcal{N}} := \mathbb{R}^2 \times \mathbb{R}/(\pi\mathbb{Z})$, the functions $Q, h: \tilde{\mathcal{N}}\rightarrow\mathbb{R}$ are given by 
	\begin{equation}\label{Qh}
		\begin{aligned}
			Q(\eta,\dot{\eta},\psi) &= \left( 1-\kappa_{2}-2\zeta_1\dot{\eta}-\kappa_{1} \eta \right) \sin\psi\cos\psi - 2\zeta_2 \sin^2\psi + \frac{\sigma^2}{2} \cos^2\psi \cos 2\psi, \\
			h(\eta,\dot{\eta},\psi) &= -1 + \left( 1-\kappa_{2}-2\zeta_1\dot{\eta}-\kappa_{1} \eta \right) \cos^2 \psi - 2\zeta_2\sin\psi \cos\psi - \sigma^2 \sin\psi \cos^3\psi. 
		\end{aligned}
	\end{equation}
	By the definition of $\lambda_{0}$, i.e., \eqref{lambda0}, we have
	\begin{equation*}
		\lambda_{0} 
		= \lim\limits_{t\rightarrow\infty} \frac{1}{t}  \int_{0}^{t}
		Q(\eta(s),\dot{\eta}(s),\psi(s)) \rmd s 
		+ \lim\limits_{t\rightarrow\infty} \frac{1}{t} \int_{0}^{t} \sigma\sin\psi(s)\cos\psi(s) \rmd W(s). 
	\end{equation*}
	By \cite[Proposition 4.1]{Baxendale24SIAM}, the process $\{(\eta(t),\dot{\eta}(t),\psi(t))\}_{t\geq0}$ on $\tilde{\mathcal{N}}$ has a unique stationary measure $\nu$. 
	In addition, $\int_{0}^{t} \sigma\sin\psi(s)\cos\psi(s) \rmd W(s)$ is a continuous martingale with finite quadratic variation, which leads to 
	\begin{equation*}
		\mathbb{P}\left(\frac{1}{t} \int_{0}^{t} \sigma\sin\psi(s)\cos\psi(s) \rmd W(s) \rightarrow 0 \ \text{as} \ t \rightarrow \infty\right)=1. 
	\end{equation*}
	Therefore, the Lyapunov exponent $\lambda_{0}$ exists as an almost-sure limit for all initial conditions $(\eta(0),\dot{\eta}(0),\beta(0),\dot{\beta}(0))$ with $(\beta(0),\dot{\beta}(0))\neq0$ and satisfies the following Furstenberg--Khasminskii's formula: 
	\begin{equation*}
		\lambda_{0} 
		= \int_{\tilde{\mathcal{N}}} Q(\eta,\dot{\eta},\psi) \rmd\nu(\eta,\dot{\eta},\psi) =: \nu(Q). 
	\end{equation*}

	\subsection{Estimate of $\lambda_{0}$}
	In this subsection, we first review some known results regarding $\lambda_{0}$, and then derive an estimate based on the exponential ergodicity of the process $\{(\eta(t),\dot{\eta}(t),\psi(t))\}_{t\geq0}$. 
	
	In \cite{Baxendale24SIAM}, the authors derived asymptotic formulas for $\lambda_{0}$ corresponding to the regimes of small forcing and of small forcing combined with small damping, respectively. 
	In the following, we adapt the corresponding results to the notation and setting of this paper. 
	
	(i) \textit{Small forcing}: By replacing $\sigma\leadsto\varepsilon\sigma$ for small $\varepsilon$, the corresponding Lyapunov exponent $\bar{\lambda}_{0}(\varepsilon)$ satisfies that when $\zeta_{2}^2<\kappa_{2}$, 
	\begin{equation*}
		\bar{\lambda}_{0}(\varepsilon) = -\zeta_{2} +  \frac{2\varepsilon^2\sigma^2(\kappa_{2}-\zeta_{2}^2)}{\big(\kappa_{1}-2\sqrt{\kappa_{2}-\zeta_{2}^2}\big)^2 + 16\zeta_{1}^2(\kappa_{2}-\zeta_{2}^2)} + O(\varepsilon^4) \qquad \text{as } \varepsilon \rightarrow0. 
	\end{equation*}  
	
	(ii) \textit{Small forcing and small damping}: By replacing $\sigma\leadsto\varepsilon\sigma$ and $\zeta_{2}\leadsto\varepsilon^2\zeta_{2}$ for small $\varepsilon$, the corresponding Lyapunov exponent $\hat{\lambda}_{0}(\varepsilon)$ satisfies that 
	\begin{equation}\label{small damping}
		\hat{\lambda}_{0}(\varepsilon) = \varepsilon^2\left(-\zeta_{2} + \frac{2\kappa_{2}\sigma^2}{(\kappa_{1}-4\kappa_{2})^2+16\zeta_{1}^2\kappa_{2}} \right)+ O(\varepsilon^4) \qquad \text{as } \varepsilon \rightarrow0. 
	\end{equation} 
	
	From \eqref{small damping} we know that, in the limit of small forcing and damping, we can take a proper $\zeta_{2}$ such that $\hat{\lambda}_{0}(\varepsilon)>0$. In this case, the single mode solution is unstable, and the authors conjectured that there exists an additional stationary measure on $\mathcal{N}\backslash (\mathcal{N}_{0}\cup\mathcal{N}_{\pi})$ (see \cite[Conjecture 2.1]{Baxendale24SIAM}). 
	Although a rigorous proof is still lacking, one can intuitively conjecture that $\lambda_{0}>0$ implies exponential amplification of small perturbations to the single mode solution, reflecting the extreme sensitivity to initial conditions. This scenario will be illustrated numerically in Section \ref{Sec:experiment}.
	
	We next derive the estimate of $\lambda_{0}$ via the ergodic limit of the process $\{(\eta(t),\dot{\eta}(t),\psi(t))\}_{t\geq0}$. 
	Let $\mathcal{L}_{\eta\psi}$ be the infinitesimal generator of the process $\{(\eta(t),\dot{\eta}(t),\psi(t))\}_{t\geq0}$: 
	\begin{equation*}
		\mathcal{L}_{\eta\psi} := \dot{\eta} \partial_{\eta} - \left(\kappa_{1}\eta+2\zeta_1\dot{\eta}\right) \partial_{\dot{\eta}} + h(\eta,\dot{\eta},\psi) \partial_{\psi} + \frac{\sigma^2}{2} \partial_{\dot{\eta}\dot{\eta}} + \frac{\sigma^2\cos^4\psi}{2} \partial_{\psi\psi} + \sigma^2 \cos^2\psi\partial_{\dot{\eta}\psi}, 
	\end{equation*}
	where the function $h$ is given by \eqref{Qh}. 
	Define the Lyapunov function $\Gamma_{1}:\tilde{\mathcal{N}}\rightarrow[1,\infty)$ as 
	\begin{equation*}
		\Gamma_{1}(\eta,\dot{\eta},\psi) := \kappa_{1}\eta^2 + \dot{\eta}^2 + \psi^2 + \delta_{1}\eta\dot{\eta} + M, \quad \ \text{with } 0<\delta_{1}<  \frac{2\zeta_1\kappa_{1}}{\kappa_{1}+\zeta_1^2}, \ M\geq1, 
	\end{equation*}
	which satisfies  $\lim_{\|(\eta,\dot{\eta},\psi)\|\rightarrow\infty} \Gamma_{1}(\eta,\dot{\eta},\psi) = \infty$. 
	It follows from Young's inequality and \eqref{Est:inequality} that 
	\begin{align*}
		\mathcal{L}_{\eta\psi}\Gamma_{1} 
		&\leq \delta_{1}\dot{\eta}^2 - 4\zeta_1\dot{\eta}^2 
		- \frac{\delta_{1}\kappa_{1}}{2}\eta^2 + \frac{2\delta_{1}\zeta_1^2}{\kappa_{1}} \dot{\eta}^2 
		+ 4\pi (2\zeta_1|\dot{\eta}|+\kappa_{1}|\eta|) + C(\zeta_1,\zeta_2,\kappa_{1},\kappa_{2},\sigma)  \\
		&\leq -\frac{\delta_{1}}{4} (\kappa_{1}\eta^2+\dot{\eta}^2+\psi^2) + \delta_{1} \pi^2 + C 
		\leq -\frac{\delta_{1}}{6} \Gamma_{1}(\eta,\dot{\eta},\psi) + C(\delta_{1},M). 
	\end{align*}
	This means that the Lyapunov condition holds for the process $\{(\eta(t),\dot{\eta}(t),\psi(t))\}_{t\geq0}$. 
	Moreover, the minorization condition can be verified (see, e.g., \cite[Section 8]{Baxendale24SIAM}). Consequently, in view of $|Q|\leq \Gamma_{1}$ for some properly chosen $M\geq1$, there exist constants $a\in(0,1)$ and $b\in(0,\infty)$ such that 
	\begin{equation}\label{ergodic lambda0}
		\left| \mathbb{E}\left[Q(\eta(t),\dot{\eta}(t),\psi(t))\right] - \lambda_{0} \right| \leq b a^{t} \, \Gamma_{1}(\eta(0),\dot{\eta}(0),\psi(0)). 
	\end{equation}

	\section{Random attractor for continuous RDS}\label{Sec:attractor} 
	In this section, we show that the SDE \eqref{system} generates a continuous RDS possessing a random attractor. To this end, we consider a two-sided Brownian motion $\{W(t)\}_{t\in\mathbb{R}}$ and adopt the canonical Wiener space as the underlying metric dynamical system. Specifically, $\Omega = C_{0}(\mathbb{R}; \mathbb{R})$ is the space of all continuous functions $\omega: \mathbb{R} \rightarrow \mathbb{R}$ satisfying $\omega(0)=0$ endowed with the compact-open topology, $\mathcal{F}$ is the Borel $\sigma$-algebra of $\Omega$, and $\mathbb{P}$ is the Wiener measure. Define the shift mappings on $\Omega$ as $\theta_t \omega(\cdot) := \omega(t+\cdot) - \omega(t)$. Then $(\Omega,\mathcal{F},\mathbb{P},\theta)$ is an ergodic metric dynamical system that drives the SDE \eqref{system} with $W(t,\omega)=\omega(t)$. See \cite[Appendix A]{arnoldRDS} for more details. 
	
	Next, we show that the SDE \eqref{system} induces a continuous RDS through its conjugation to a random differential equation via a suitable transformation. 
	To this end, for $\alpha>0$, we introduce a linear equation $\rmd z = -\alpha z \rmd t + \sigma\rmd\omega(t)$, which admits a unique stationary solution given by $z(\theta_{t}\omega) := \sigma\int_{-\infty}^{0}  e^{\alpha s} \rmd (\theta_{t}\omega)(s)$. 
	A direct calculation shows that 
	\begin{equation}\label{OU}
		z(\theta_t \omega) = z(\theta_s \omega) - \alpha \int_{s}^{t} z(\theta_r \omega) \rmd r + \sigma(\omega(t)-\omega(s)), \qquad \forall \, t>s\in\mathbb{R}. 
	\end{equation}
	By \cite[Lemma 2.1]{Duan2003}, there exists a $\{\theta_t\}_{t\in\mathbb{R}}$-invariant subset $\Omega_1 \subset \Omega$ of full measure such that $\lim_{t\rightarrow\pm\infty} |\omega(t)| / |t|=0$ for all $\omega\in\Omega_{1}$. Moreover, for all $\omega \in \Omega_1$, there exists a positive random variable $C(\omega)$ such that 
	\begin{equation}\label{growth OU}
		|z(\theta_t \omega)|^2  \leq C(\omega) (1+|t|), \qquad \forall \, t \in \mathbb{R}. 
	\end{equation}
	By restricting the probability space to $\Omega_1$, we obtain a metric dynamical system $(\Omega_1,\mathcal{F}_1,\mathbb{P}_1,\theta)$, where  $\mathcal{F}_{1} := \{\Omega_1\cap A, A\in \mathcal{F}\}$ and the restricted probability measure $\mathbb{P}_1:=\mathbb{P}|_{\mathcal{F}_1}$. 
	Henceforth, we work on this metric dynamical system and for simplicity of notation, denote it again by $(\Omega,\mathcal{F},\mathbb{P},\theta)$.

	In order to establish the existence of a random attractor, we introduce the process $(v,x,u,y)$ under the transformation $\mathcal{T}:\mathcal{N}\rightarrow\mathcal{N}$, which is defined by 
	\begin{equation}\label{mathcalT}
		(v,x,u,y)^{\top} = \mathcal{T}(v_1,v_2, u_1,u_2) := \left( v_1, \ v_2 - \gamma u_2 \sin u_1, \ u_1, \ u_2 - v_2 \sin u_1 \right)^{\top}, 
	\end{equation}
	where $\bm{v}=(v_1, v_2, u_1, u_2)^{\top}$ satisfies the SDE \eqref{system}.  
	Then applying It\^o's formula to the transformation $\mathcal{T}$ yields the following equivalent SDE with additive noise 
	\begin{equation}\label{system vxuy}
		\left\{	
		\begin{aligned} 
			\rmd v &= \frac{x + \gamma y \sin u}{1 - \gamma \sin^2 u}  \rmd t, \\
			\rmd x &= \left( - \kappa_{1} v -   \frac{2\zeta_1(x + \gamma y\sin u)}{1 - \gamma \sin^2 u}  \right) \rmd t + \sigma \rmd W(t),  \\
			\rmd u &= \frac{x\sin u + y}{1 - \gamma \sin^2 u} \rmd t,   \\
			\rmd y &= \left( -\kappa_{2} \sin u -  \frac{2\zeta_2(x\sin u + y)}{1 - \gamma \sin^2 u} - \frac{(x + \gamma y \sin u)(x\sin u + y) \cos u}{(1 - \gamma \sin^2 u)^2} \right) \rmd t.  
		\end{aligned}
		\right. 
	\end{equation} 
	Define $\tilde{x}(t) := x(t)-z(\theta_{t}\omega)$, which, from \eqref{system vxuy}, satisfies 
	\begin{equation*}
		\rmd \tilde{x} = \left(  - \kappa_{1} v - \frac{2\zeta_1(x + \gamma y \sin u)}{1 - \gamma \sin^2 u} + \alpha  z(\theta_{t}\omega) \right) \rmd t. 
	\end{equation*}
	Hence, this transformation removes the stochastic differential term and converts \eqref{system} into a random differential equation, which provides a convenient framework for the construction of a continuous RDS and the analysis of its long-time dynamics. 
	
	\begin{thm}\label{th:attractor}
		For all $\omega\in\Omega$ and $\bm{x}\in\mathbb{R}^{4}$, the SDE \eqref{system} admits a unique	solution. The associated solution mapping $\varphi$ generates a continuous RDS. Moreover, $\varphi$ possesses a random attractor. 
	\end{thm}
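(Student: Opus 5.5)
The plan is to work entirely with the transformed additive-noise system \eqref{system vxuy} and its random-ODE version obtained by writing $\tilde x = x - z(\theta_t\omega)$. First I will check that $\mathcal{T}$ in \eqref{mathcalT} is a smooth diffeomorphism of $\mathcal{N}$. Its inverse is
$$v_2=\frac{x+\gamma y\sin u}{1-\gamma\sin^2u}, \qquad u_2=\frac{x\sin u+y}{1-\gamma\sin^2u},$$
which is well defined because $0<\gamma<1$. With this, the random ODE for $(v,\tilde x,u,y)$ has coefficients that are continuous in $t$ (through $z(\theta_t\omega)$) and locally Lipschitz in the state. Standard ODE theory then gives a unique local solution for every $\omega$ and every initial datum. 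Global existence will follow from the a priori bound in the second step. Setting
$$\varphi(t,\omega,\bm x) := \mathcal{T}^{-1}\bigl(v,\ \tilde x + z(\theta_t\omega),\ u,\ y\bigr)(t),$$
where the random ODE is started from $\mathcal T\bm x - (0,z(\omega),0,0)$, I obtain a solution of \eqref{system}. This uses \eqref{OU} and Itô's formula for $\mathcal T^{-1}$, and pathwise uniqueness carries over. The cocycle property follows from uniqueness of the random ODE together with the stationarity $z(\theta_{t+s}\omega)=z(\theta_t\theta_s\omega)$. Measurability and continuity in $(t,\bm x)$ follow from continuous dependence on initial data.

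The core step is a pullback dissipation estimate. For this I will use the mechanical energy written in the new variables:
$$E = \frac{x^2 + 2\gamma x y\sin u + \gamma y^2}{2(1-\gamma\sin^2u)} + \frac{\kappa_1}{2}v^2 + \gamma\kappa_2(1-\cos u),$$
possibly with the weights on $y$ rescaled by $\gamma$ as needed so that it matches the true kinetic energy $\tfrac12(v_2^2 + \gamma u_2^2) - \gamma v_2u_2\sin u$ up to constants. I will then add a small cross term $\delta v x$ and define $V = E + \delta v x$. Along the deterministic part of \eqref{system vxuy}, the gyroscopic term, i.e. the quartic term $(x+\gamma y\sin u)(x\sin u+y)\cos u/(1-\gamma\sin^2u)^2$ in $\rmd y$, should cancel exactly against the $u$-derivative of the kinetic quadratic form. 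What remains is
$$\frac{\rmd E}{\rmd t} = -2\zeta_1 v_2^2 - 2\gamma\zeta_2 u_2^2,$$
which controls $x^2+y^2$ because $(v_2,u_2)\mapsto(x,y)$ is uniformly invertible. The cross term $\delta v x$ supplies $-\delta\kappa_1 v^2$, by the same mechanism as in \eqref{Est:inequality}. Since $u$ lives on the circle, the potential term is bounded.

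Next I substitute $x=\tilde x+z(\theta_t\omega)$ and use Young's inequality. The goal is a bound of the form
$$\frac{\rmd}{\rmd t}\tilde V \le -c\,\tilde V + C\bigl(1+|z(\theta_t\omega)|^2\bigr),$$
where $\tilde V$ is $V$ evaluated at $(v,\tilde x,u,y)$ and is comparable to $v^2+\tilde x^2+y^2+1$. Gronwall's inequality, applied along the pullback orbit starting from $\theta_{-t}\omega$, gives
$$\tilde V(t) \le e^{-ct}\,\tilde V(0) + C\int_{-\infty}^0 e^{cs}\bigl(1+|z(\theta_s\omega)|^2\bigr)\,\rmd s.$$
By \eqref{growth OU}, the integral on the right is a finite random variable $\rho(\omega)$, and it is tempered. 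If $D\in\mathcal D$, its initial data in $D(\theta_{-t}\omega)$ have size at most tempered, so the first term vanishes. Transforming back with $\mathcal T^{-1}$ and adding $z(\omega)$, which is itself tempered, produces the closed random ball $B(\omega)$ of radius $2\rho(\omega)+|z(\omega)|^2+1$ in $\mathbb R^2\times\mathbb R/2\pi\mathbb Z\times\mathbb R$. This ball is compact, tempered, and absorbing for $\mathcal D$. The standard existence theorem for random attractors, in the Crauel--Flandoli form (see \cite{Crauel1994,arnoldRDS}), then yields the attractor
$$A(\omega)=\bigcap_{s\ge0}\overline{\bigcup_{t\ge s}\varphi\bigl(t,\theta_{-t}\omega,B(\theta_{-t}\omega)\bigr)}.$$

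I expect the main obstacle to be verifying the exact cancellation of the gyroscopic term in $\rmd E/\rmd t$ in the $(v,x,u,y)$ variables. A secondary difficulty is choosing $\delta$ small enough that $V$ stays uniformly comparable to $v^2+x^2+y^2$ while the extra terms $\delta\, v\, \rmd x/\rmd t$ are absorbed. These extra terms include $-\delta\kappa_1 v^2$, which is good, and a term of size $\delta |v|(|x|+|y|)$ from the damping, which is handled by Young's inequality. If the cancellation fails to be exact, I would go back to the original coordinates and compute the energy there, where it is classical that the Coriolis and centrifugal terms do no work. I would then push the identity forward through $\mathcal T$.
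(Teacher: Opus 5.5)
There is a genuine gap in your dissipation step. The exact cancellation of the gyroscopic term holds for $E$ evaluated at $x$. After the pathwise substitution, however, your Lyapunov function is evaluated at $\tilde x=x-z(\theta_t\omega)$, while the vector field in \eqref{system vxuy} is still evaluated at $x=\tilde x+z$.

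Redo the computation with $s=1-\gamma\sin^2u$:
\begin{itemize}
\item the $u$-derivative of the kinetic part at $\tilde x$ is $\gamma\cos u\,(\tilde x+\gamma y\sin u)(\tilde x\sin u+y)/s^2$;
\item the gyroscopic term in $\rmd y$ contains the factor $(\tilde x+z+\gamma y\sin u)$ instead.
\end{itemize}
The two no longer cancel, and they leave the residual
\[
-\frac{\gamma z\cos u}{s^{3}}\bigl((\tilde x+z)\sin u+y\bigr)\bigl(\tilde x\sin u+y\bigr).
\]
Its leading part, $-\gamma z\cos u\,(\tilde x\sin u+y)^2/s^3$, has size $|z|\,\|(\tilde x,y)\|^2$ and indefinite sign. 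Since $z(\theta_t\omega)$ is unbounded, no use of Young's inequality can absorb $|z|\,\|(\tilde x,y)\|^2$ into $-c\|(v,\tilde x,y)\|^2+C(1+|z|^p)$. So the bound $\frac{\rmd}{\rmd t}\tilde V\le -c\tilde V+C(1+|z|^2)$ you aim for cannot hold uniformly in the state. Your fallback of returning to the original coordinates does not help, because the cancellation there was never the problem.

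What you can actually prove is
\[
\frac{\rmd}{\rmd t}\tilde V\le\bigl(-c+\bar c\,|z(\theta_t\omega)|\bigr)\tilde V+C(1+|z|^4).
\]
The forcing is quartic because of the $z^2(\tilde x\sin u+y)$ piece. The Gronwall factor is then $\exp\bigl(-c(t-s)+\bar c\int_s^t|z(\theta_r\omega)|\,\rmd r\bigr)$, which does not decay pathwise for free.

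The missing idea is to combine the ergodic theorem with the free mean-reversion parameter $\alpha$ of the Ornstein--Uhlenbeck process:
\begin{itemize}
\item ergodicity gives $\frac1t\int_0^t|z(\theta_r\omega)|\,\rmd r\to\mathbb E|z|\le\sigma/\sqrt{2\alpha}$;
\item $c$ and $\bar c$ do not depend on $\alpha$, so choosing $\alpha$ large gives $\bar c(\mathbb E|z|+\varepsilon)<c/2$;
\item this yields pullback decay for $t$ beyond a random time $T_0(\omega)$.
\end{itemize}
The same ergodic control is needed for temperedness of the resulting radius
\[
R(\omega)=C\int_{-\infty}^0\bigl(1+|z(\theta_r\omega)|^4\bigr)e^{cr+\bar c\int_r^0|z(\theta_\varsigma\omega)|\,\rmd\varsigma}\,\rmd r .
\]
For $s<-t$ one bounds $\int_s^{-t}|z|$ by $-(\mathbb E|z|+\varepsilon)(s+t)+2\varepsilon t$ and then lets $\varepsilon\to0$; \eqref{growth OU} alone does not suffice.

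The rest of your outline is fine: the diffeomorphism $\mathcal T$, the conjugacy, the cocycle property, the identification of $E$ with the true kinetic plus potential energy, and the final existence theorem for random attractors.
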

	\begin{proof}
		According to \cite[Chapter 2]{arnoldRDS}, there exists a local RDS generated by $(v, \tilde{x}, u, y)$, which is denoted by $\tilde{\varphi}(t,\omega,\bm{x})$. As will be shown in \eqref{global RDS}, $\tilde{\varphi}(t,\omega,\bm{x})$ exists for all $t\geq0$, thereby forming a global RDS. Let $\bm{z}(\omega) = (0,z(\omega),0,0)^{\top}$ and introduce 
		\begin{equation*}
			\varphi(t,\omega,\bm{x}) = \mathcal{T}^{-1} \big(\tilde{\varphi}\big(t,\omega,\mathcal{T}\bm{x}-\bm{z}(\omega)\big) + \bm{z}(\theta_{t}\omega)\big), 
		\end{equation*}
		where the transformation $\mathcal{T}:\mathcal{N}\rightarrow\mathcal{N}$ is given by \eqref{mathcalT}. 
		Since $\mathcal{T}$ is a diffeomorphism, $\varphi$ and $\tilde{\varphi}$ are conjugate, which means that  $\varphi(t,\omega,\bm{x})$ forms a continuous RDS generated by \eqref{system}. Furthermore, if $\tilde{\varphi}$ possesses a random attractor $\tilde{A}(\omega)$, then $A(\omega) := \mathcal{T}^{-1}(\tilde{A}(\omega)+\bm{z}(\omega))$ is a random attractor for $\varphi$. 
		Therefore, in the remainder of this proof, we aim to demonstrate the existence of a random attractor for $\tilde{\varphi}$ by deriving its \textit{a priori} estimate. 
		
		For a constant $\delta_{2}\in(0,\sqrt{\kappa_{1}})$ chosen later, we define a function $E: \mathbb{R}^4 \rightarrow \mathbb{R}$ as 
		\begin{equation*}
			E := \frac{1}{2(1 - \gamma \sin^2 u)} \left( \tilde{x}^2 + \gamma y^2 + 2\gamma \tilde{x} y \sin u \right) + \frac{1}{2} \kappa_{1} v^2 + \gamma \kappa_{2} (1 - \cos u) + \delta_{2} v\tilde{x}. 
		\end{equation*}
		One can verify that there exist constants $c_{1}, c_{2}>0$ such that 
		\begin{equation}\label{Est:E}
			c_{1} \|(v,\tilde{x},y)\|^2 \leq E(v,\tilde{x},u,y) \leq c_{2}(1+\|(v,\tilde{x},y)\|^2). 
		\end{equation}
		By denoting $s = 1 - \gamma \sin^2 u$, it follows from the chain rule, \eqref{system vxuy}, and Young's inequality that 
		\begin{align*}
			\frac{\rmd E}{\rmd t} 
			&= \frac{1}{s^2} \left( -2\zeta_1 (\tilde{x}+\gamma y\sin u)^2 - 2\zeta_{2}\gamma(\tilde{x}\sin u+y)^2 \right) 
			- \delta_{2} \kappa_{1} v^2 
			+  \frac{\delta_{2}}{s}(\tilde{x}+\gamma y\sin u)(\tilde{x}-2\zeta_{1}v)  \\
			&\quad + \frac{1}{s} \left( \kappa_{1}vz + \alpha  z(\tilde{x}+\gamma y\sin u) + \gamma\kappa_{2}z\sin^2u \right) 
			- \frac{\gamma z\cos u}{s^3} \left( (\tilde{x}+z)\sin u + y \right) \left( \tilde{x}\sin u + y \right)\\
			&\quad - \frac{z}{s^2} \left( 2\zeta_{2}\gamma\sin u (y+\tilde{x}\sin u) + 2\zeta_{1}(\tilde{x}+\gamma y\sin u)\right) 
			+ \alpha  \delta_{2} vz 
			+\frac{\delta_{2}}{s}\left( - 2\zeta_{1}vz + \tilde{x}z  \right) \\
			&\leq \frac{1}{s^2} \left( -2\zeta_1 (\tilde{x}+\gamma y\sin u)^2 - 2\zeta_{2}\gamma(\tilde{x}\sin u+y)^2 \right) 
			- \delta_{2} \kappa_{1} v^2 
			+  \frac{\delta_{2}}{s}(\tilde{x}+\gamma y\sin u)(\tilde{x}-2\zeta_{1}v)  \\
			&\quad + \epsilon (\tilde{x}^2+v^2+y^2) + C(\epsilon,\gamma,\chi,\zeta_{1},\zeta_{2},\delta_{2},\alpha ) (z^4+1)
			+ \frac{2\gamma|z|}{(1-\gamma)^3}(\tilde{x}^2+y^2) \\
			&= (\epsilon- \delta_{2} \kappa_{1}) v^2  
			+ \left( \epsilon + \frac{\delta_{2}}{s} - \frac{2\zeta_1+2\gamma\zeta_2 \sin^2 u}{s^2} \right) \tilde{x}^2 
			+ \left( \epsilon -\frac{2\zeta_1 \gamma^2 \sin^2 u + 2\gamma\zeta_2}{s^2} \right) y^2  \\
			&\quad + \left( -\frac{4\zeta_1 \gamma \sin u + 4\gamma\zeta_2 \sin u}{s^2} + \frac{\delta_{2} \gamma \sin u}{s} \right) \tilde{x} y  
			+ \left( -\frac{2\zeta_1 \delta_{2}}{s} \right) v \tilde{x} + \left( -\frac{2\delta_{2}\zeta_1 \gamma \sin u}{s} \right) v y \\
			&\quad +  C(\epsilon,\gamma,\chi,\zeta_{1},\zeta_{2},\delta_{2},\alpha ) (z^4+1)
			+ \frac{2\gamma|z|}{(1-\gamma)^3}(\tilde{x}^2+y^2) \\
			&=: -b(v,\tilde{x},y) +C(\epsilon,\gamma,\chi,\zeta_{1},\zeta_{2},\delta_{2},\alpha ) (z^4+1)
			+ \frac{2\gamma|z|}{(1-\gamma)^3}(\tilde{x}^2+y^2),
		\end{align*}
		where $b(v,\tilde{x},y)$ represents the quadratic form with respect to $(v,\tilde{x},y)$ and is strictly positive definite for sufficiently small $\delta_{2}$ and $\epsilon$. Thus there exist constants $c_{3}, \hat{c}_{3}>0$ such that  
		\begin{align*}
			\frac{\rmd E}{\rmd t} &\leq \left(-c_{3}+ \frac{2\gamma|z|}{(1-\gamma)^3}\right) \|(v,\tilde{x},y)\|^2 + C(|z|^4+1) 
			\leq \left(-\frac{c_{3}}{c_{2}} + \frac{2\gamma|z|}{c_{1}(1-\gamma)^3}\right) E + \hat{c}_{3} (|z|^4+1), 
		\end{align*}
		where we have used \eqref{Est:E}. 
		By using Gronwall's inequality and denoting $\tilde{c}_{3} = \frac{c_{3}}{c_{2}}$, $\bar{c}_3 = \frac{2\gamma}{c_{1}(1-\gamma)^3}$, we obtain that for any $t>s\in\mathbb{R}$, 
		\begin{equation}\label{global RDS}
			\begin{aligned}
				E\left(v(t),\tilde{x}(t),y(t)\right)
				&\leq E\left(v(s),\tilde{x}(s),y(s)\right) e^{-\tilde{c}_{3}(t-s) + \bar{c}_{3} \int_{s}^{t}|z(\theta_{r}\omega)|\rmd r} \\
				&\quad + \hat{c}_{3}\int_{s}^{t}  \left(|z(\theta_{r}\omega)|^4+1\right) e^{-\tilde{c}_{3}(t-r) + \bar{c}_{3} \int_{r}^{t}|z(\theta_{\varsigma}\omega)|\rmd \varsigma} \rmd r. 
			\end{aligned}
		\end{equation}
		Recall that $\tilde{\varphi}(t,\omega,\bm{x})$ represents the solution $(v,\tilde{x},u,y)$ starting from $\bm{x}$. By using $|u|\leq2\pi$ and \eqref{Est:E}, it follows that $\forall\,t\geq0$, 
		\begin{align*} 
			&c_{1}\|\tilde{\varphi}(t,\theta_{-t}\omega,\bm{x})\|^2 \\ 
			\leq \, & c_{2}(1+\|\bm{x}\|^2 )e^{-\tilde{c}_{3}t +  \bar{c}_{3} \int_{-t}^{0}|z(\theta_{r}\omega)|\rmd r}
			+ \hat{c}_{3}\int_{-t}^{0}  \left(|z(\theta_{r}\omega)|^4+1\right) e^{\tilde{c}_{3}r + \bar{c}_{3} \int_{r}^{0}|z(\theta_{\varsigma}\omega)|\rmd \varsigma} \rmd r + 4\pi^2 c_{1}. 
		\end{align*}
		Since $z(\theta_{t}\omega)$ is ergodic, i.e., $\lim_{t\rightarrow \pm\infty}\frac{1}{t}\int_{0}^{t}|z(\theta_{r}\omega)|\rmd r 
		= \mathbb{E}[|z|]$ (see, e.g., \cite[Lemma 2.1]{Duan2003}), for any $\varepsilon>0$, there exists a finite random variable $T_0 = T_0(\omega,\varepsilon)>0$ such that 
		\begin{equation}\label{ergodic z}
			\left| \frac{1}{t}\int_{0}^{t}|z(\theta_{r}\omega)|\rmd r 
			- \mathbb{E}[|z|] \right| < \varepsilon, \qquad \forall \, |t|>T_0. 
		\end{equation}
		It follows from $\mathbb{E}[|z|]\leq (\mathbb{E}[|z|^2])^{\frac{1}{2}} =  \frac{\sigma}{\sqrt{2\alpha}}$ that when $\varepsilon\leq\frac{\sigma}{\sqrt{2\alpha}}$ and $\alpha \geq \frac{8\sigma^2\bar{c}_{3}^2}{\tilde{c}_{3}^2}$, 
		\begin{align*}
			e^{-\tilde{c}_{3}t + \bar{c}_{3} \int_{-t}^{0}|z(\theta_{r}\omega)|\rmd r} \leq e^{-\tilde{c}_{3}t + \bar{c}_3(\mathbb{E}[|z|]+\varepsilon)t} < e^{-\frac{\tilde{c}_{3}}{2}t}, \qquad \forall \, t>T_0.  
		\end{align*}
		Thus for any tempered set $D\in\mathcal{D}$ that satisfies $D(\omega)\subset B_{\rho_D(\omega)}(0)$ with a tempered random variable $\rho_D$, we know that when $t>T_0$, 
		\begin{equation*}
			\sup_{\bm x\in D(\theta_{-t}\omega)} \frac{c_{2}}{c_1}(1+\|\bm{x}\|^2 )e^{-\tilde{c}_{3}t +  \bar{c}_{3} \int_{-t}^{0}|z(\theta_{r}\omega)|\rmd r} 
			\leq \frac{c_{2}}{c_1}(1+|\rho_D(\theta_{-t}\omega)|^2) e^{-\frac{\tilde{c}_{3}}{2}t}, \qquad \mathbb{P}\text{-a.s.}
		\end{equation*} 
		Since $\rho_D$ is tempered, there exists a $T_1 = T_1(\omega,D)>0$ such that $\log^+\rho_D(\theta_{-t}\omega) \leq \frac{\tilde{c}_3}{8}t$ for all $t>T_1$. 
		It follows that when $t>\max\{T_0,T_1\}$, 
		\begin{equation}\label{absorb time}
			\sup_{\bm x\in D(\theta_{-t}\omega)}
			\frac{c_{2}}{c_1}(1+\|\bm{x}\|^2 )e^{-\tilde{c}_{3}t +  \bar{c}_{3} \int_{-t}^{0}|z(\theta_{r}\omega)|\rmd r}
			\leq 
			\frac{c_2}{c_1}\big(1+e^{\frac{\tilde{c}_{3}}{4}t}\big) e^{-\frac{\tilde{c}_{3}}{2}t}
			\leq \frac{2c_2}{c_1}e^{-\frac{\tilde{c}_{3}}{4}t}. 
		\end{equation}
		Define 
		\begin{equation*}
			K(\omega) := \left\{ \bm{x}\in\mathbb{R}^4: \ \|\bm{x}\|^2 \leq  
			1+ R(\omega)+4\pi^2  \right\}, 
		\end{equation*}
		where $R$ is a finite random variable given by 
		\begin{equation*}
			R(\omega) = \frac{\hat{c}_{3}}{c_1}\int_{-\infty}^{0}  \left(|z(\theta_{r}\omega)|^4+1\right) e^{\tilde{c}_{3}r + \bar{c}_{3} \int_{r}^{0}|z(\theta_{\varsigma}\omega)|\rmd \varsigma} \rmd r. 
		\end{equation*}
		From \eqref{absorb time} one can derive that for a.s. $\omega\in\Omega$ and for any $D\in\mathcal{D}$, there exists a $T_D(\omega)=\max\{T_0,T_1,(4\log^+(2c_2/c_1))/\tilde{c}_3\}$ such that
		\begin{equation*}
			\tilde\varphi\big(t,\theta_{-t}\omega, D(\theta_{-t}\omega)\big)\subset K(\omega),
			\qquad \forall\, t>T_D(\omega). 
		\end{equation*}
		Therefore, it remains to show that $K(\omega)$ is tempered, namely, $\lim_{t\rightarrow\infty}\frac{1}{t}\log^+K(\theta_{-t}\omega)=0$. 
		
		In fact, by the change of variables $s=-t+r$ and $\xi=-t+\varsigma$, 
		\begin{align*}
			R(\theta_{-t}\omega) 
			&= \frac{\hat{c}_{3}}{c_1}\int_{-\infty}^{-t}  \left(|z(\theta_{s}\omega)|^4+1\right) e^{\tilde{c}_{3}(s+t) + \bar{c}_{3} \int_{s}^{-t}|z(\theta_{\xi}\omega)|\rmd \xi} \rmd s. 
		\end{align*}
		Using \eqref{ergodic z}, when $\varepsilon\leq\frac{\sigma}{\sqrt{2\alpha}}$ and $\alpha \geq \frac{8\sigma^2\bar{c}_{3}^2}{\tilde{c}_{3}^2}$, we obtain that for any $t>T_0$ and $s<-t$, 
		\begin{align*}
			\int_{s}^{-t}|z(\theta_{\xi}\omega)|\rmd \xi 
			= \left(\int_{s}^{0} - \int_{-t}^{0}\right) |z(\theta_{\xi}\omega)|\rmd \xi 
			\leq  -(\mathbb{E}[|z|]+\varepsilon)(s+t) + 2\varepsilon t 
			\leq -\frac{\tilde{c}_3}{2\bar{c}_3}(s+t) 
			+ 2\varepsilon t. 
		\end{align*} 
		In view of \eqref{growth OU}, for any $t>T_0$, we have 
		\begin{equation*}
			R(\theta_{-t}\omega) 
			\leq \frac{\hat{c}_{3}}{c_1} \int_{-\infty}^{-t}   C(\omega) (|s|^2+1) e^{\frac{\tilde{c}_{3}}{2}(s+t)+2\varepsilon \bar{c}_3 t} \rmd s 
			\leq C(\omega) e^{2\varepsilon \bar{c}_3 t} (1+t^2), 
		\end{equation*}
		which means that $\lim_{t\to\infty}\frac{1}{t}\log^+ R(\theta_{-t}\omega)\leq 2\varepsilon\bar{c}_3$ holds   $\forall \, \varepsilon\in(0,\frac{\sigma}{\sqrt{2\alpha}})$.   
		Since $\varepsilon$ is arbitrary, we deduce that $R$ is a tempered random variable, and thus $K(\omega)$ is an absorbing set.  
		Consequently, by \cite[Theorem B.2]{Doan2018}, there exists a random attractor $\tilde{A}(\omega)$ for $\tilde{\varphi}$, and accordingly, $A(\omega) = \mathcal{T}^{-1}(\tilde{A}(\omega)+\bm{z}(\omega))$ is a random attractor for the continuous RDS $\varphi$. 
	\end{proof}

	\section{Numerical method and random attractor for discrete RDS}\label{Sec:numerical attractor}
	In this section we construct the numerical discretization of the SDE \eqref{system} that can preserve two key dynamical features, namely the random attractor and the almost-sure stability of the single mode solution,  of the continuous system. 
	
	A key step in the analysis is to establish uniform moment bounds for the numerical solutions. However, the Lyapunov structure of \eqref{system} is difficult to transfer directly to the discrete setting due to the absence of an It\^o's formula at the discrete level. To overcome this difficulty, we introduce the transformed variables $(\bar{v}_1, \bar{v}_2, \bar{u}_1, \bar{u}_2)$ given by 
	\begin{equation*}
		\bar{v}_1=v_1, \quad \bar{v}_2=v_2-\gamma u_2 \sin u_1, \quad \bar{u}_1=u_1, \quad \bar{u}_2 = u_2 \sqrt{1-\gamma\sin^2 u_1}. 
	\end{equation*} 
	Applying It\^o's formula to \eqref{system} and denoting $\bar{s}=(1-\gamma\sin^2 \bar{u}_1)^{-\frac{1}{2}}$, we obtain   
	\begin{equation}\label{system barvxuy}
		\left\{\begin{aligned}
			\rmd \bar{v}_1 &= \left( \bar{v}_2 + \gamma\bar{s} \, \bar{u}_2\sin \bar{u}_1 \right) \rmd t, \\
			\rmd \bar{v}_2 &= -\left( \kappa_{1} \bar{v}_1 + 2\zeta_1 \bar{v}_2 + 2\zeta_1 \gamma \bar{s} \, \bar{u}_2\sin \bar{u}_1 \right) \rmd t + \sigma \rmd W(t), \\
			\rmd \bar{u}_1 &= \bar{s} \, \bar{u}_2 \rmd t, \\
			\rmd \bar{u}_2 &= -2\bar{s}^2 (\zeta_2 + \zeta_1\gamma \sin^2 \bar{u}_1)  \bar{u}_2 \rmd t
			-  \bar{s}(\kappa_{2} + 2\zeta_1 \bar{v}_2 +\kappa_{1} \bar{v}_1) \sin \bar{u}_1 \rmd t
			+ \sigma \bar{s} \, \sin \bar{u}_1 \rmd W(t). 
		\end{aligned}\right.
	\end{equation}  
	
	\subsection{Numerical method}
	Motivated by the discrete Lyapunov structure of \eqref{system barvxuy}, we construct a numerical approximation for this transformed system.
	For the fixed step size $\tau>0$ and grid points $t_{k}=k\tau$, $k\geq0$, we propose the following numerical method: $(\bar{V}_{0}, \bar{\mathcal{V}}_{0}, \bar{U}_{0}, \bar{\mathcal{U}}_{0}) = ( \bar{v}_1(0),  \bar{v}_2(0), \bar{u}_1(0), \bar{u}_2(0) )$, and for $k\geq0$, 
	\begin{equation}\label{BEM} 
		\begin{aligned}
			\bar{V}_{k+1} &= \bar{V}_{k} + \tau \bar{\mathcal{V}}_{k+1} + \tau \gamma\bar{S}_{k}\bar{\mathcal{U}}_{k+1} \sin \bar{U}_{k}, \\
			\bar{\mathcal{V}}_{k+1} &= \bar{\mathcal{V}}_{k} - \tau\kappa_{1} \bar{V}_{k+1} 
			- 2\tau\zeta_1 \bar{\mathcal{V}}_{k+1} - 2\tau\zeta_1 \gamma \bar{S}_{k} \bar{\mathcal{U}}_{k+1} \sin \bar{U}_{k} + \sigma \Delta W_{k}, \\
			\bar{U}_{k+1} &= \bar{U}_{k} + \tau \bar{S}_{k} \bar{\mathcal{U}}_{k+1}, \\
			\bar{\mathcal{U}}_{k+1} &= \bar{\mathcal{U}}_{k} - 2\tau\bar{S}_{k}^2 \left(\zeta_2+\zeta_1 \gamma \sin^2 \bar{U}_{k}\right)\bar{\mathcal{U}}_{k+1}
			- \tau \bar{S}_{k} \left(\kappa_{2} + 2\zeta_1 \bar{\mathcal{V}}_{k+1} + \kappa_{1} \bar{V}_{k+1}\right) \sin \bar{U}_{k} \\
			&\qquad  + \sigma \bar{S}_{k} \sin\bar{U}_{k}\Delta W_{k}, 
		\end{aligned}
	\end{equation}
	where $\bar{S}_{k} = \left(1-\gamma\sin^2 \bar{U}_{k}\right)^{-\frac{1}{2}}$ and $\Delta W_{k} = W(t_{k+1})-W(t_{k})$ for $k\geq0$ represent the Brownian increments. 
	We denote $\bar{\bm{V}}_k = (\bar{V}_{k}, \bar{\mathcal{V}}_{k}, \bar{U}_{k}, \bar{\mathcal{U}}_{k})^{\top}$ and define $\bm{V}_k$ by 
	\begin{equation}\label{Vk}
		\bm{V}_{k} := (V_{k}, \mathcal{V}_{k}, U_{k}, \mathcal{U}_{k})^{\top} := \mathcal{J}(\bar{\bm{V}}_k), 
	\end{equation}
	where the transformation $\mathcal{J}:\mathbb{R}^4\rightarrow\mathbb{R}^4$ is given by  
	\begin{equation}\label{mathcalT tau}
		\mathcal{J}(x_1,x_2,x_3,x_4) = \left(x_1, \  x_2+\gamma x_4\sin x_3(1-\gamma\sin^2x_3)^{-\frac{1}{2}}, \ x_3, \  x_4(1-\gamma\sin^2x_3)^{-\frac{1}{2}} \right)^{\top}. 
	\end{equation} 
	Since $\mathcal{J}$ is well defined and invertible, one can verify that $\bm{V}_k$ satisfies 
	\begin{equation}\label{BEM2}
		\bm{V}_{k+1} = \bm{V}_k + \tau \bm{f}\left(V_{k+1},\mathcal{V}_{k+1},U_{k},\mathcal{U}_{k+1}\right) + \bm{g}(\bm{V}_k) \Delta W_{k} + \bm{R}_{k}, 
	\end{equation} 
	where $\bm{f}$ and $\bm{g}$ are defined by \eqref{nonlinear fg}, and $\|\bm{R}_{k}\|  \leq C\tau (\tau+|\Delta W_{k}|) (\|\bm{V}_k\|^2+1)$, $\forall\,k\geq0$. 
	Therefore, \eqref{BEM2} provides a consistent approximation of the SDE \eqref{system} with the remainder term $ \bm{R}_{k}$ being of higher order. 
	
	\begin{remark}\label{rmk1}
		It can be verified that $\left(\eta_k,\bar{\eta}_k,0,0\right)^{\top}$ is a single mode solution of both \eqref{BEM2} and \eqref{BEM}. 
		Moreover, since the Jacobian matrix  $D\mathcal{J}(\eta_k,\bar{\eta}_k,0,0)=I_{4\times4}$, 
		the linearizations of \eqref{BEM2} and \eqref{BEM} coincide at the single mode solution. Hence, in the subsequent analysis of the numerical Lyapunov exponent, we focus on \eqref{BEM} and its linearization rather than on \eqref{BEM2}. 
		
		In practical computations, the numerical solution $\bar{\bm{V}}_k$ is first obtained from \eqref{BEM}, and the approximation $\bm{V}_k$ of the original SDE \eqref{system} is then recovered through the transformation $\mathcal{J}$ defined in \eqref{Vk}. 
	\end{remark}
	
	We next show that the numerical method \eqref{BEM} is uniquely solvable.  
	For $x\in\mathbb{R}$, by denoting $s=(1-\gamma\sin^2 x)^{-1/2}$ and introducing the  matrix 
	\begin{align*}
		\bm{J}(x) = \begin{pmatrix}
			0 & 1 & 0  & \gamma s \sin x \\[1mm]
			-\kappa_{1} & -2\zeta_1 &  0 & -2\zeta_1\gamma s \sin x \\[1mm]
			0 & 0 & 0 & s \\[1mm]
			-\kappa_{1}s\sin x &  -2\zeta_1 s\sin x & 0 &  -2s^2 (\zeta_2 + \zeta_1 \gamma \sin^2 x) 
		\end{pmatrix}, 
	\end{align*}
	the implicit equation \eqref{BEM} can be written as  
	\begin{equation}\label{compact BEM}
		(I_{4\times4}-\tau \bm{J}(\bar{U}_{k}))\bar{\bm{V}}_{k+1}
		= \bar{\bm{V}}_{k} + \tau \bm{J}_1(\bar{\bm{V}}_{k}) + \bm{J}_2(\bar{\bm{V}}_{k}) \Delta W_{k}, 
	\end{equation}
	where $\bm{J}_1(\bar{\bm{V}}_{k}) = (0,0,0,-\kappa_{2}\bar{S}_{k}\sin \bar{U}_{k} )^{\top}$ and $\bm{J}_2(\bar{\bm{V}}_{k}) = (0,\sigma, 0, \sigma \bar{S}_{k}\sin \bar{U}_{k} )^{\top}$. 
	For any $k\geq0$, the determinant of $I_{4\times4}-\tau \bm{J}(\bar{U}_{k})$ is 
	\begin{equation*}
		\textsf{det}(I_{4\times4}-\tau \bm{J}(\bar{U}_{k}))  = (1+2\tau\zeta_{2}\bar{S}_{k}^2) (1+2\tau\zeta_{1}+\tau^2\kappa_{1})
		+ \gamma\bar{S}_{k}^2\sin^2\bar{U}_k  (2\tau\zeta_1+\tau^2\kappa_{1}) > 0, 
	\end{equation*}
	which yields the unique solvability of the numerical method \eqref{BEM}.

	In the remainder of this subsection, we present the moment boundedness of numerical solutions. 
	For $\delta_{3}\in(0,\sqrt{\kappa_{1}})$, denote 
	\begin{align*}
		&\tilde{\mathbb{V}}_{k} = \frac{1}{2} \left( \bar{\mathcal{V}}_{k} + \sigma \Delta W_{k} \right)^2 + \frac{\gamma}{2} \left( \bar{\mathcal{U}}_{k} + \sigma \bar{S}_{k}\sin \bar{U}_{k} \Delta W_{k} \right)^2 + \frac{\kappa_{1}}{2}  \bar{V}_k^{2} 
		+ \delta_{3} \bar{V}_{k} \left( \bar{\mathcal{V}}_{k} + \sigma \Delta W_{k} \right), \\
		&\mathbb{V}_{k+1} =  \left(\frac{1}{2} \bar{\mathcal{V}}_{k+1}^2 + \frac{\gamma}{2} \bar{\mathcal{U}}_{k+1}^2 + \frac{\kappa_{1}}{2}  \bar{V}_{k+1}^{2} +\delta_{3} \bar{V}_{k+1} \bar{\mathcal{V}}_{k+1}  \right). 
	\end{align*} 
	By substituting \eqref{BEM} into $\mathbb{V}_{k+1}$ and using Taylor's expansions, we obtain
	\begin{align*}
		& \tilde{\mathbb{V}}_{k} - \mathbb{V}_{k+1} \\ 
		\geq \, & (2\tau \zeta_1-\tau\delta_{3}) \bar{\mathcal{V}}_{k+1}^2 + 2\tau\bar{S}_{k}^2(\zeta_2\gamma+\zeta_1\gamma^2\sin^2 \bar{U}_{k}) \bar{\mathcal{U}}_{k+1}^2
		+ \tau\gamma (4\zeta_1-\delta_{3})\bar{S}_{k}\sin \bar{U}_{k} \bar{\mathcal{V}}_{k+1} \bar{\mathcal{U}}_{k+1} 
		\\
		& + \tau \delta_{3}\kappa_{1} \bar{V}_{k+1}^2  + 2\tau\delta_{3}\zeta_1\bar{V}_{k+1}\bar{\mathcal{V}}_{k+1} 
		+ 2\tau\delta_{3}\zeta_1 \gamma\bar{S}_{k}\sin\bar{U}_{k}\bar{V}_{k+1}\bar{\mathcal{U}}_{k+1} + \tau \gamma \kappa_{2}\bar{S}_{k}\sin \bar{U}_{k} \bar{\mathcal{U}}_{k+1} \\
		=: \, & b^\tau(\bar{\mathcal{V}}_{k+1}, \bar{\mathcal{U}}_{k+1}, \bar{V}_{k+1}) + \tau \gamma \kappa_{2}\bar{S}_{k}\sin \bar{U}_{k} \bar{\mathcal{U}}_{k+1}. 
	\end{align*} 
	Since $|\sin \bar{U}_k|\leq 1$ and $1\leq \bar{S}_k\leq (1-\gamma)^{-\frac{1}{2}}$, one can choose $\delta_3$ sufficiently small such that $b^{\tau}$ is a strictly positive definite quadratic form.  
	Thus there exists a constant $c_{4}>0$ such that 
	\begin{equation}\label{mathbbV}
		\tilde{\mathbb{V}}_{k} - \mathbb{V}_{k+1}
		\geq c_{4} \tau  
		\left(\kappa_{1}\bar{V}_{k+1}^2 + \bar{\mathcal{V}}_{k+1}^2 + \gamma\bar{\mathcal{U}}_{k+1}^2 \right) - C\tau
		\geq \frac{2c_{4} \tau}{3} 
		\mathbb{V}_{k+1} - C\tau, 
	\end{equation} 
	where \eqref{Est:inequality} is used. By \eqref{mathbbV}, $|\sin \bar{U}_k|\leq 1$, and $1\leq \bar{S}_k\leq (1-\gamma)^{-\frac{1}{2}}$, we obtain  
	\begin{align*} 
		\left(1+\frac{2c_{4} \tau}{3}\right) \mathbb{E}[\mathbb{V}_{k+1}] 
		&\leq \mathbb{E}[\tilde{\mathbb{V}}_{k}] + C\tau 
		= \mathbb{E}[\mathbb{V}_{k}] + \mathbb{E}\left[ \frac{1}{2}\sigma^2\Delta W_k^2 + \frac{\gamma}{2} \sigma^2\bar{S}_k^2\sin^2\bar{U}_k\Delta W_k^2 \right] + C\tau \\
		&= \mathbb{E}[\mathbb{V}_{k}] 
		+ \frac{1}{2}\sigma^2\tau + \frac{\gamma}{2} \sigma^2\bar{S}_k^2\sin^2\bar{U}_k \, \tau + C\tau
		\leq \mathbb{E}[\mathbb{V}_{k}] + C\tau. 
	\end{align*} 
	It follows that $\sup_{k\geq0}\mathbb{E}[\mathbb{V}_{k}] \leq C$, which, together with $|\bar{U}_{k}|\leq2\pi$ and \eqref{Est:inequality}, leads to $\sup_{k\geq0}\mathbb{E}\left[\|\bar{\bm{V}}_{k}\|^{2}\right] \leq C$.  
	Furthermore, by \eqref{Vk}, we obtain 
	\begin{equation*} 
		\sup\limits_{k\geq0} \mathbb{E}[\|\bm{V}_{k}\|^{2}] = \sup\limits_{k\geq0}\mathbb{E}[\|\mathcal{J}(\bar{\bm{V}}_{k})\|^{2}] 
		\leq C(\gamma) \sup\limits_{k\geq0}\mathbb{E}[\|\bar{\bm{V}}_{k}\|^{2}] 
		\leq C. 
	\end{equation*}

	\subsection{Random attractor for discrete RDS}
	Similar to the continuous setting, we consider a two-sided Brownian motion in order to construct the discrete RDS associated with the numerical method. Recall that the numerical method \eqref{BEM} can be written as 
	\begin{equation*}
		\bar{\bm{V}}_{k+1}
		= \bar{\bm{V}}_k 
		+ \tau \bm{J}(\bar{U}_{k})\bar{\bm{V}}_{k+1}
		+ \tau \bm{J}_1(\bar{\bm{V}}_{k}) + \bm{J}_2(\bar{\bm{V}}_{k}) \Delta W_{k}. 
	\end{equation*}
	For any $\bm{y}=(y_1,y_2,y_3,y_4)^{\top}\in\mathbb{R}^4$, define the mapping $\bm{\Psi}:\Omega\times \mathbb{R}^4\rightarrow \mathbb{R}^4$ by letting $\bm{\Psi}(\omega,\bm{y})$ be the unique solution of the following equation 
	\begin{equation*}
		\bm{\Psi}(\omega,\bm{y}) = \bm{y} + \tau \bm{J}(y_3) \bm{\Psi}(\omega,\bm{y}) 
		+ \tau \bm{J}_{1}(\bm{y})
		+ \bm{J}_2(\bm{y}) \left(\omega(\tau)-\omega(0)\right), \quad \forall \, \omega \in \Omega. 
	\end{equation*}
	Define the mappings $\bar{\varphi}^{\tau}: \mathbb{Z}_0^+\times\Omega\times\mathbb{R}^4\rightarrow\mathbb{R}^4$ as 
	\begin{align*}
		\bar{\varphi}^{\tau}(k,\omega,\bm{y}) := \bm{\Psi}\left(\theta_{(k-1)\tau}\omega,\bar{\varphi}^{\tau}(k-1,\omega,\bm{y})\right), \quad k\geq2
	\end{align*}
	with 
	\begin{align*}
		\bar{\varphi}^{\tau}(0,\omega,\bm{y}) = \bm{y}, \quad
		\bar{\varphi}^{\tau}(1,\omega,\bm{y}) = \bm{\Psi}(\omega,\bm{y}). 
	\end{align*}
	By construction, $\bar{\varphi}^{\tau}(k,\omega,\bm{y})$ coincides with the numerical solution
	$\bar{\bm{V}}_k$ starting from $\bar{\bm{V}}_0=\bm{y}$.
	Moreover, a direct verification shows that $\bar{\varphi}^\tau$ satisfies the cocycle property over
	$(\Omega,\mathcal{F},\mathbb{P},\theta)$. 
	For any $\bm{x}\in\mathbb{R}^4$, we define 
	\begin{equation}\label{conjugate1}
		\varphi^{\tau}(k,\omega,\bm{x}) := \mathcal{J}\left(\bar{\varphi}^{\tau}(k,\omega,\mathcal{J}^{-1}(\bm{x}))\right), \quad k\geq2
	\end{equation}
	with 
	\begin{align*}
		\varphi^{\tau}(0,\omega,\bm{x}) = \bm{x}, \quad
		\varphi^{\tau}(1,\omega,\bm{x}) = \mathcal{J}\left(\bar{\varphi}^{\tau}(1,\omega,\mathcal{J}^{-1}(\bm{x}))\right), 
	\end{align*}
	where $\mathcal{J}$ is given by \eqref{mathcalT tau}. 
	Since $\mathcal{J}$ is a diffeomorphism,  $\varphi^{\tau}$ and $\bar{\varphi}^{\tau}$ are conjugate.  
	Consequently, $\varphi^\tau$ is a discrete RDS on $\mathbb{R}^4$ over $(\Omega,\mathcal F,\mathbb P,\theta)$. 
	
	\begin{remark}
		Although the driving metric dynamical system $\{\theta_t\}_{t\in\mathbb{R}}$ is two-sided, the discrete RDS generated by the numerical approximation is generally defined only for $k\in\mathbb{Z}_0^+$. 
		The reason is that the one-step map $\bm{\Psi}(\omega,\cdot)$ may not be invertible for all $\omega$. 
		Indeed, extending the cocycle to negative times would require the one-step map $\bm{\Psi}(\omega,\cdot)$ to be invertible for all $\omega$ in order to define the backward iterates, which fails in general. 
		
		This absence of a backward-time discrete RDS does not affect the asymptotic analysis in the present work: the random attractors and Lyapunov exponents considered here are determined by the forward-time behavior as $k\rightarrow+\infty$.  The two-sidedness of  $\{\theta_t\}_{t\in\mathbb{R}}$ remains essential for formulating the random attractor in the pullback sense (see the definition of random attractor in Section \ref{Sec:preliminary}). 
		Therefore, while the underlying noise model is two-sided, only a forward discrete RDS is needed in the present analysis.
	\end{remark}
	
	Before proving the existence of the random attractor for the discrete RDS $\varphi^\tau$, we present the following lemma regarding the estimates of $z(\theta_t\omega)$. 
	
	\begin{lemma}\label{le:tempered}
		For any $\delta\in(0,\frac{1}{2})$, there exist two positive random variables $L_z$ and $L_\delta$ such that 
		\begin{equation*}
			|z(\theta_t \omega)| \leq L_z(\omega) (1+|t|^{\frac{1}{2}}) \quad \text{and} \quad 
			|z(\theta_{t+s} \omega)-z(\theta_t \omega)| \leq L_\delta(\theta_t \omega) |s|^{\delta}, \qquad \forall \, t \in \mathbb{R}, s\in[0,1]. 
		\end{equation*}
		In addition, for any $\varepsilon>0$, 
		\begin{equation*}
			\lim\limits_{t\rightarrow\pm\infty} e^{-\varepsilon |t|} \left(L_z(\theta_t \omega)  + L_\delta(\theta_t \omega)\right) = 0.  
		\end{equation*} 
	\end{lemma}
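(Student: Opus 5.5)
The plan is to take $L_z$ and $L_\delta$ as explicit suprema. The first estimate will come from \eqref{growth OU}, the second from the integral identity \eqref{OU}, and the limits from a moment (or log-moment) bound combined with a Borel--Cantelli or ergodic argument along the integer times.

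\emph{Growth bound.} I define
\[
L_z(\omega):=\sup_{t\in\mathbb R}\frac{|z(\theta_t\omega)|}{1+|t|^{1/2}}.
\]
By \eqref{growth OU}, $|z(\theta_t\omega)|\le C(\omega)^{1/2}(1+|t|)^{1/2}\le C(\omega)^{1/2}(1+|t|^{1/2})$, so $L_z(\omega)\le C(\omega)^{1/2}<\infty$ on $\Omega$. For the shifted variable I will use the cocycle identity $z(\theta_r\theta_t\omega)=z(\theta_{r+t}\omega)$ and the subadditivity $|r+t|^{1/2}\le |r|^{1/2}+|t|^{1/2}$. These give
\[
L_z(\theta_t\omega)\le \sup_r L_z(\omega)\,\frac{1+|r|^{1/2}+|t|^{1/2}}{1+|r|^{1/2}}\le L_z(\omega)\big(1+|t|^{1/2}\big).
\]
This grows only polynomially, so $e^{-\varepsilon|t|}L_z(\theta_t\omega)\to0$ as $t\to\pm\infty$.

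\emph{H\"older bound.} From \eqref{OU}, for $s\in[0,1]$,
\[
z(\theta_{t+s}\omega)-z(\theta_t\omega)=-\alpha\int_t^{t+s}z(\theta_r\omega)\,\rmd r+\sigma\,(\theta_t\omega)(s).
\]
Fix $\delta\in(0,\frac12)$. I set
\[
L_\delta(\omega):=\alpha\sup_{r\in[0,1]}|z(\theta_r\omega)|+\sigma\sup_{0\le s<s'\le 1}\frac{|\omega(s')-\omega(s)|}{|s'-s|^\delta}.
\]
Evaluated at $\theta_t\omega$, the first term bounds the drift contribution by $\alpha s\sup_{[t,t+1]}|z|\le \alpha s^\delta\sup_{[t,t+1]}|z|$, since $s\le1$. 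The second term bounds $|(\theta_t\omega)(s)|=|\omega(t+s)-\omega(t)|$ by the H\"older seminorm of $\omega$ on $[t,t+1]$. Finiteness of the second term needs the Kolmogorov--Chentsov theorem. Since the set of paths that are locally $\delta$-H\"older on every compact interval is $\theta$-invariant and of full measure, I will intersect $\Omega$ with it, as was done with $\Omega_1$ before. On this set $L_\delta$ is finite and the claimed inequality holds for all $t\in\mathbb R$ and $s\in[0,1]$.

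\emph{Temperedness of $L_\delta$ (main obstacle).} The drift part of $L_\delta(\theta_t\omega)$ is at most $\alpha L_z(\omega)(2+|t|^{1/2})$ by the first step, so it is harmless. The real issue is the H\"older constant $H(t,\omega)$ of $\omega$ on $[t,t+1]$. For $n\in\mathbb Z$ let $H_n$ be the H\"older constant on $[n,n+2]$, so that $H(t,\omega)\le H_{\lfloor t\rfloor}$. By stationarity of Brownian increments, the $H_n$ are identically distributed. By the Garsia--Rodemich--Rumsey inequality (or Fernique), $\mathbb E[H_0]<\infty$, and in fact all moments are finite. Chebyshev then gives
\[
\sum_n\mathbb P\big(H_n>e^{\varepsilon|n|/2}\big)\le\sum_n \mathbb E[H_0]\,e^{-\varepsilon|n|/2}<\infty,
\]
so by Borel--Cantelli $H_n\le e^{\varepsilon|n|/2}$ for all large $|n|$, almost surely. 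Running this over rational $\varepsilon$ and discarding one more null set, I would check that the exceptional set can be taken $\theta$-invariant, then restrict $\Omega$ again.

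As a fallback, I can instead invoke the standard criterion (cf.\ \cite{arnoldRDS}): if $\mathbb E\sup_{s\in[0,1]}\log^+X(\theta_s\omega)<\infty$ then $X$ is tempered. This criterion applies directly to $L_\delta$, because $\sup_{s\in[0,1]}L_\delta(\theta_s\omega)$ is controlled by $\alpha\sup_{[0,2]}|z|$ plus the H\"older constant on $[0,2]$, both of which are integrable.

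Combining the two contributions gives $e^{-\varepsilon|t|}\big(L_z(\theta_t\omega)+L_\delta(\theta_t\omega)\big)\to0$ as $t\to\pm\infty$.
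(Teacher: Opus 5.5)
Your proposal is correct. The $L_z$ part coincides with the paper's argument. For $L_\delta$ you take a genuinely different route.

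\textbf{The paper's route.} The paper takes $L_\delta$ to be the intrinsic $\delta$-H\"older seminorm of $r\mapsto z(\theta_r\omega)$ on $[0,1]$. It uses L\'evy's modulus of continuity to show that $(u,v)\mapsto (z(\theta_u\omega)-z(\theta_v\omega))/|u-v|^\delta$ is a continuous centered Gaussian element of $C([0,1]^2)$. Fernique's theorem then gives Gaussian tails, and Borel--Cantelli is applied along integer times. Finally, a three-case argument shows $L_\delta(\theta_t\omega)\le L_\delta(\theta_n\omega)+L_\delta(\theta_{n+1}\omega)$ for $t\in[n,n+1]$.

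\textbf{Your route.} You use the integral identity for $z$ to split the increment into two pieces:
\begin{itemize}
\item a drift piece, bounded by $\alpha\sup_{[t,t+1]}|z|$ and hence by the already polynomially growing $L_z$;
\item a pure Brownian piece, controlled by the H\"older seminorm of $\omega$ on $[t,t+1]$.
\end{itemize}
The overlapping windows $[n,n+2]$ make the interpolation step unnecessary. Because the Borel--Cantelli threshold is exponential, a finite first moment plus Markov's inequality suffices; even a finite $\log^+$-moment would do. So Fernique's theorem is not needed. The log-moment temperedness criterion you cite as a fallback is shorter still, and would apply equally well to the paper's $L_\delta$.

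\textbf{What you give up.} Your $L_\delta$ is cruder, since it is not the H\"older seminorm of $z$ itself. This is harmless: the lemma only asserts existence. The later use in the proof of Theorem~\ref{thm:numerical attractor} needs only three things, all of which your $L_\delta$ provides:
\begin{itemize}
\item the increment bound;
\item local boundedness of $s\mapsto L_\delta(\theta_s\omega)$;
\item subexponential growth.
\end{itemize}

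\textbf{Invariance of the exceptional set.} Your worry here is easily settled. The H\"older constant $H(t,\omega)$ of $\omega$ on $[t,t+1]$ satisfies $H(t,\theta_r\omega)=H(t+r,\omega)$. Hence the set where $e^{-\varepsilon|t|}H(t,\omega)\to0$ for every $\varepsilon>0$ is automatically $\theta$-invariant.
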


	\begin{proof}
		Define 
		\begin{equation*}
			L_z(\omega) := \sup_{s\in\mathbb{R}} \frac{|z(\theta_s\omega)|}{\ 1+|s|^{\frac{1}{2}} \ }. 
		\end{equation*}
		By \eqref{growth OU}, $L_z(\omega)<\infty$, $\mathbb{P}$-a.s., and hence $|z(\theta_t\omega)| \leq L_z(\omega) (1+|t|^{\frac{1}{2}})$. Moreover, using the inequality $1+|t+s|^{1/2}\leq(1+|t|^{1/2})(1+|s|^{1/2})$, we obtain that for all $\varepsilon>0$, 
		\begin{equation*}
			e^{-\varepsilon |t|} L_z(\theta_t \omega) 
			\leq \sup_{s\in\mathbb{R}} \frac{C(\omega)e^{-\varepsilon |t|} (1+|t+s|^{\frac{1}{2}})}{1+|s|^{\frac{1}{2}}} 
			\leq C(\omega)e^{-\varepsilon |t|} (1+|t|^{\frac{1}{2}}) 
			\rightarrow 0, \qquad \text{as } \ |t|\rightarrow\infty. 
		\end{equation*}
		
		For any $\delta\in(0,\tfrac12)$, we then define 
		\begin{equation*} 
			L_\delta(\omega):=\sup_{0\leq u<v \leq 1}\frac{|z(\theta_u\omega)-z(\theta_v\omega)|}{|u-v|^\delta}, 
		\end{equation*}
		which leads to  $|z(\theta_{t+s} \omega)-z(\theta_t \omega)| \leq L_\delta(\theta_t \omega) |s|^{\delta}$ for $s\in[0,1]$ where 
		\begin{equation*} 
			L_\delta(\theta_t \omega)=\sup_{0\leq u<v \leq 1}\frac{|z(\theta_{u+t}\omega)-z(\theta_{v+t}\omega)|}{|u-v|^\delta}, \qquad \forall \, t\in\mathbb R. 
		\end{equation*}   
		Introduce $\hat{z}_{u,v} = \frac{z(\theta_u\omega)-z(\theta_v\omega)}{|u-v|^\delta}$ for $u\neq v\in[0,1]$ and $\hat{z}_{u,v}=0$ for $u=v\in[0,1]$. 
		It follows from \eqref{OU} and the following property due to L\'evy (see \cite[Theorem 1.1.1]{Csorgo1981}) \begin{equation}\label{Levy}
			\lim\limits_{u\rightarrow0} \frac{\sup_{0\leq s \leq 1-u} \sup_{0<t\leq u} |\omega(s+t)-\omega(s)|}{\sqrt{2u \log(1/u)}} = 1, \qquad \mathbb{P}\text{-a.s.},
		\end{equation}
		that for any $\delta\in(0,\frac{1}{2})$ and $u_0\in[0,1]$,  
		$\lim_{(u,v)\rightarrow(u_0,u_0)} \hat{z}_{u,v} =0= \hat{z}_{u_0,u_0}$. This, combined with the H\"older continuity of $z$, leads to $\hat{z}_\cdot \in C([0,1]^2)$, $\mathbb{P}$-a.s. Thus $\hat{z}_{u,v}$ is a centered Gaussian element in the separable Banach space $C([0,1]^2)$. 
		Applying Fernique's theorem (see \cite[Theorem 2.7]{Prato2014}), there exists an $a_\delta>0$ such that $\mathbb{E}[\exp(a_\delta L_\delta^2)] = \mathbb{E}[\exp(a_\delta \|\hat{z}\|_{C([0,1]^2)}^2)] < \infty$.  
		By the invariance of $\theta$ with respect to $\mathbb{P}$ and Markov's inequality,  for all $n\in\mathbb{Z}$ and $r>0$, 
		\begin{equation*} 
			\mathbb{P} \big(L_\delta(\theta_n\omega)>r\big)=\mathbb{P}(L_\delta(\omega)>r)\leq  e^{- a_\delta r^2} \mathbb{E}[\exp(a_\delta \|\hat{z}\|_{C([0,1]^2)}^2)] 
			\leq C e^{- a_\delta r^2}, 
		\end{equation*}
		which means that for all $\varepsilon>0$, 
		\begin{equation*} 
			\sum_{n=1}^\infty \mathbb{P}\Big(L_\delta(\theta_n\omega)>e^{\varepsilon n}\Big)
			\leq   \sum_{n=1}^\infty C \exp\left(- a_\delta e^{2\varepsilon n}\right)<\infty. 
		\end{equation*}
		By Borel--Cantelli's lemma, there exists an a.s. finite random integer $N(\omega)$ such that
		$L_\delta(\theta_n\omega)\leq e^{\varepsilon n}$ for all $n\ge N(\omega)$.
		Let $C_{\varepsilon}(\omega)=\max\left\{1,\ \max_{0\le n\le N(\omega)} e^{-\varepsilon n}L_\delta(\theta_n\omega)\right\}$. Then  $L_\delta(\theta_n\omega)\le C_{\varepsilon}(\omega)\,e^{\varepsilon n}$ for all $n\geq 0$. Applying the same argument to $\{\theta_{-n}\}_{n\geq0}$ yields
		\begin{equation}\label{tempered n}
			L_\delta(\theta_n\omega)\le C_{\varepsilon}(\omega)\,e^{\varepsilon |n|},\qquad \forall \, n\in\mathbb Z. 
		\end{equation}
		
		We next extend the argument \eqref{tempered n} from $n \in \mathbb Z$ to $n \in \mathbb R$. 
		We claim that for all $t\in[n,n+1]$, 
		\begin{equation}\label{tilde L}
			L_\delta(\theta_t\omega)\leq L_\delta(\theta_n\omega) + L_\delta(\theta_{n+1}\omega), 
		\end{equation} 
		which yields the temperedness of $L_\delta$, namely, for any $\tilde{\varepsilon}>0$, 
		\begin{equation*} 
			L_\delta(\theta_t\omega)\leq   C_{\tilde{\varepsilon}}(\omega)\,\big(e^{\tilde{\varepsilon} |n|}+e^{\tilde{\varepsilon} |n+1|}\big)
			\leq  2 C_{\tilde{\varepsilon}}(\omega)  e^{\tilde{\varepsilon} |t+1|} 
			\leq (2C_{\tilde{\varepsilon}}(\omega)  e^{\tilde{\varepsilon}}) e^{\tilde{\varepsilon} |t|}, \qquad \forall \, t\in\mathbb{R}. 
		\end{equation*}
		Therefore, it suffices to prove the claim \eqref{tilde L} to obtain the desired conclusion. 
		In fact, for $t\in[n,n+1]$ and $u<v\in[0,1]$, we consider the following three cases: 
		\begin{itemize}
			\item if $u+t\in[n+1,n+2]$, then $v+t\in[n+1,n+2]$, and thus 
			\begin{align*}
				\frac{|z(\theta_{u+t}\omega)-z(\theta_{v+t}\omega)|}{|u-v|^\delta} 
				\leq L_\delta(\theta_{n+1}\omega); 
			\end{align*}
			
			\item if $u+t\in[n,n+1]$ and $v+t\in[n,n+1]$, we have 
			\begin{align*}
				\frac{|z(\theta_{u+t}\omega)-z(\theta_{v+t}\omega)|}{|u-v|^\delta} 
				\leq L_\delta(\theta_{n}\omega); 
			\end{align*}
			
			\item if $u+t\in[n,n+1]$ and $v+t\in[n+1,n+2]$, by the triangle inequality, 
			\begin{align*}
				&|z(\theta_{u+t}\omega)-z(\theta_{v+t}\omega)| 
				\leq |z(\theta_{u+t}\omega) - z(\theta_{n+1}\omega)| + |z(\theta_{n+1}\omega)-z(\theta_{v+t}\omega)| \\
				&\leq L_\delta(\theta_{n}\omega) |n+1-(u+t)|^{\delta} 
				+ L_\delta(\theta_{n+1}\omega) |v+t-(n+1)|^{\delta}. 
			\end{align*}
			Since $v-u=v+t-(n+1) + n+1-(u+t)$, in view of $n+1-(u+t)\in[0,1]$ and $v+t-(n+1)\in[0,1]$, we obtain 
			\begin{equation*}
				n+1-(u+t) \in [0,v-u], \qquad v+t-(n+1) \in[0,v-u], 
			\end{equation*}
			which means that 
			\begin{align*}
				\frac{|z(\theta_{u+t}\omega)-z(\theta_{v+t}\omega)| }{|u-v|^{\delta}} \leq L_\delta(\theta_{n}\omega) 
				+ L_\delta(\theta_{n+1}\omega). 
			\end{align*}
		\end{itemize}
		Combining these three cases yields the claim \eqref{tilde L}, which completes the proof. 
	\end{proof}
	
	We next demonstrate that the discrete RDS $\varphi^{\tau}$ possesses a random attractor. 
	\begin{thm}\label{thm:numerical attractor}
		For sufficiently small step size $\tau>0$, the discrete RDS $\varphi^\tau$ possesses a random attractor $A^\tau(\omega)$. 
	\end{thm}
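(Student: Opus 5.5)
Since $\varphi^\tau$ is conjugate to $\bar\varphi^\tau$ through the diffeomorphism $\mathcal J$, and $\mathcal J$, $\mathcal J^{-1}$ have at most linear growth (because $1\le(1-\gamma\sin^2x_3)^{-1/2}\le(1-\gamma)^{-1/2}$), tempered sets are mapped to tempered sets. It therefore suffices to construct a tempered random absorbing set for $\bar\varphi^\tau$ and then invoke the discrete-time analogue of \cite[Theorem B.2]{Doan2018}. In finite dimensions a closed bounded absorbing set is automatically compact, so asymptotic compactness comes for free. The attractor of $\varphi^\tau$ is then $A^\tau(\omega)=\mathcal J(\bar A^\tau(\omega))$.

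To obtain pathwise (rather than mean-square) dissipativity, I would mimic the continuous proof and subtract a discrete Ornstein--Uhlenbeck-type process. I would set $\tilde{\bar{\mathcal V}}_k:=\bar{\mathcal V}_k-z(\theta_{t_k}\omega)$, which removes the additive increment $\sigma\Delta W_k$ in the second equation of \eqref{BEM}. By \eqref{OU},
\[
\sigma\Delta W_k=z(\theta_{t_{k+1}}\omega)-z(\theta_{t_k}\omega)+\alpha\int_{t_k}^{t_{k+1}}z(\theta_r\omega)\,\rmd r,
\]
so the resulting error terms are controlled by $\tau L_z$ and by $L_\delta\tau^\delta$ from Lemma~\ref{le:tempered}. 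The fourth equation carries the multiplicative term $\sigma\bar S_k\sin\bar U_k\Delta W_k$, which cannot be removed in the same way. However, its coefficient is bounded, and it enters the energy only through $\gamma\bar{\mathcal U}_{k+1}\cdot\sigma\bar S_k\sin\bar U_k\Delta W_k$ and $\Delta W_k^2$.

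I would then repeat the computation behind \eqref{mathbbV} pathwise, now for the functional
\[
\mathbb V_{k+1}=\tfrac12\tilde{\bar{\mathcal V}}_{k+1}^2+\tfrac\gamma2\bar{\mathcal U}_{k+1}^2+\tfrac{\kappa_1}{2}\bar V_{k+1}^2+\delta_3\bar V_{k+1}\tilde{\bar{\mathcal V}}_{k+1}.
\]
The positive definite form $b^\tau$ again yields a gain of $c\tau\,\mathbb V_{k+1}$. The cross terms involving $z$, $\Delta W_k$ and $\kappa_2$ are handled by Young's inequality with a small parameter. The aim is a recursion of the form
\[
(1+c\tau-\tau\bar c|z(\theta_{t_k}\omega)|)\,\mathbb V_{k+1}\le \mathbb V_k+C\big(\tau+\Delta W_k^2+\tau|z(\theta_{t_k}\omega)|^4+L_\delta(\theta_{t_k}\omega)^2\tau^{2\delta}\big).
\]
Here $\Delta W_k^2$ is written as $\tau^{2\delta}$ times a factor bounded by $L$-type quantities; a Lévy-modulus variable built as in Lemma~\ref{le:tempered} plays that role. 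The term $\bar c|z|\,\mathbb V_{k+1}$ is dealt with exactly as in the continuous case: choose $\alpha$ large so that $\mathbb E|z|\le\sigma/\sqrt{2\alpha}$ is small relative to $c$, and use the ergodic average \eqref{ergodic z} sampled on the grid. The grid average differs from the time average by at most $\tau^\delta$ times a Hölder constant, which is tempered by Lemma~\ref{le:tempered}. This is where $\tau$ small is needed, together with the requirement $\tau\bar c|z|<1$ for the step. That requirement is only guaranteed eventually, so I would instead keep the factor in exponential form, $\prod(1+c\tau)^{-1}e^{\bar c\tau|z|}$, which is valid because $\log(1+x)\ge x-x^2/2$ for small $\tau$.

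Iterating the recursion from $\theta_{-n\tau}\omega$ gives
\[
\mathbb V_n\le \mathbb V_0\,e^{-\frac{c}{2}n\tau}+\sum_{j}(\cdots)\,e^{-\frac c2(n-j)\tau}.
\]
The first term vanishes uniformly over $D(\theta_{-n\tau}\omega)$ by temperedness, as in \eqref{absorb time}. The second term defines a random radius $R^\tau(\omega)$ of series form. Its temperedness follows as in Theorem~\ref{th:attractor}, using the sub-exponential growth of $L_z$ and $L_\delta$ from Lemma~\ref{le:tempered}. Adding back the bounded contribution $|\bar U|\le2\pi$ and the term $|z(\omega)|$ yields the absorbing ball $K^\tau(\omega)$.

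The main obstacle I anticipate is the pathwise treatment of the multiplicative noise in the $\bar{\mathcal U}$ equation together with the mismatch between $\bar S_k,\sin\bar U_k$ evaluated at step $k$ and the implicit unknowns at step $k+1$. Specifically, showing that the $\Delta W_k$-driven terms contribute only a small multiple of $\tau\,\mathbb V_{k+1}$ plus a tempered forcing requires bounding $|\Delta W_k|\le L(\theta_{t_k}\omega)\tau^{\delta}$ with $\delta$ close to $1/2$. It then requires absorbing $\tau^{2\delta-1}$-type losses into the dissipation gain $c\tau$. If this does not close directly, I would first try a Doss--Sussmann-type change of variable for $\bar{\mathcal U}$, namely subtracting $\sigma\bar S_k\sin\bar U_k\,z$, at the cost of extra $z$-weighted terms of the same type as in the continuous proof.
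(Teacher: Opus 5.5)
Your overall architecture matches the paper: conjugation through $\mathcal J$, a pathwise energy estimate after subtracting the Ornstein--Uhlenbeck process, a tempered absorbing ball (automatically compact in $\mathbb R^4$), then \cite[Theorem B.2]{Doan2018}. However, your primary route fails at the step you flag yourself. In the $\bar{\mathcal U}$-equation the cross term $\gamma\sigma\bar S_k\sin\bar U_k\,\bar{\mathcal U}_{k+1}\Delta W_k$ is linear in the state, with a coefficient of pathwise size $\tau^{\delta}$, $\delta<\frac12$. Splitting it against the gain $\epsilon\tau\bar{\mathcal U}_{k+1}^2$ leaves $C\epsilon^{-1}\tau^{-1}\Delta W_k^2\sim\tau^{2\delta-1}$ per step, not the $C\Delta W_k^2$ in your displayed recursion. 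A per-step forcing much larger than $\tau$ cannot be absorbed into the dissipation gain $c\tau$. Iterating with weights $(1+c\tau)^{-(n-j)}$ then gives an absorbing radius of order $\tau^{2\delta-2}$ times a tempered factor, and of order $\tau^{-1}$ on average. For each fixed $\tau$ this radius is still finite and tempered, since $\Delta W_j$ grows sublinearly in $|j|$, so bare existence survives. In fact, at fixed $\tau$ the same crude bound would also handle the additive noise, so subtracting $z$ only from $\bar{\mathcal V}$ buys nothing. The absorbing set does blow up as $\tau\to0$, though. The paper needs every $A^\tau(\omega)$ to lie in one $\tau$-independent tempered ball for Proposition~\ref{uppersemicont}, and your route does not give that.

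Your fallback is precisely the paper's proof. The map $\tilde{\mathcal J}$ in \eqref{bar V} sends the scheme's variables to the additive-noise coordinates $(v,x,u,y)$ of \eqref{system vxuy}, and $\bm Z_k$ is then subtracted from $x$. Rewritten in the scheme's variables, the energy \eqref{Ektau} equals $\frac12(\bar{\mathcal V}_k-Z_k)^2+\frac\gamma2(\bar{\mathcal U}_k-\bar S_k\sin\bar U_k\,Z_k)^2+\frac{\kappa_1}2\bar V_k^2+\delta_3\bar V_k(\bar{\mathcal V}_k-Z_k)$. So the shift of $\bar{\mathcal U}$ is $\bar S_k\sin\bar U_k\,z$, with no extra $\sigma$, because $z$ already carries it. With both shifts, $\sigma\Delta W_k$ cancels exactly in both noisy equations, and only $\alpha\tilde z_{k+1}$-terms remain.

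The price is the mismatch terms from $\bar S_{k+1}\sin\bar U_{k+1}\neq\bar S_k\sin\bar U_k$, chiefly $\gamma\bar{\mathcal U}_{k+1}Z_{k+1}(\bar S_{k+1}\sin\bar U_{k+1}-\bar S_k\sin\bar U_k)$. This is where the step-$k$ versus step-$(k+1)$ mismatch and the $|z|$-weighted quadratic term actually arise. Your $\mathbb V$ does not depend on $\bar U$ and produces neither of them. The paper controls this term using $\bar U_{k+1}-\bar U_k=\tau\bar S_k\bar{\mathcal U}_{k+1}$ and the splitting $\tau|Z_{k+1}|\le\int_{t_k}^{t_{k+1}}|z(\theta_s\omega)|\,\rmd s+\epsilon\tau+\epsilon^{-3}\int_{t_k}^{t_{k+1}}|z(\theta_s\omega)-Z_{k+1}|^4\,\rmd s$:
\begin{itemize}
\item the $\int|z|$ part is beaten by the dissipation for large $\alpha$, via \eqref{ergodic z};
\item the remainder $\tau^{-2}\int_{t_k}^{t_{k+1}}|z(\theta_s\omega)-Z_{k+1}|^8\,\rmd s$ is $O(\tau^{8\delta-1})$ times a tempered H\"older constant, hence $O(\tau)$ for $\delta\in(\frac14,\frac12)$.
\end{itemize}
This gives the $\tau$-independent tempered radius $\tilde R(\omega)$. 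You should go straight to the fallback.
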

	\begin{proof}
		For any $k\in\mathbb{Z}$, we denote $Z_{k} = z(\theta_{t_{k}} \omega)$ that satisfies 
		\begin{equation}\label{OU3}
			Z_{k+1} - Z_{k} = -\alpha \int_{t_{k}}^{t_{k+1}} z(\theta_t \omega) \rmd t + \sigma\Delta W_{k} 
			=: - \alpha \tilde{z}_{k+1} + \sigma\Delta W_{k}, 
		\end{equation}
		where $z(\theta_{t} \omega)$ is given by \eqref{OU}. 
		Let $\bm{Z}_{k} = (0,Z_{k},0,0)^{\top}$ and consider the following process 
		\begin{equation}\label{bar V}
			(\tilde{V}_{k}, \tilde{\mathcal{V}}_{k},\tilde{U}_{k}, \tilde{\mathcal{U}}_{k})^{\top} = \tilde{\mathcal{J}}(\bar{\bm{V}}_{k}) - \bm{Z}_{k} = \Big(\bar{V}_{k}, \ \bar{\mathcal{V}}_{k} , \ \bar{U}_{k}, \  \bar{\mathcal{U}}_{k}\sqrt{1-\gamma \sin^2 \bar{U}_{k}} - \bar{\mathcal{V}}_{k}\sin \bar{U}_{k}\Big)^{\top} - \bm{Z}_{k}, 
		\end{equation} 
		where $\bar{\bm{V}}_k$ is the numerical solution from \eqref{BEM}.  
		Since the transformation $\tilde{\mathcal{J}}:\mathbb{R}^4 \rightarrow \mathbb{R}^4$ is a diffeomorphism, the mapping \begin{equation}\label{conjugate2}
			\tilde{\varphi}^{\tau}(k,\omega,\bm{x}) = \tilde{\mathcal{J}} \big(\bar{\varphi}^{\tau}(k,\omega,\tilde{\mathcal{J}}^{-1}(\bm{x}+\bm{Z}_{0}(\omega))) \big) - \bm{Z}_{k}(\omega) 
		\end{equation}
		defines a cocycle conjugate to $\bar{\varphi}^{\tau}$ up to the random shift $\bm{Z}_{k}$, and hence generates a discrete RDS. 
			Furthermore, if $\tilde{\varphi}^{\tau}$ possesses a random attractor, denoted by $\tilde{A}^{\tau}(\omega)$, then $A^{\tau}(\omega) = \mathcal{J} \circ \tilde{\mathcal{J}}^{-1}\big(\tilde{A}^{\tau}(\omega)+\bm{Z}_0(\omega)\big)$ is a random attractor for the discrete RDS $\varphi^{\tau}$ in view of the conjugate relation \eqref{conjugate1} and \eqref{conjugate2}. 
			Hence, in the following, it suffices to prove the existence of a random attractor for the discrete RDS $\tilde{\varphi}^{\tau}$, which can be done by constructing a random absorbing set. 
			
			We first derive the \textit{a priori} estimate for $\tilde{\varphi}^{\tau}$. 
			For a constant $\delta_{3}>0$ to be chosen later, define
			\begin{equation}\label{Ektau}
				E^{\tau}_{k} := \frac{ \tilde{\mathcal{V}}_{k}^2 + \gamma\tilde{\mathcal{U}}_{k}^{2} + 2\gamma\tilde{\mathcal{V}}_{k}\tilde{\mathcal{U}}_{k} \sin \tilde{U}_{k} }{2(1-\gamma \sin^2 \tilde{U}_{k})}  + \frac{\kappa_{1}}{2} \tilde{V}_{k}^2 + \delta_{3} \tilde{V}_{k}\tilde{\mathcal{V}}_{k},
			\end{equation} 
			where $\bar{V}_k$, $\bar{\mathcal{V}}_k$, $\bar{U}_k$, and $\bar{\mathcal{U}}_k$ are determined by \eqref{BEM}. 
			By Appendix \ref{appendix Ektau}, we have  \begin{equation*}  
				E^{\tau}_{k+1} \leq e^{-c_{5}\tau+\bar{c}_{5} \int_{t_{k}}^{t_{k+1}} |z(\theta_s \omega)| \rmd s} E^{\tau}_{k} + \mathcal{G}_{k+1},  
			\end{equation*}
			where $\mathcal{G}_{k+1}$ is defined in \eqref{mathcalG}. 
			It follows from Gronwall's inequality that 
			\begin{equation*}
				E^{\tau}_{k} \leq e^{-c_{5}t_k +   \bar{c}_{5} \int_{0}^{t_{k}} |z(\theta_s \omega)| \rmd s} E^{\tau}_{0} 
				+ \sum_{i=1}^{k}  e^{-c_{5}(t_k-t_i)+ \bar{c}_{5} \int_{t_{i}}^{t_{k}} |z(\theta_s \omega)| \rmd s} \mathcal{G}_{i}. 
			\end{equation*}
			Similar to \eqref{Est:E}, we have $c_{1} \big(\tilde{V}_{k}^2 + \tilde{\mathcal{V}}_{k}^2 + \tilde{\mathcal{U}}_{k}^2\big) \leq E^{\tau}_{k} \leq c_{2}\big(1+\tilde{V}_{k}^2 + \tilde{\mathcal{V}}_{k}^2 + \tilde{\mathcal{U}}_{k}^2\big)$, 
			which, together with $\tilde{U}_k^2\leq4\pi^2$, means that 
			\begin{align*}
				&c_{1}\|\tilde{\varphi}^{\tau}(k,\omega,\bm{x})\|^2 
				\leq E_k^\tau + 4\pi^2 c_1 \\
				&\leq c_2e^{-c_{5}t_k +   \bar{c}_{5} \int_{0}^{t_{k}} |z(\theta_s \omega)| \rmd s} (1+\|\bm{x}\|^2)
				+ \sum_{i=1}^{k}  e^{-c_{5}(t_k-t_i) + \bar{c}_{5} \int_{t_{i}}^{t_{k}} |z(\theta_s \omega)| \rmd s} \mathcal{G}_{i} + 4\pi^2 c_1. 
			\end{align*}
			Recall that $\tilde{\varphi}^{\tau}(k,\theta_{-t_k}\omega,\bm{x})$ represents the process $\big(\tilde{V}_{0}, \tilde{\mathcal{V}}_{0},  \tilde{U}_{0}, \tilde{\mathcal{U}}_{0}\big)^{\top}$ obtained by evolving the system from $-k$ to $0$ with the initial condition $\bm{x} \in \mathbb{R}^4$. It follows that 
			\begin{equation*}
				\|\tilde{\varphi}^{\tau}(k,\theta_{-t_k}\omega,\bm{x})\|^2 
				\leq  \frac{c_2}{c_{1}} e^{-c_{5}t_k + \bar{c}_{5} \int_{-t_{k}}^{0} |z(\theta_s \omega)| \rmd s} (1+\|\bm{x}\|^2)
				+ \frac{1}{c_{1}} \sum_{i=-k}^{0}  e^{c_{5}t_i + \bar{c}_{5} \int_{t_i}^{0} |z(\theta_s \omega)| \rmd s} \mathcal{G}_{i} + 4\pi^2. 
			\end{equation*}  
			
			In order to estimate $\mathcal{G}_{i}$ defined in \eqref{mathcalG}, by Lemma \ref{le:tempered} and \eqref{OU3}, namely, $\sigma\Delta W_{i-1} = z(\theta_{t_{i}} \omega) - z(\theta_{t_{i-1}} \omega) + \alpha \int_{t_{i-1}}^{t_{i}} z(\theta_s \omega) \rmd s$, we obtain 
			\begin{align*}
				\mathcal{G}_{i} 
				&\leq C \tau \left(1+|z(\theta_{t_{i}} \omega)|^4 + |z(\theta_{t_{i-1}} \omega)|^4 \right) + C\int_{t_{i-1}}^{t_{i}} |z(\theta_s \omega)|^4 \rmd s 
				+ \frac{C}{\tau^2} \int_{t_{i-1}}^{t_{i}} |z(\theta_s \omega)-z(\theta_{t_{i}} \omega)|^8 \rmd s 
				\\
				& \leq CL^4_z(\omega) \, \tau(1+|t_i|^2) 
				+ \frac{C}{\tau^2} \int_{t_{i-1}}^{t_{i}} |z(\theta_s \omega)-z(\theta_{t_{i}} \omega)|^8 \rmd s, 
			\end{align*}
			which, together with Lemma \ref{le:tempered}, means that  when $\tau<1$, 
			\begin{align*}
				&\frac{1}{c_1}\sum_{i=-k}^{0}  e^{c_{5}t_i +   \bar{c}_{5} \int_{t_i}^{0} |z(\theta_s \omega)| \rmd s} \mathcal{G}_{i} \\
				\leq \,& \frac{\tau}{c_1} \sum_{i=-k}^{0}  e^{c_{5}t_i+   \bar{c}_{5} \int_{t_i}^{0} |z(\theta_s \omega)| \rmd s} \left( CL^4_z(\omega) (1+|t_i|^2) + \frac{C}{\tau^3} \int_{t_{i-1}}^{t_{i}} |z(\theta_s \omega)-z(\theta_{t_{i}} \omega)|^8 \rmd s \right) \\ 
				= \,& \frac{1}{c_1}\sum_{i=-k}^{0} \int_{t_{i-2}}^{t_{i-1}}  e^{c_{5}t_i + \bar{c}_{5} \int_{t_i}^{0} |z(\theta_s \omega)| \rmd s} \left( CL^4_z(\omega) (1+|t_i|^2) +  \frac{C}{\tau^3} \int_{t_{i-1}}^{t_{i}} |z(\theta_s \omega)-z(\theta_{t_{i}} \omega)|^8 \rmd s \right) \rmd r \\ 
				\leq \,& C\sum_{i=-k}^{0} \int_{t_{i-2}}^{t_{i-1}}  e^{c_{5}(r+2\tau) + \bar{c}_{5} \int_{r}^{0} |z(\theta_s \omega)| \rmd s} \Bigg( L_z^4(\omega)(1+|r+2\tau|^2) \\
				&\qquad \qquad \qquad +  \frac{C}{\tau^3} \int_{t_{i-1}}^{t_{i}} |z(\theta_s \omega)-z(\theta_{r} \omega)|^8 \rmd s 
				+  \frac{C}{\tau^3} \int_{t_{i-1}}^{t_{i}} |z(\theta_r \omega)-z(\theta_{t_{i}} \omega)|^8 \rmd s \Bigg) \rmd r \\ 
				\leq \,& Ce^{2c_5\tau} \sum_{i=-k}^{0} \int_{t_{i-2}}^{t_{i-1}}  e^{c_{5}r + \bar{c}_{5} \int_{r}^{0} |z(\theta_s \omega)| \rmd s} \left( 2L_z^4(\omega)(1+r^2+4\tau^2) + 2C L_{\delta}^8(\theta_r \omega) \tau^{8\delta-2} \right)  \rmd r \\
				\leq \,& C\int_{-\infty}^{0} e^{c_{5}r + \bar{c}_{5} \int_{r}^{0} |z(\theta_s \omega)| \rmd s} \left( L_z^4(\omega)(1+r^2) + L_\delta^8(\theta_r\omega) \right) \rmd r, 
			\end{align*}
			where in the last step, we choose $\delta\in(\frac{1}{4},\frac{1}{2})$.
			Define 
			\begin{equation*}
				\tilde{R}(\omega) := C\int_{-\infty}^{0} e^{c_{5}r + \bar{c}_{5} \int_{r}^{0} |z(\theta_s \omega)| \rmd s} \left( L_z^4(\omega)(1+r^2) + L_\delta^8(\theta_r\omega) \right) \rmd r, 
			\end{equation*}
			which is independent of $\tau$. One can show that $\tilde{R}$ is a finite and tempered random variable.  In fact, by using \eqref{ergodic z} and Lemma \ref{le:tempered}, for $t>T_0$ and sufficiently small $\varepsilon$, when $\alpha \geq \frac{8\sigma^2\bar{c}_{5}^2}{c_{5}^2}$, 
			\begin{align*}
				&\tilde{R}(\theta_{-t}\omega) 
				= C \int_{-\infty}^{-t} e^{c_{5}(t+s) + \bar{c}_{5} \left(\int_{s}^{0}-\int_{-t}^{0}\right) |z(\theta_\xi \omega)| \rmd \xi} \left( L_z^4(\theta_{-t}\omega)(1+|t+s|^2) + L_\delta^8(\theta_s\omega) \right) \rmd s \\
				&\leq C \int_{-\infty}^{-t} e^{\frac{c_{5}}{2}(t+s) + 2\bar{c}_{5}\varepsilon t} \left( L_z^4(\theta_{-t}\omega)(1+|t+s|^2) + L_\delta^8(\theta_s\omega) \right) \rmd s \\
				&\leq C(\omega) \int_{0}^{\infty} e^{-\frac{c_{5}}{2}r + 2\bar{c}_{5}\varepsilon t} \left( e^{4\varepsilon t}(1+r^2) + e^{8\varepsilon|t+r|} \right)  \rmd r \\
				&\leq C(\omega) e^{2\bar{c}_{5}\varepsilon t} \left( e^{4\varepsilon t} +  e^{8\varepsilon t} \right) 
				\leq 2C(\omega) e^{2\bar{c}_{5}\varepsilon t}   e^{8\varepsilon t}, 
			\end{align*}  
			which leads to 
			\begin{align*}
				\lim_{t\to\infty}\frac{1}{t}\log^+ \tilde{R}(\theta_{-t}\omega) 
				\leq \lim_{t\to\infty}\frac{1}{t} \left(\log 2C(\omega) + \varepsilon t(2\bar{c}_{5}+8) \right) 
				= \varepsilon (2\bar{c}_{5}+8). 
			\end{align*}
			Since $\varepsilon$ is arbitrary, it follows that $\tilde{R}$ is a tempered random variable. 
			In addition, for $T_0(\omega)$ determined by \eqref{ergodic z}, 
			\begin{align*}
				&\tilde{R}(\omega) =C \left(\int_{-\infty}^{-T_0} + \int_{-T_0}^{0} \right) e^{c_{5}s + \bar{c}_{5} \int_{s}^{0} |z(\theta_\xi \omega)| \rmd \xi} \left( L_z^4(\omega)(1+|s|^2) +L_\delta^8(\theta_s \omega)  \right) \rmd s \\  
				&\leq C(\omega) \int_{-\infty}^{-T_0} e^{\frac{c_{5}}{2}s} (1+|s|^2+e^{-8\varepsilon s}) \rmd s 
				+ C(\omega)\int_{-T_0}^{0} e^{\bar{c}_{5} \int_{s}^{0} |z(\theta_\xi \omega)| \rmd \xi}  (1+|s|^2+e^{8\varepsilon|s|}) \rmd s \\
				&\leq C(\omega,T_0(\omega)), 
			\end{align*}  
			which means that $\tilde{R}$ is a finite random variable. 
			
			Finally, we construct the absorbing set for $\tilde{\varphi}^{\tau}$. 
			Similar to the proof of Theorem \ref{th:attractor}, 
			for any tempered set $D\in\mathcal{D}$, when $\alpha \geq \frac{8\sigma^2\bar{c}_{5}^2}{c_{5}^2}$ and $t_k>\max\{T_0,T_1\}$, 
			\begin{equation}\label{absorb time2}
				\sup_{\bm x\in D(\theta_{-t_k}\omega)} \frac{c_2}{c_1} e^{-c_{5}t_k +  \bar{c}_{5} \int_{-t_{k}}^{0} |z(\theta_s \omega)| \rmd s} (1+\|\bm{x}\|^2)
				\leq \frac{2c_2}{c_1}e^{-\frac{c_{5}}{4}t_k}. 
			\end{equation}
			Define 
			\begin{equation}\label{mathbbK}
				\mathbb{K}(\omega) := \left\{ \bm{x}\in\mathbb{R}^4: \ \|\bm{x}\|^2 \leq  
				1+ \tilde{R}(\omega)+4\pi^2 \right\}. 
			\end{equation} 
			Then by \eqref{absorb time2}, for any $D\in\mathcal{D}$ and a.s. $\omega\in\Omega$, there exists a $\tilde{T}_D(\omega)=\max\{T_0,T_1, (4\log^+(2c_2/c_1))/c_5\}$ such that
			\begin{equation*}
				\tilde{\varphi}^{\tau}(k,\theta_{-t_k}\omega,D(\theta_{-t_k}\omega))
				\subset \mathbb{K}(\omega),
				\qquad \forall\, t_k > \tilde{T}_D(\omega). 
			\end{equation*}
			It follows that $\mathbb{K}(\omega)$ is an absorbing set. Consequently, by \cite[Theorem B.2]{Doan2018}, there exists a random attractor $\tilde{A}^{\tau}(\omega)$ for $\tilde{\varphi}^{\tau}$. 
			Accordingly, $A^{\tau}(\omega) = \mathcal{J} \circ \tilde{\mathcal{J}}^{-1}\big(\tilde{A}^{\tau}(\omega)+\bm{Z}_0\big)$ is a random attractor for the discrete RDS $\varphi^{\tau}$. 
		\end{proof}

		In the following proposition, we present the convergence of $A^{\tau}(\omega)$ under the Hausdorff semi-distance, whose proof follows from a contradiction argument (cf. \cite[Theorem 5.1]{Han2017}, \cite[Proposition 3.4]{SRB2025}).
		\begin{prop}\label{uppersemicont}
			The numerical random attractor $A^{\tau}(\omega)$ converges to the random attractor $A(\omega)$ under the Hausdorff	semi-distance, i.e., 
			\begin{equation}\label{upper sc}
				\lim\limits_{\tau\rightarrow0} d(A^{\tau}(\omega) , A(\omega)) = 0
			\end{equation}
			for $\mathbb{P}$-a.s. $\omega\in\Omega$. 
		\end{prop}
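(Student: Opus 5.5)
We will argue by contradiction, following the standard upper semicontinuity scheme of \cite[Theorem 5.1]{Han2017}. The argument needs three ingredients.

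\emph{(a) A uniform absorbing set.} The bound $\mathbb{K}(\omega)$ in \eqref{mathbbK} is independent of $\tau$ for $\tau<1$, because $\tilde{R}$ does not depend on $\tau$. Hence $A^\tau(\omega)\subset \mathcal{J}\circ\tilde{\mathcal{J}}^{-1}(\mathbb{K}(\omega)+\bm{Z}_0(\omega))=:\mathbb{B}(\omega)$. This is a compact set independent of $\tau$, and it is tempered because $\mathcal{J}\circ\tilde{\mathcal{J}}^{-1}$ grows at most linearly and $Z_0$ is tempered by Lemma~\ref{le:tempered}.

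\emph{(b) Pathwise finite-time convergence.} For every fixed $T>0$ and every compact set of initial data, we need
\begin{equation*}
\sup_{\bm x\in \mathbb{B}(\theta_{-T}\omega)}\big\|\varphi^\tau(k_\tau,\theta_{-t_{k_\tau}}\omega,\bm x)-\varphi(t_{k_\tau},\theta_{-t_{k_\tau}}\omega,\bm x)\big\|\to0
\end{equation*}
as $\tau\to0$, with $t_{k_\tau}\to T$.

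\emph{(c) Invariance and attraction.} We use the invariance $A^\tau(\omega)=\varphi^\tau(k,\theta_{-t_k}\omega,A^\tau(\theta_{-t_k}\omega))$ together with the attraction property of $A$.

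The contradiction step runs as follows. Suppose \eqref{upper sc} fails. Then there are $\epsilon_0>0$, $\tau_n\to0$ and points $x_n\in A^{\tau_n}(\omega)$ with $\mathrm{dist}(x_n,A(\omega))\geq\epsilon_0$. Since $\mathbb{B}$ is tempered, the attraction of $A$ gives a time $T$ such that $d(\varphi(T,\theta_{-T}\omega,\mathbb{B}(\theta_{-T}\omega)),A(\omega))<\epsilon_0/3$. This step only needs absorption, and we may choose $T$ to be approximately a grid time. By invariance, write $x_n=\varphi^{\tau_n}(k_n,\theta_{-t_{k_n}}\omega,y_n)$ with $y_n\in A^{\tau_n}(\theta_{-t_{k_n}}\omega)\subset\mathbb{B}(\theta_{-t_{k_n}}\omega)$ and $t_{k_n}\approx T$. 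We handle the mismatch $t_{k_n}\neq T$ by continuity of $\varphi$ in time and of $\mathbb{B}(\theta_{-t}\omega)$ in $t$. Alternatively, we restrict to $\tau_n=T/m_n$, which does not lose generality once we use a subsequence argument with $T$ fixed first. Then (b) gives $\|x_n-\varphi(T,\theta_{-T}\omega,y_n)\|<\epsilon_0/3$ for large $n$. This yields $\mathrm{dist}(x_n,A(\omega))<2\epsilon_0/3$, a contradiction.

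The main obstacle is (b): a \emph{pathwise}, not mean-square, convergence of the implicit scheme \eqref{BEM} on a finite horizon, uniformly over a compact set of initial data. The plan is to work with the conjugated random equations. On $[-T,0]$ the path $\omega$ is fixed and continuous, and by Lemma~\ref{le:tempered} the processes $z$ and $Z_k$ are H\"older continuous with a random constant $L_\delta$. We compare the scheme written in the $\bar{\bm V}$ variables with the exact transformed solution of \eqref{system barvxuy}. The exact solution is bounded on $[-T,0]$ uniformly over initial data in the compact set, by the a priori estimate \eqref{global RDS}; the numerical solution is bounded by the discrete a priori estimate of Theorem~\ref{thm:numerical attractor}. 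On such bounded sets the coefficients are Lipschitz. We then treat the scheme as a perturbation of a Wong--Zakai/Euler-type discretisation. Since the noise in \eqref{system barvxuy} is multiplicative only through $\sigma\bar{s}\sin\bar u_1$, the pathwise local error involves the terms $\int(\sin\bar u_1(s)-\sin\bar U_k)\,\rmd\omega(s)$. We control these by integrating by parts against the $C^1$-in-time factor $\bar u_1$, whose derivative is bounded. The local error is then $O(\tau^{\delta})$ with a random constant, and the drift-implicit errors are $O(\tau)$. A discrete Gronwall argument over $T/\tau$ steps yields a global error $C(\omega,T,\mathbb{B})\tau^{\delta'}\to0$, uniformly on the compact set. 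Finally, $\mathcal J$ is locally Lipschitz, so the convergence transfers to $\varphi^\tau$ and $\varphi$.
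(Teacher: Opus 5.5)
Your argument follows the paper's proof: contradiction, the $\tau$-independent bound on $A^\tau(\omega)$ inherited from $\mathbb{K}(\omega)$, an attraction time $T$ for that bounded tempered family, pullback invariance of $A^{\tau_n}$, a finite-horizon pathwise error bound, and the triangle inequality. The paper simply asserts your step (b) as a ``standard'' estimate $C(\omega,T_2)\tau^{\delta}$. So what needs checking is the extra detail you supply, and two pieces of it need repair.

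\textbf{The grid mismatch.} You cannot restrict the contradicting sequence to the form $\tau_n=T/m_n$; that sequence is given to you. The fix is that attraction holds for all $t\ge T$. Take $k_n=\lceil T/\tau_n\rceil$, so $t_{k_n}\in[T,T+\tau_n)$. Then ask for (b) uniformly over horizons in $[T,T+1]$ and initial data in the bounded set $\bigcup_{t\in[T,T+1]}\mathbb{B}(\theta_{-t}\omega)$. No continuity of $t\mapsto\mathbb{B}(\theta_{-t}\omega)$ is needed. The paper's ``suppose $N_n\tau_n=T_2$'' skips the same point.

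\textbf{The Gronwall step in (b).} This is the more serious issue: your plan in the $\bar{\bm V}$ variables does not close as described. Put $h(u)=\sin u\,(1-\gamma\sin^2u)^{-1/2}$. The noise mismatch in the last component of \eqref{system barvxuy} versus \eqref{BEM} splits as
$\sigma\int_{t_k}^{t_{k+1}}\big(h(\bar u_1(s))-h(\bar u_1(t_k))\big)\,\rmd\omega(s)+\sigma\big(h(\bar u_1(t_k))-h(\bar U_k)\big)\Delta W_k$.
\begin{itemize}
\item The first piece is a genuine local error, of size $O(\tau^{1+\delta})$ per step. Integration by parts against $\bar u_1$ handles it.
\item The second piece is of size $|\bar u_1(t_k)-\bar U_k|\,|\Delta W_k|$, i.e.\ error times noise increment. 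Integrating by parts inside a single step does not remove it.
\end{itemize}
Fed into a discrete Gronwall inequality, the second piece produces the factor $\exp\big(C\sum_k|\Delta W_k|\big)$. This blows up pathwise as $\tau\to0$, because Brownian paths have infinite variation (typically $\sum_k|\Delta W_k|\approx T\sqrt{2/(\pi\tau)}$).

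There are two ways around this.
\begin{itemize}
\item Sum by parts over $k$. Since $\bar U_{k+1}-\bar U_k=\tau\bar S_k\bar{\mathcal U}_{k+1}$, consecutive differences of $h(\bar u_1(t_k))-h(\bar U_k)$ are $O(\tau\|e\|)+O(\tau^{1+\delta})$.
\item More simply, compare in the conjugated variables \eqref{bar V} against $(v,\tilde x,u,y)$. There the continuous problem is a random ODE, and the $\sigma\Delta W_k$ in $\bar{\mathcal V}_{k+1}$ cancels against $Z_{k+1}-Z_k$ by \eqref{OU3}. The only noise left in the fourth component is $\sigma\Delta W_k\big(\bar S_k\sin\bar U_k/\bar S_{k+1}-\sin\bar U_{k+1}\big)=O(\tau|\Delta W_k|\,|\bar{\mathcal U}_{k+1}|)$, which is a genuine $O(\tau^{1+\delta})$ local term. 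An ordinary Gronwall over $T/\tau$ steps then gives the bound $C(\omega,T)\tau^{\delta}$ that the paper invokes.
\end{itemize}
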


		\begin{proof} 
			Suppose, contrary to the assertion \eqref{upper sc}, that there exist an $\epsilon_0>0$, an $\omega\in\Omega$, a subsequence $\tau_n\rightarrow0$ as $n\rightarrow\infty$, and points $a_n\in A^{\tau_n}(\omega)$ such that
			\begin{equation}\label{contradict}
				d(a_n, A(\omega)) > \epsilon_0. 
			\end{equation} 
			
			From the construction of random attractor, we know that $\tilde{A}^{\tau}(\omega) \subset \mathbb{K}(\omega)$, where $\mathbb{K}(\omega)$ is defined by \eqref{mathbbK}.  
			In view of $A^{\tau}(\omega)=\mathcal{J} \circ \tilde{\mathcal{J}}^{-1}\big(\tilde{A}^{\tau}(\omega)+\bm{Z}_0\big)$ and $\|\mathcal{J} \circ \tilde{\mathcal{J}}^{-1}(\bm{x}+\bm{Z}_0)\|^2 \leq C(\gamma) \|\bm{x}\|^2 + 2\|z(\omega)\|^2$ for all $\bm{x}\in\mathbb{R}^4$, one can verify that $A^{\tau}(\omega)$ is contained in a ball $B_{R^*(\omega)}(0)$ in $\mathbb{R}^4$ with center $0$ and radius $R^*(\omega)=C(\gamma)  
			(1+ \tilde{R}(\omega)+4\pi^2) + 2\|z(\omega)\|^2$ that is tempered and is independent of $\tau$. Thus the ball $B_{R^*(\omega)}(0)\in\mathcal{D}$. By the definition of the random attractor, for any $\omega\in\Omega$, there exists a $T_2 = T_2(\omega,R^*)$ independent of $\tau$ such that
			\begin{equation}\label{attract}
				d\left(\varphi\left(t,\theta_{-t}\omega,B_{R^*(\theta_{-t}\omega)}(0) \right) , A(\omega)\right) < \frac{1}{4} \epsilon_0, \qquad \forall \, t \geq T_2. 
			\end{equation}

			Following a standard derivation,  for sufficiently small $\tau$, the global discretisation error on an interval of length $T_2>0$ for the driving system starting at $\bm{x}\in B_{R^*(\theta_{-T_2}\omega)}(0)$ is  
			\begin{equation*} 
				\left\| \varphi(t_k,\theta_{-t_k}\omega,\bm{x}) - \varphi_{\tau}(k,\theta_{-t_k}\omega,\bm{x}) \right\| 
				\leq C(\omega,T_2) \tau^{\delta}, \qquad 0\leq k\leq \lfloor T_2 / \tau \rfloor, 
			\end{equation*}
			where $\delta<\frac{1}{2}$ is determined by the H\"older continuity of the underlying Brownian motions.
			This means that for any $\bm{x}\in B_{R^*(\theta_{-T_2}\omega)}(0)$, when $\tau<(\epsilon_0/4C(\omega,T_2))^{1/\delta}$, 
			\begin{equation}\label{path convergence} 
				\|\varphi(t_k,\theta_{-t_k}\omega,\bm{x}) - \varphi_{\tau}(k,\theta_{-t_k}\omega,\bm{x})\| 
				\leq \frac{\epsilon_0}{4}, \qquad 0\leq k\leq \lfloor T_2 / \tau \rfloor. 
			\end{equation} 
			
			Consequently, we pick and fix $\tau_n<(\epsilon_0/4C(\omega,T_2))^{1/\delta}$, and suppose for convenience that $N_n \tau_n = T_2$. By the invariance property of the numerical random attractor $A^{\tau_n}(\omega)$, for any $a_n\in A^{\tau_n}(\omega)$, there exists an $a'_n \in A^{\tau_n}(\theta_{-T_2}\omega) \subset B_{R^*(\theta_{-T_2}\omega)}(0)$ such that $\varphi_{\tau_n}(N_n,\theta_{-T_2}\omega,a'_n)=a_n$. Thus for any $a_n\in A^{\tau_n}(\omega)$, by \eqref{attract} and \eqref{path convergence}, 
			\begin{align*}
				&d(a_n, A(\omega)) 
				= d\left( \varphi_{\tau_n}(N_n,\theta_{-T_2}\omega,a'_n) , A(\omega) \right) \\
				\leq \,& \left\| \varphi(T_2,\theta_{-T_2}\omega,a'_n) - \varphi_{\tau_n}(N_n,\theta_{-T_2}\omega,a'_n) \right\| 
				+ d\left( \varphi(T_2,\theta_{-T_2}\omega,a'_n) , A(\omega) \right) 
				< \frac{\epsilon_0}{2}, 
			\end{align*}
			which contradicts \eqref{contradict}. 
			This completes the proof. 
		\end{proof}

		\section{Numerical Lyapunov exponent for the single mode solution}\label{Sec:numerical lambda}
		In this section, we investigate the numerical Lyapunov exponent $\lambda^{\tau}$ associated with the single mode solution $(\eta_{k}, \bar{\eta}_{k}, 0, 0)$ of \eqref{BEM}. 
		We show that the proposed numerical method \eqref{BEM} can preserve the sign of $\lambda_{0}$, and thus preserve the almost-sure stability of the single mode solution. 
		
		\subsection{Definition of $\lambda^{\tau}$} 
		
		Linearizing \eqref{BEM} along the single mode solution $(\eta_{k}, \bar{\eta}_{k}, 0, 0)$ gives the process $\{(\alpha_k, \bar{\alpha}_k,\beta_k, \bar{\beta}_k)\}_{k\geq0}$ satisfying 
		\begin{equation}\label{linearization1}
			\left\{\begin{aligned}
				\alpha_{k+1} &= \alpha_{k} + \tau \bar{\alpha}_{k+1},  \\
				\bar{\alpha}_{k+1} &= \bar{\alpha}_{k} -\tau \kappa_{1} \alpha_{k+1} - 2\tau\zeta_1 \bar{\alpha}_{k+1}, \\
				\beta_{k+1} &= \beta_{k} + \tau \bar{\beta}_{k+1}, \\
				\bar{\beta}_{k+1} &= \bar{\beta}_{k} - \tau(\kappa_{2} + 2\zeta_1 \bar{\eta}_{k+1} + \kappa_{1} \eta_{k+1}) \beta_{k} 
				- 2\tau\zeta_2 \bar{\beta}_{k+1} + \sigma\beta_{k}\Delta W_{k}. 
			\end{aligned}\right. 
		\end{equation}
		The first two equations in \eqref{linearization1} correspond to an unforced damped oscillator. Since $\kappa_1>0$ and $\zeta_1>0$, its implict Euler discretization is exponentially stable, and hence $\|(\alpha_k, \bar{\alpha}_k)\| \rightarrow 0$ as $k\rightarrow\infty$. 
		The stability of the single mode solution is determined by the long-time behavior of the linearized process $\{(\alpha_k, \bar{\alpha}_k,\beta_k, \bar{\beta}_k)\}_{k\geq0}$.  
		Therefore, the transverse stability is governed by the parametric excitation of $(\beta_k,\bar\beta_k)$ induced by the single mode vibration $(\eta_k,\bar\eta_k)$.
		More precisely, in the sequel, we consider the long-time growth or decay rate of the process $(\beta_k, \bar{\beta}_k)$ satisfying 
		\begin{equation}\label{VXbeta}
			\left\{\begin{aligned}
				\beta_{k+1} &= \beta_{k} + \tau \bar{\beta}_{k+1}, \\
				\bar{\beta}_{k+1} &= \bar{\beta}_{k} - \tau(\kappa_{2} + 2\zeta_1 \bar{\eta}_{k+1} + \kappa_{1} \eta_{k+1}) \beta_{k} 
				- 2\tau\zeta_2 \bar{\beta}_{k+1} + \sigma\beta_{k}\Delta W_{k}, \\
				\eta_{k+1} &= \eta_{k} + \tau \bar{\eta}_{k+1}, \\
				\bar{\eta}_{k+1} &= \bar{\eta}_{k} - \tau \left( \kappa_{1} \eta_{k+1} + 2\zeta_1 \bar{\eta}_{k+1} \right) + \sigma \Delta W_{k}.  
			\end{aligned}\right. 
		\end{equation}
		The implicit system \eqref{VXbeta} is uniquely solvable at each time step: one can first determine $(\eta_{k+1},\bar{\eta}_{k+1})$ from the last two equations, and then substitute them into the first two equations to obtain a linear implicit system with a unique solution for $(\beta_{k+1},\bar\beta_{k+1})$.  
		The stability of the single mode solution is determined by the almost-sure exponential growth rate of \eqref{VXbeta}. 
		For any $(\beta_0,\bar\beta_0)\ne (0,0)$, the numerical Lyapunov exponent is defined by 
		\begin{equation}\label{numerical Lyapunov exponent}
			\lambda^{\tau} := \lim\limits_{k\rightarrow\infty} \frac{1}{k\tau} \log \left\| (\beta_{k}, \bar{\beta}_{k}) \right\|.
		\end{equation}
		From \eqref{compact BEM}, we obtain the variational equation 
			\begin{equation*} 
				\delta\bar{\bm{V}}_{k+1}
				= \delta\bar{\bm{V}}_{k} + \tau \bm{J}(\bar{U}_{k}) \, \delta\bar{\bm{V}}_{k+1} 
				+ \left( \tau D_{\bar{\bm{V}}_{k}} (\bm{J}(\bar{U}_{k})\bar{\bm{V}}_{k+1}) + \tau D\bm{J}_1(\bar{\bm{V}}_{k}) + D \bm{J}_2(\bar{\bm{V}}_{k}) \Delta W_{k} \right) \delta\bar{\bm{V}}_{k}. 
		\end{equation*}
	In view of the invertibility of the matrix $I_{4\times4}-\tau \bm{J}(\bar{U}_{k})$, we have 
	 \begin{align*} 
		&\left\| (I_{4\times4}-\tau \bm{J}(\bar{U}_{k}))^{-1}  \left( I_{4\times4} + \tau D_{\bar{\bm{V}}_{k}} (\bm{J}(\bar{U}_{k})\bar{\bm{V}}_{k+1}) + \tau D\bm{J}_1(\bar{\bm{V}}_{k}) + D \bm{J}_2(\bar{\bm{V}}_{k}) \Delta W_{k} \right) \right\| \\
		&\leq C\left( \|\bar{\bm{V}}_{k+1}\| + |\Delta W_k| + 1\right). 
	\end{align*}
	It follows from $\mathbb{E}[\log^+\left(\|\bar{\bm{V}}_{k+1}\| + |\Delta W_k| + 1\right)] < \infty$ and the multiplicative ergodic theorem (see \cite[Theorem 3.4.1]{arnoldRDS}) that $\lambda^{\tau}$ exists.

		We note that the process $(\beta_{k}, \bar{\beta}_{k})$ is excited by a combination of the Brownian increment $\Delta W_{k}$ as well as the colored noise $(\eta_{k}, \bar{\eta}_{k})$, which makes the evaluation of $\lambda^{\tau}$ nontrivial. 
		Rather than analyzing $\lambda^\tau$ directly, we relate it to the continuous Lyapunov exponent $\lambda_0$ through finite-time discretization errors and the ergodic properties of the exact and numerical variational processes.

		\subsection{Estimate of $\lambda^{\tau}$}
		To estimate $\lambda^{\tau}$, we set $B_{k} = (\beta_{k}, \bar{\beta}_{k})^{\top}$ for all $k\geq 0$ and introduce the  polar coordinate 
		\begin{equation*}
			B_k = \|B_{k}\|(\cos\psi_{k},\sin\psi_{k})^{\top}. 
		\end{equation*} 
		For a nonzero initial condition $B_0\neq (0,0)^\top$, the corresponding linear update is invertible for sufficiently small $\tau$, and hence $B_k\neq (0,0)^\top$ for all $k\geq 0$.  
		For brevity of notation, we denote 
		\begin{equation*}
			\mathcal{Z}_{k}=\kappa_{2} + 2\zeta_1 \bar{\eta}_{k} + \kappa_{1} \eta_{k}. 
		\end{equation*}
		The following proposition presents the ergodicity of the Markov chain associated with the variational dynamics, which plays a critical role in establishing the connection between $\lambda^{\tau}$ and $\lambda_{0}$. The proof of this proposition is postponed to Section \ref{Sec:proof}. 
		\begin{prop}\label{prop:ergodicity eta psi}
			There exists a unique stationary measure $\mu^{\tau}$ (resp. $\nu^{\tau}$) for  $\{(\eta_{k},\bar{\eta}_{k},\psi_{k})\}_{k\geq0}$ (resp.  $\{(\eta_{k},\bar{\eta}_{k})\}_{k\geq0}$). Furthermore, there exist constants $\bar{a}\in(0,1)$ and $\bar{b}\in(0,\infty)$ such that  
			\begin{equation}\label{ergodicity for eta}
				\left| \mathbb{E}\left[ Q(\eta_k,\bar{\eta}_k,\psi_k) \right] - \mu^{\tau}(Q) \right| \leq \bar{b} \bar{a}^{k\tau}  \Gamma_{2}(\eta_0,\bar{\eta}_0,\psi_0), 
			\end{equation}
			where the functions $Q$ and $\Gamma_{2}$ are given by \eqref{Qh} and \eqref{Gamma2}, respectively. 
		\end{prop}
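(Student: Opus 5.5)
We will prove this proposition with the Harris-type theorem for Markov chains: a geometric drift (Lyapunov) condition together with a minorization (small-set) condition. This is the discrete analogue of the argument behind \eqref{ergodic lambda0}. The chain $\{(\eta_k,\bar\eta_k,\psi_k)\}_{k\ge0}$ on $\tilde{\mathcal N}$ is Markov, because $(\eta_{k+1},\bar\eta_{k+1})$ is an affine function of $(\eta_k,\bar\eta_k,\Delta W_k)$ via the implicit Euler step in \eqref{VXbeta}. Moreover, $\psi_{k+1}$ is the angle of $B_{k+1}=M_k B_k$, where the random $2\times2$ matrix $M_k$ depends only on $(\mathcal Z_{k+1},\Delta W_k)$; projectively, $\psi_{k+1}$ is a function of $(\eta_k,\bar\eta_k,\psi_k,\Delta W_k)$. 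The subchain $(\eta_k,\bar\eta_k)$ is a linear Gaussian autoregression $X_{k+1}=AX_k+b\,\Delta W_k$, with $A=(I-\tau G)^{-1}$ and $G$ the companion matrix of $\kappa_1,\zeta_1$. Its spectral radius is below $1$, roughly $1-c\tau$, so $\nu^\tau$ exists, is unique and is Gaussian; I would dispose of this case first.

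\textbf{Drift condition.} Take $\Gamma_2(\eta,\bar\eta,\psi)=\kappa_1\eta^2+\bar\eta^2+\delta_4\eta\bar\eta+\psi^2+M$, mirroring $\Gamma_1$; I expect \eqref{Gamma2} to be of this form. Since $\psi\in[0,\pi)$, the $\psi^2$ term is bounded and only the $(\eta,\bar\eta)$ part matters. From the implicit scheme one computes, as in \eqref{mathbbV}, that
\[
\mathbb E[\Gamma_2(X_{k+1},\psi_{k+1})\mid X_k,\psi_k]\le (1-c\tau)\Gamma_2(X_k,\psi_k)+C\tau,
\]
with $c,C$ independent of $\tau$. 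Iterating gives $\mathbb E[\Gamma_2]\le e^{-ck\tau}\Gamma_2+C$. This uniformity in $\tau$ is what produces the rate $\bar a^{k\tau}$ rather than $\bar a^{k}$.

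\textbf{Minorization.} We need it on sublevel sets $\{\Gamma_2\le R\}$ after $m\approx T/\tau$ steps, for a fixed time horizon $T$. For the $(\eta,\bar\eta)$ component, after two steps $X_{k+2}$ has a nondegenerate Gaussian law, because $(b,Ab)$ spans $\mathbb R^2$. For the angle we need joint nondegeneracy of $(X_{k+m},\psi_{k+m})$. I would argue by a support/controllability argument. Viewing the map from $(\Delta W_k,\dots,\Delta W_{k+m-1})$ to $(X_{k+m},\psi_{k+m})$ as smooth, it suffices to find one noise sequence at which the Jacobian has rank $3$. This follows from the fact that $\sigma\beta_k\Delta W_k$ rotates the angle while $\Delta W$ also drives $\bar\eta$, and the discrete analogue of the Hörmander-type condition used in \cite[Section 8]{Baxendale24SIAM} can be checked by small perturbation. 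A lower bound on the transition density then holds locally, uniformly over initial points in the compact sublevel set, by continuity. Combining drift and minorization with Hairer--Mattingly's version of Harris' theorem gives a unique $\mu^\tau$ and geometric convergence in the $\Gamma_2$-weighted norm. Since $|Q|\le\Gamma_2$ for suitable $M$ by \eqref{Qh}, this yields \eqref{ergodicity for eta}.

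\textbf{Main obstacle.} The hard step is making the minorization constant uniform in $\tau$ after $T/\tau$ steps; without that, the rate would degenerate as $\tau\to0$. My first attempt would compare the $m$-step law of the chain with the time-$T$ law of the continuous process $(\eta,\dot\eta,\psi)$ from \eqref{xi}. The continuous process has a smooth positive density by hypoellipticity \cite{Baxendale24SIAM}, and strong convergence on $[0,T]$ transfers a uniform lower bound to the discrete chain for small $\tau$, up to a total-variation error. If this transfer only holds weakly, I would instead write down the explicit Gaussian structure of the $\eta$-increments conditioned on the path and do a direct change of variables for $\psi$.
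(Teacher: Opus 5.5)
Your architecture (a geometric drift condition plus a minorization, fed into a Harris-type theorem) is the same as the paper's, and your drift step is essentially the paper's computation \eqref{tilde Gamma}; the extra $\psi^2$ term in your $\Gamma_2$ is harmless. The gap is in the minorization, which is the real content of the proposition.

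\textbf{Minorization.} Finding one noise sequence at which the noise-to-state map has rank $3$ gives a density bounded below near one image point, for initial data near one point. ``By continuity'' does not turn this into a single minorizing measure on all of $\{\Gamma_2\le R\}$, a set that contains every angle $\psi_0$. For that you need a common target that every such initial point can reach. This is a genuine controllability statement for the angle, and it is exactly what you leave as ``checkable by small perturbation.''

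It is not automatic, because the same scalar increment $\Delta W_k$ drives both $\bar\eta_{k+1}$ and $\psi_{k+1}$. The paper settles it explicitly in three steps at fixed $\tau$:
\begin{itemize}
\item Since $\Lambda_3=(\Lambda_1\Lambda_2\ \Lambda_2)$ in \eqref{V3} is invertible, $\Delta W_1$ and $\Delta W_2$ can be taken as affine functions of $\Delta W_0$ that place $(\eta_3,\bar\eta_3)$ near $0$. This leaves $\Delta W_0$ free.
\item With $\rho_k=\tan\psi_k$, the angle obeys $(1+2\tau\zeta_2)\rho_{k+1}/(1-\tau\rho_{k+1})=\rho_k-\tau\mathcal{Z}_{k+1}+\sigma\Delta W_k$.
\item As $\Delta W_0\to\pm\infty$, $\rho_2(\Delta W_0)\to1/\tau$ while $\Delta W_2$ grows linearly in $\Delta W_0$. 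So $\rho_2+\sigma\Delta W_2$ attains every sufficiently large value of either sign, and $\psi_3$ can be placed near $\arctan(1/\tau)$ from any starting point.
\end{itemize}
Rational dependence on the increments then gives a continuous transition density, and \cite[Lemma 2.3, Theorem 2.5]{Mattingly2002} finish the argument.

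\textbf{Uniformity in $\tau$.} Your proposed route to $\tau$-uniform constants fails as stated. Strong convergence on $[0,T]$ only gives weak convergence of the $T/\tau$-step laws to the time-$T$ law of $(\eta,\dot\eta,\psi)$. Total variation is merely lower semicontinuous under weak limits, so approximating laws can be far apart in total variation even when their limits overlap. Hence a density lower bound for the hypoelliptic continuous process gives no bound of the form $P_\tau^{m}(x,\cdot)\ge\epsilon\,\nu(\cdot)$ for the chain.

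The paper does not track $\tau$ here either. Its target $\arctan(1/\tau)$ and its three-step construction depend on $\tau$, so what it proves is geometric ergodicity for each fixed small $\tau$; that is all the proposition literally asserts. If you want the uniformity (which the later sign-preservation corollary implicitly relies on), you need a different tool, not a transfer from strong convergence. One option is a direct lower bound on the $m$-step density built from the explicit Gaussian structure; your fallback points in this direction, but you would have to actually carry it out.
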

		The following lemma involves the estimates of $B_{k}$, which is proved in Appendix \ref{appendix:A}. 
		\begin{lemma}\label{lemma:Bk}
			Assume that $B_0\neq(0,0)^\top$. Then for any $k\geq1$, it holds that 
			\begin{align}
				\frac{\|B_{k}\|^2}{\|B_{k+1}\|^2} 
				&= 1+ \mathbb{B}_{k+1}
				\leq 1 + C\tau \left(|\mathcal{Z}_{k+1}|^2+1\right) + C|\Delta W_{k}| 
				+ C\Delta W_{k}^2, \label{Est:Bk1} \\
				\frac{\|B_{k}\|^2}{\|B_{k+1}\|^2} 
				&= 1-2\sigma\cos\psi_{k}\sin\psi_{k}\Delta W_{k} + \tilde{\mathbb{B}}_{k+1}, \label{Est:Bk3}
			\end{align}
			where $\mathbb{B}_{k+1}$ is given by \eqref{Est:Bk2} and $|\tilde{\mathbb{B}}_{k+1}| \leq C\left(\Delta W_{k}^2 + \Delta W_{k}^4\right) 
			+ C\tau\left(|\mathcal{Z}_{k+1}|^4+1\right)$. 
			Furthermore,  
			\begin{equation}\label{Est:Bk+1/k}
				\frac{\|B_{k+1}\|^2}{\|B_{k}\|^2} = 1+\mathbb{D}_{k+1} 
				\leq 1 + C\tau \left(|\mathcal{Z}_{k+1}|^2+1\right) + C|\Delta W_{k}| 
				+ C\Delta W_{k}^2, 
			\end{equation} 
			where $\mathbb{D}_{k+1}$ is given by \eqref{mathbbD}. 
		\end{lemma}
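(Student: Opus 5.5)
The plan is to solve the linear implicit step \eqref{VXbeta} for $B_{k+1}$ explicitly in terms of $B_k$. The first two equations give
\begin{equation*}
\beta_{k+1}=\beta_k+\tau\bar\beta_{k+1},\qquad (1+2\tau\zeta_2)\bar\beta_{k+1}=\bar\beta_k+\big(\sigma\Delta W_k-\tau\mathcal{Z}_{k+1}\big)\beta_k .
\end{equation*}
Hence $B_{k+1}=M_{k+1}B_k$ with
\begin{equation*}
M_{k+1}=\frac{1}{1+2\tau\zeta_2}\begin{pmatrix} 1+2\tau\zeta_2+\tau\xi_k & \tau\\ \xi_k & 1\end{pmatrix},\qquad \xi_k:=\sigma\Delta W_k-\tau\mathcal{Z}_{k+1}.
\end{equation*}
Its determinant is $\det M_{k+1}=(1+2\tau\zeta_2)^{-1}$, which is independent of the noise. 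So $M_{k+1}$ is always invertible, and
\begin{equation*}
M_{k+1}^{-1}=\begin{pmatrix}1 & -\tau\\ -\xi_k & 1+2\tau\zeta_2+\tau\xi_k\end{pmatrix}.
\end{equation*}
Since $B_k=M_{k+1}^{-1}B_{k+1}$, I would write $\|B_k\|^2/\|B_{k+1}\|^2$ as a quadratic form of $M_{k+1}^{-1}$ evaluated at the unit vector $(\cos\psi_{k+1},\sin\psi_{k+1})$. This defines $\mathbb{B}_{k+1}$ in \eqref{Est:Bk2}. In the same way, $\|B_{k+1}\|^2/\|B_k\|^2$ is the quadratic form of $M_{k+1}$ at $(\cos\psi_k,\sin\psi_k)$, which defines $\mathbb{D}_{k+1}$ in \eqref{mathbbD}.

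For the bounds \eqref{Est:Bk1} and \eqref{Est:Bk+1/k}, I would expand each quadratic form and bound every entry of $M-I$ or $M^{-1}-I$ by $C(\tau+|\xi_k|+\tau|\xi_k|)$. Squaring gives terms of size $\tau$, $|\xi_k|$ and $\xi_k^2$. I would then use $|\xi_k|\le\sigma|\Delta W_k|+\tau|\mathcal{Z}_{k+1}|$, $\xi_k^2\le 2\sigma^2\Delta W_k^2+2\tau^2\mathcal{Z}_{k+1}^2$, and $\tau|\mathcal{Z}_{k+1}|\le\tau(|\mathcal{Z}_{k+1}|^2+1)$ with $\tau<1$. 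Together these yield the common bound $1+C\tau(|\mathcal{Z}_{k+1}|^2+1)+C|\Delta W_k|+C\Delta W_k^2$.

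The expansion \eqref{Est:Bk3} is the delicate step, because its leading linear term is expressed through the old angle $\psi_k$ and not through $\psi_{k+1}$. I would instead write $\|B_k\|^2/\|B_{k+1}\|^2=\big(\|B_{k+1}\|^2/\|B_k\|^2\big)^{-1}=(1+\mathbb{D}_{k+1})^{-1}$. Expanding $\mathbb{D}_{k+1}$ at $(\cos\psi_k,\sin\psi_k)$ gives the linear-in-noise part $2\sigma\Delta W_k\cos\psi_k\sin\psi_k$, coming from the off-diagonal entry $\xi_k$. The remaining terms are $O(\tau(|\mathcal{Z}_{k+1}|+1)+\xi_k^2+\tau|\xi_k|)$. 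I would then use $(1+x)^{-1}=1-x+x^2/(1+x)$.

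The main obstacle is controlling $x^2/(1+x)$ uniformly, since $1+\mathbb{D}_{k+1}$ could be small. To handle it I would use $1+\mathbb{D}_{k+1}\ge \sigma_{\min}(M_{k+1})^2$ together with the identity $\sigma_{\min}\sigma_{\max}=\det M_{k+1}$. This gives $(1+\mathbb{D}_{k+1})^{-1}\le (1+2\tau\zeta_2)^2\|M_{k+1}\|^2$. The right-hand side is polynomial in $|\Delta W_k|$ and $\tau|\mathcal{Z}_{k+1}|$, so it does not require any extra smallness.

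Collecting the terms, the remainder $\tilde{\mathbb{B}}_{k+1}$ is bounded by products of at most quartic powers of $\Delta W_k$ and $\tau\mathcal{Z}_{k+1}$. Using Young's inequality to absorb the mixed terms, for example $\tau|\mathcal{Z}_{k+1}||\Delta W_k|\le\tau(\mathcal{Z}_{k+1}^2+1)+\Delta W_k^2$, and raising the powers of $\mathcal{Z}$ up to the fourth, I expect to obtain $|\tilde{\mathbb{B}}_{k+1}|\le C(\Delta W_k^2+\Delta W_k^4)+C\tau(|\mathcal{Z}_{k+1}|^4+1)$.
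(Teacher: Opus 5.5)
Your treatment of \eqref{Est:Bk1} and \eqref{Est:Bk+1/k} via the explicit transfer matrix $M_{k+1}$ and its inverse is correct. It is a genuinely different and tidier route than the paper's, which expands $\|B_k\|^2/\|B_{k+1}\|^2$ directly from the backward relations while keeping $\beta_k$ in the noise terms. Your identity $\det M_{k+1}=(1+2\tau\zeta_2)^{-1}$ also gives $B_k\neq 0$ for every $\tau>0$ and every noise path, which the paper only asserts for small $\tau$. The gap is in the last step for \eqref{Est:Bk3}.

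With $x=\mathbb{D}_{k+1}$, for large $|\xi_k|$ one has $x$ of order $\xi_k^2\cos^2\psi_k$, so $x^2$ is of degree four in $\xi_k$. Your bound $(1+x)^{-1}\le(1+2\tau\zeta_2)^2\|M_{k+1}\|^2$ is of degree two. The product is therefore only controlled by a degree-six polynomial, containing $\sigma^6\Delta W_k^6$, $\tau^2\mathcal{Z}_{k+1}^2\Delta W_k^4$ and $\tau^6\mathcal{Z}_{k+1}^6$. This contradicts your claim of ``at most quartic'' products, and Young's inequality cannot bring such terms under $C(\Delta W_k^2+\Delta W_k^4)+C\tau(|\mathcal{Z}_{k+1}|^4+1)$. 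The target bound is pathwise: with $\Delta W_k=0$, $\tau$ fixed and $|\mathcal{Z}_{k+1}|\to\infty$, the ratio $\tau^6\mathcal{Z}_{k+1}^6/(\tau\mathcal{Z}_{k+1}^4)$ is unbounded. The statement itself is fine, since the true remainder is only quadratic in $\xi_k$ for large $|\xi_k|$; your estimate is simply too lossy.

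The repair is one line. Since $(1+\mathbb{B}_{k+1})(1+\mathbb{D}_{k+1})=1$, you have $x/(1+x)=-\mathbb{B}_{k+1}$, hence
\[
\frac{x^2}{1+x}=-\mathbb{B}_{k+1}\mathbb{D}_{k+1},\qquad |\mathbb{B}_{k+1}\mathbb{D}_{k+1}|\le C\big(\tau(|\mathcal{Z}_{k+1}|^2+1)+|\Delta W_k|+\Delta W_k^2\big)^2\le C(\Delta W_k^2+\Delta W_k^4)+C\tau(|\mathcal{Z}_{k+1}|^4+1)
\]
for $\tau\le 1$, using the two degree-two bounds you already proved. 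A case split on $|\xi_k|\le 1$ versus $|\xi_k|>1$ would also work.

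The paper avoids the issue altogether. It substitutes
\[
\frac{\beta_k\bar\beta_{k+1}}{\|B_{k+1}\|^2}=(\cos\psi_k\sin\psi_k+\sigma\cos^2\psi_k\,\Delta W_k)(1+\mathbb{B}_{k+1})-\tau(\cdots)
\]
into \eqref{Est:Bk2}. The remainder is then a polynomial in $\Delta W_k$, $\mathbb{B}_{k+1}$ and $\tau\mathcal{Z}_{k+1}$ with no reciprocal, so the degree-four bound comes out directly.
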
 
		Based on these results, we present a critical estimate for $\lambda^{\tau}$, which is used to establish its connection to $\lambda_0$. 
		\begin{thm}\label{thm:lambda tau}
			Assume that $B_0\neq(0,0)^\top$. Then for sufficiently small $\tau$, it holds that 
			\begin{equation}\label{positivity of lambdatau}
				\mu^{\tau}(Q) 
				\leq \lambda^{\tau} 
				+ C \tau^{\frac{1}{2}}, 
			\end{equation}
			where $\mu^{\tau}$ is the stationary measure for $(\eta_{k},\bar{\eta}_{k},\psi_{k})$ and the function $Q$ is given by \eqref{Qh}. 
		\end{thm}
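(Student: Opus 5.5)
Our plan is to write $\log\|B_k\|$ as a telescoping sum of one-step log-ratios, take expectations, and compare each increment with $\tau Q(\eta_k,\bar\eta_k,\psi_k)$ using Lemma~\ref{lemma:Bk}. Concretely,
\begin{equation*}
\frac{1}{k\tau}\log\|B_k\| = \frac{1}{k\tau}\log\|B_0\| - \frac{1}{2k\tau}\sum_{j=0}^{k-1}\log\frac{\|B_j\|^2}{\|B_{j+1}\|^2}.
\end{equation*}
Since $\lambda^\tau$ is an a.s. limit, we want to pass to expectations. The uniform integrability needed for this comes from \eqref{Est:Bk1} and \eqref{Est:Bk+1/k}: both one-step ratios are bounded by $1+C\tau(|\mathcal Z_{k+1}|^2+1)+C|\Delta W_k|+C\Delta W_k^2$. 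Together with the uniform second moments of $(\eta_k,\bar\eta_k)$ from the moment bound of Section~\ref{Sec:numerical attractor}, this controls $|\log(\cdot)|$ in $L^2$ uniformly in $j$. A Birkhoff/martingale argument for the averaged increments then gives $\lambda^\tau=\lim_k \frac1{k\tau}\mathbb E\log\|B_k\|$. Equivalently, one can start the chain from $\mu^\tau$ and use the Furstenberg--Khasminskii-type formula $\lambda^\tau=-\frac{1}{2\tau}\mathbb E_{\mu^\tau}\log(1+\mathbb B_1)$.

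The core step is a one-step expansion. We use the elementary inequality $\log(1+x)\le x$ in the direction favorable to the claimed inequality:
\begin{equation*}
-\tfrac12\log\frac{\|B_j\|^2}{\|B_{j+1}\|^2}\ \ge\ -\tfrac12\mathbb B_{j+1}.
\end{equation*}
This explains why only the one-sided bound $\mu^\tau(Q)\le\lambda^\tau+C\tau^{1/2}$ is claimed. However, a pure $x$ bound loses the Itô correction $\tfrac{\sigma^2}{2}\cdot$(quadratic term) that is present in $Q$. So we use instead $\log(1+x)\le x-\tfrac{x^2}{2}+\tfrac{x^3}{3}$, valid for $x>-1$, and expand $\mathbb B_{j+1}$ via \eqref{Est:Bk3}.

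Taking conditional expectation given $\mathcal F_{t_j}$, the term $-2\sigma\cos\psi_j\sin\psi_j\Delta W_j$ vanishes. The square of this term contributes $4\sigma^2\cos^2\psi_j\sin^2\psi_j\,\tau$. The deterministic $O(\tau)$ part of $\tilde{\mathbb B}_{j+1}$ must be identified explicitly, from the definition \eqref{Est:Bk2} of $\mathbb B_{j+1}$ in the appendix, as
\begin{equation*}
-2\tau\bigl[(1-\mathcal Z_{j+1})\sin\psi_j\cos\psi_j-2\zeta_2\sin^2\psi_j+\tfrac{\sigma^2}{2}\cos^2\psi_j\cos2\psi_j\bigr]
\end{equation*}
(with the $\sigma^2\Delta W_j^2\cos^2\psi_j$-type terms combining appropriately). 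Everything else is a remainder. The result we aim for is
\begin{equation*}
\mathbb E\Bigl[-\tfrac12\log\frac{\|B_j\|^2}{\|B_{j+1}\|^2}\,\Big|\,\mathcal F_{t_j}\Bigr]\ \ge\ \tau Q(\eta_j,\bar\eta_j,\psi_j) - C\tau^{3/2}\bigl(1+|\eta_j|^4+|\bar\eta_j|^4\bigr).
\end{equation*}
Here we also replace $\mathcal Z_{j+1}$ by $\mathcal Z_j$ at cost $\tau\,\mathbb E|\mathcal Z_{j+1}-\mathcal Z_j|\le C\tau^{3/2}$ by the scheme \eqref{VXbeta}. The remainder terms $\Delta W^3,\Delta W^4,\tau\Delta W$ and the mixed terms from the cube are all $O(\tau^{3/2})$ in expectation. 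Their moments in $\eta$ are bounded uniformly in $j$, since the fourth moments follow analogously to the second-moment bound for the linear implicit Euler scheme for $\eta$.

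Finally, we sum over $j$, divide by $k\tau$, and let $k\to\infty$. Proposition~\ref{prop:ergodicity eta psi} gives $\frac1k\sum_j\mathbb E Q(\eta_j,\bar\eta_j,\psi_j)\to\mu^\tau(Q)$, because the error $\bar b\bar a^{j\tau}\Gamma_2$ sums to $O(1/(k\tau))\to0$. This yields $\lambda^\tau\ge\mu^\tau(Q)-C\tau^{1/2}$. The main obstacle I expect is the bookkeeping in the one-step expansion. Specifically, one must extract the exact drift $Q$, including the $\sigma^2$ Itô term coming from $\mathbb E\Delta W^2=\tau$ inside $\mathbb B_{j+1}$, while keeping every remainder at order $\tau^{3/2}$ with moment bounds uniform in $\tau$. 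A secondary issue is justifying the interchange of the a.s. limit and the expectation; for this I would rely on the $L^2$ bounds on the increments together with a martingale strong law, as in the continuous case.
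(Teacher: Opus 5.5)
Your argument is correct in outline, and its core is the same one-step expansion the paper uses in \eqref{logBk-logBk+1}: the inequality $\log(1+x)\le x-\frac{x^2}{2}+\frac{x^3}{3}$ applied to $x=\mathbb{B}_{j+1}$, with the It\^o correction generated by $\mathbb{E}[\Delta W_j^2]=\tau$. You differ from the paper in how the sum is averaged.

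\emph{The paper's route} is pathwise. It sums \eqref{logBk-logBk+1} and gets $\frac1N\sum_k Q(\eta_{k+1},\bar\eta_{k+1},\psi_{k+1})\to\mu^\tau(Q)$ from Birkhoff's theorem. It disposes of $\sum_k(\tau-\Delta W_k^2)\sigma^2\cos^2\psi_k\cos2\psi_k$ and $\sum_k\cos\psi_k\sin\psi_k\Delta W_k$ with a martingale strong law. It bounds the remainders almost surely by the strong law for $|\Delta W_k|^p$ plus Cauchy--Schwarz against ergodic averages of $|\mathcal{Z}_k|^{24}$, which is why it needs $\nu^\tau(\|\cdot\|^{24})<\infty$.

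\emph{Your route} conditions on $\mathcal{F}_{t_j}$ instead. The martingale terms then vanish exactly, and the remainders are controlled by uniform-in-$j$ moments of the Gaussian chain $(\eta_j,\bar\eta_j)$. You then use the quantitative bound \eqref{ergodicity for eta} rather than Birkhoff, which handles a non-stationary start directly.

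The price is the interchange of limit and expectation, and it does go through. By \eqref{Est:Bk1} and \eqref{Est:Bk+1/k},
\[
\bigl|\log(\|B_{j+1}\|^2/\|B_j\|^2)\bigr|\le C\tau(|\mathcal{Z}_{j+1}|^2+1)+C|\Delta W_j|+C\Delta W_j^2 .
\]
Hence $\frac{1}{k\tau}\log\|B_k\|$ is bounded in $L^2$ uniformly in $k$, and Vitali's theorem applies. However, this identifies $\lambda^\tau$ with the limit of expectations only because $\lambda^\tau$ is an almost-sure constant, which you must take from the multiplicative ergodic theorem. The paper's pathwise argument gives the inequality for the pathwise exponent directly, without that input.

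One bookkeeping point needs fixing. The $O(\tau)$ conditional mean of $\tilde{\mathbb{B}}_{j+1}$ is not $-2\tau Q$. By \eqref{mathbbB} it is
\[
-2\tau\Bigl[(1-\mathcal{Z}_{j+1})\sin\psi_j\cos\psi_j-2\zeta_2\sin^2\psi_j+\tfrac{\sigma^2}{2}\cos^2\psi_j\,(1-4\sin^2\psi_j)\Bigr]+O(\tau^{3/2}).
\]
It is the quadratic term $\frac14\mathbb{E}[\mathbb{B}_{j+1}^2\,|\,\mathcal{F}_{t_j}]=\sigma^2\tau\cos^2\psi_j\sin^2\psi_j+O(\tau^{3/2})$ that converts $1-4\sin^2\psi_j$ into $\cos2\psi_j$. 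As you wrote it, the It\^o correction is counted twice. The one-step bound $\ge\tau Q-C\tau^{3/2}(\cdots)$ you state is still correct, but only once the expansion is done this way.

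Also, the drift in \eqref{mathbbB} is evaluated at $\psi_{j+1}$. So besides $\mathcal{Z}_{j+1}\to\mathcal{Z}_j$ you must replace $\psi_{j+1}\to\psi_j$, again at cost $O(\tau^{3/2})$. Alternatively, keep index $j+1$ and telescope the $\sigma^2$ term as the paper does.
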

		\begin{proof}
			In order to derive the expression of $\lambda^{\tau}$ defined by \eqref{numerical Lyapunov exponent}, we first investigate the estimate of $\|B_{k}\|$. 
			Denote $\bm{I} = (0, 1)^{\top}$. Applying \eqref{VXbeta} and $\beta_{k}=\beta_{k+1} - \tau \bar{\beta}_{k+1}$, we obtain 
			\begin{align*}
				B_{k+1} = B_{k} + \begin{pmatrix}
					0 & \tau \\ 
					- \tau \mathcal{Z}_{k+1} &  
					- 2\tau\zeta_2  
				\end{pmatrix}
				B_{k+1} +
				\left( \sigma\beta_{k}\Delta W_{k}  + \tau^2 \mathcal{Z}_{k+1}\bar{\beta}_{k+1}\right) \bm{I}. 
			\end{align*}   
			On the one hand, by Taylor's expansion, we have 
			\begin{align*}
				&\|B_{k}+\sigma\beta_{k}\Delta W_{k} \bm{I}\|^2 \\
				=& \|B_{k+1}\|^2 + 2 \left\langle B_{k+1} , \,  B_{k}+\sigma\beta_{k}\Delta W_{k} \bm{I}-B_{k+1} \right\rangle + \|B_{k}+\sigma\beta_{k} \Delta W_{k} \bm{I}-B_{k+1}\|^2 \\
				=& \|B_{k+1}\|^2 \left(1-2 \tau \cos\psi_{k+1} \sin\psi_{k+1} \left(1- \mathcal{Z}_{k+1}\right) + 4\tau\zeta_2 \sin^2\psi_{k+1}
				+ \tau^2\mathcal{H}_{k+1} \right), 
			\end{align*}
			where $\mathcal{H}_{k} = \sin^2\psi_{k} (1 - 2\mathcal{Z}_{k}) + \left( \mathcal{Z}_{k} \cos\psi_{k} + (2\zeta_2-\tau\mathcal{Z}_{k}) \sin\psi_{k} \right)^2$ and satisfies 
			\begin{equation}\label{mathcal H}
				|\mathcal{H}_{k}| \leq  |1 - 2\mathcal{Z}_{k}| 
				+ ( |\mathcal{Z}_{k}| + 2\zeta_2+\tau|\mathcal{Z}_{k}|)^2  
				\leq C(|\mathcal{Z}_{k}|^2+1). 
			\end{equation}
			On the other hand, 
			\begin{align*}
				\|B_{k}+\sigma\beta_{k}\Delta W_{k} \bm{I}\|^2&=\|B_{k}\|^2+\sigma^2\beta_{k}^2\Delta W_{k}^2 + 2\sigma\beta_{k}\bar{\beta}_{k} \Delta W_{k} \\
				&=\|B_{k}\|^2 + \|B_{k+1}\|^2 \frac{\|B_{k}\|^2}{\|B_{k+1}\|^2}\left( \sigma^2\cos^2\psi_{k}\Delta W_{k}^2 + 2\sigma\cos\psi_{k}\sin\psi_{k}\Delta W_{k} \right). 
			\end{align*}
			It follows that 
			\begin{align*}
				\|B_{k}\|^2 &= \|B_{k+1}\|^2 \bigg( 1-2 \tau \cos\psi_{k+1} \sin\psi_{k+1} \left(1- \mathcal{Z}_{k+1}\right) + 4\tau\zeta_2 \sin^2\psi_{k+1} + \tau^2\mathcal{H}_{k+1} \\
				&\qquad \qquad \qquad - \frac{\|B_{k}\|^2}{\|B_{k+1}\|^2}\left( \sigma^2\cos^2\psi_{k}\Delta W_{k}^2 + 2\sigma\cos\psi_{k}\sin\psi_{k}\Delta W_{k} \right)
				\bigg). 
			\end{align*}
			
			We next derive the estimate of $\log\|B_{k}\|$. 
			By using the inequality $\log(1+x)\leq x-\frac{x^2}{2}+\frac{x^3}{3}$ for $x>-1$, we have 
			\begin{align*}
				& \log\|B_{k}\|^2 - \log \|B_{k+1}\|^2 \\
				= \,& \log\bigg( 1 \underbrace{-2 \tau \cos\psi_{k+1} \sin\psi_{k+1} \left(1- \mathcal{Z}_{k+1}\right) + 4\tau\zeta_2 \sin^2\psi_{k+1}+ \tau^2\mathcal{H}_{k+1}
					- \frac{\sigma^2\cos^2\psi_{k}\|B_{k}\|^2}{\|B_{k+1}\|^2} \Delta W_{k}^2}_{:=\mathbb{X}_1}  \\
				&\qquad - \underbrace{\frac{\|B_{k}\|^2}{\|B_{k+1}\|^2} 2\sigma\cos\psi_{k}\sin\psi_{k}\Delta W_{k}}_{:=\mathbb{X}_2}  
				\bigg) \\
				\leq \,&-2 \tau \cos\psi_{k+1} \sin\psi_{k+1} \left(1-\mathcal{Z}_{k+1}\right) + 4\tau\zeta_2 \sin^2\psi_{k+1} + \tau^2\mathcal{H}_{k+1} 
				- \frac{\|B_{k}\|^2}{\|B_{k+1}\|^2} \sigma^2\cos^2\psi_{k}\Delta W_{k}^2 \\
				& - \frac{\|B_{k}\|^2}{\|B_{k+1}\|^2}  2\sigma\cos\psi_{k}\sin\psi_{k}\Delta W_{k} 
				- \frac{1}{2} \frac{\|B_{k}\|^4}{\|B_{k+1}\|^4} 4\sigma^2 \cos^2\psi_{k}\sin^2\psi_{k} \Delta W_{k}^2 + \mathcal{R}_{1,k+1}, 
			\end{align*}
			where $\mathcal{R}_{1,k+1}= -\frac{1}{2} \mathbb{X}_1^2 + \mathbb{X}_1 \mathbb{X}_2 + \frac{1}{3} (\mathbb{X}_1-\mathbb{X}_2)^3$. 
			It follows from \eqref{mathcal H} and \eqref{Est:Bk1} that 
			\begin{equation}\label{Est:mathcalR1} 
				|\mathcal{R}_{1,k+1}|
				\leq C (\tau|\Delta W_{k}|+\tau^2) \left(  |\mathcal{Z}_{k+1}|^{12} +1 \right)
				+ |\Delta W_{k}|^3 + |\Delta W_{k}|^{12}.  
			\end{equation}
			In addition, by \eqref{Est:Bk1} and \eqref{Est:Bk3}, we have 
			\begin{align*}
				&- \frac{\|B_{k}\|^2}{\|B_{k+1}\|^2}\left( \sigma^2\cos^2\psi_{k}\Delta W_{k}^2 + 2\sigma\cos\psi_{k}\sin\psi_{k}\Delta W_{k} \right) 
				- \frac{2\sigma^2\|B_{k}\|^4}{\|B_{k+1}\|^4} \cos^2\psi_{k}\sin^2\psi_{k}\Delta W_{k}^2 \\ 
				=\,& \sigma^2\cos^2\psi_{k}\Delta W_{k}^2 \left(-1-\mathbb{B}_{k+1}\right) 
				- 2\sigma\cos\psi_{k}\sin\psi_{k}\Delta W_{k} \left( 1-2\sigma\cos\psi_{k}\sin\psi_{k}\Delta W_{k} 
				+ \tilde{\mathbb{B}}_{k+1} \right) \\
				& - 2\sigma^2 \cos^2\psi_{k}\sin^2\psi_{k} \Delta W_{k}^2 
				\left( 1+ \mathbb{B}_{k+1} \right)^2 \\ 
				\leq\,& -\sigma^2\cos^2\psi_{k}\Delta W_{k}^2 
				- 2\sigma\cos\psi_{k}\sin\psi_{k}\Delta W_{k} + 2\sigma^2 \cos^2\psi_{k}\sin^2\psi_{k} \Delta W_{k}^2 \\
				& + \sigma^2\Delta W_{k}^2|\mathbb{B}_{k+1}|
				+ 2\sigma|\Delta W_{k}| |\tilde{\mathbb{B}}_{k+1}| 
				+ 2\sigma^2 \Delta W_{k}^2 
				\left( \mathbb{B}_{k+1}^2 + 2 |\mathbb{B}_{k+1}| \right)\\ 
				=:& -\sigma^2\cos^2\psi_{k}\Delta W_{k}^2 
				- 2\sigma\cos\psi_{k}\sin\psi_{k}\Delta W_{k} + 2\sigma^2 \cos^2\psi_{k}\sin^2\psi_{k} \Delta W_{k}^2 
				+ \mathcal{R}_{2,k+1}, 
			\end{align*}
			where, in view of \eqref{Est:Bk1} and \eqref{Est:mathbbB2}, 
			\begin{equation}\label{Est:mathcalR2}
				|\mathcal{R}_{2,k+1}| 
				\leq C \left( \tau|\Delta W_{k}| + \tau \Delta W_{k}^2 \right) \left(|\mathcal{Z}_{k+1}|^4+1\right)
				+ |\Delta W_{k}|^6 + |\Delta W_{k}|^3. 
			\end{equation}
			Combining the above estimates together, we obtain 
			\begin{equation}\label{logBk-logBk+1}
				\begin{aligned}
					& \log\|B_{k}\|^2 - \log \|B_{k+1}\|^2 \\
					\leq\,& -2 \tau \cos\psi_{k+1} \sin\psi_{k+1} \left(1-\mathcal{Z}_{k+1}\right) + 4\tau\zeta_2 \sin^2\psi_{k+1} -\sigma^2\cos^2\psi_{k}\Delta W_{k}^2 \\
					& + 2\sigma^2 \cos^2\psi_{k}\sin^2\psi_{k} \Delta W_{k}^2
					- 2\sigma\cos\psi_{k}\sin\psi_{k}\Delta W_{k} + \tau^2\mathcal{H}_{k+1} 
					+ \mathcal{R}_{2,k+1} + \mathcal{R}_{1,k+1} \\
					=\,& -2 \tau Q\left( \eta_{k+1},\bar{\eta}_{k+1},\psi_{k+1} \right) 
					+\sigma^2\left(\tau-\Delta W_{k}^2\right)  \cos^2\psi_{k}\cos(2\psi_{k})
					- 2\sigma\cos\psi_{k}\sin\psi_{k}\Delta W_{k}  \\
					&- \tau \sigma^2 \left( \cos^2\psi_{k}\cos(2\psi_{k}) - \cos^2\psi_{k+1}\cos(2\psi_{k+1}) \right) + \tau^2\mathcal{H}_{k+1} 
					+ \mathcal{R}_{2,k+1} + \mathcal{R}_{1,k+1}, 
				\end{aligned}
			\end{equation} 
			where $Q$ is given by \eqref{Qh}.  Summing both sides of \eqref{logBk-logBk+1} over $k$ from $0$ to $N-1$ yields that  
			\begin{equation}\label{1/N logB}
				\begin{aligned}
					& \frac{1}{N} \sum_{k=0}^{N-1} Q\left( \eta_{k+1},\bar{\eta}_{k+1},\psi_{k+1} \right) 
					+ \frac{\sigma^2}{2N} \left( \cos^2\psi_{0}\cos(2\psi_{0}) - \cos^2\psi_{N}\cos(2\psi_{N}) \right) \\
					\leq\,& \frac{1}{N\tau} \log \|B_{N}\| -  \frac{1}{N\tau} \log\|B_{0}\| 
					+ \frac{1}{2N\tau} \sum_{k=0}^{N-1} \left( \tau^2\mathcal{H}_{k+1} 
					+ \mathcal{R}_{2,k+1} + \mathcal{R}_{1,k+1} \right)
					\\
					& +\frac{1}{2N\tau} \sum_{k=0}^{N-1} \left( \left(\tau-\Delta W_{k}^2\right)  \sigma^2\cos^2\psi_{k}\cos(2\psi_{k})
					- 2\sigma\cos\psi_{k}\sin\psi_{k}\Delta W_{k} \right). 
				\end{aligned}
			\end{equation}
			This gives the estimate of $\log\|B_{k}\|$. By the definition of $\lambda^{\tau}$, taking the limit as $N\rightarrow\infty$ in \eqref{1/N logB} yields the lower bound for $\lambda^{\tau}$. 
			
			We next demonstrate that the last summation in \eqref{1/N logB} vanishes as $N\rightarrow\infty$. In fact, one can be verify that 
			\begin{equation*}
				\sum_{k=1}^{n}  \left(\tau-\Delta W_{k-1}^2\right)  \sigma^2\cos^2\psi_{k-1}\cos(2\psi_{k-1}) =: \sum_{k=1}^{n} x_{k}
			\end{equation*}
			is a martingale and  satisfies that $\sum_{k=1}^{\infty} k^{-2} \mathbb{E}[x_k^2] < \infty$. 
			It follows from the martingale limit theory (see, e.g., \cite[Theorem 2.18]{Hall1980}) that  
			\begin{equation}\label{martingale1}
				\lim\limits_{N\rightarrow\infty} \frac{1}{N} \sum_{k=0}^{N-1}  \left(\tau-\Delta W_{k}^2\right)  \sigma^2\cos^2\psi_{k}\cos(2\psi_{k})	= 0, \qquad \mathbb{P}\text{-a.s.} 
			\end{equation} 
			In an analogue way, one can show that 
			\begin{equation}\label{martingale2}
				\lim\limits_{N\rightarrow\infty} \frac{1}{N} \sum_{k=0}^{N-1}  2\sigma\cos\psi_{k}\sin\psi_{k}\Delta W_{k} = 0, \qquad \mathbb{P}\text{-a.s.} 
			\end{equation}
			Now by taking the limit as $N\rightarrow\infty$ in \eqref{1/N logB} and using Birkhoff's ergodic theorem, \eqref{martingale1}, and \eqref{martingale2}, we obtain 
			\begin{equation}\label{Est:lambda tau}
				\mu^{\tau}(Q) 
				\leq \lambda^{\tau} 
				+ \lim\limits_{N\rightarrow\infty}  \frac{1}{2N\tau} \sum_{k=1}^{N} \left( \tau^2\mathcal{H}_{k} 
				+ \mathcal{R}_{2,k} + \mathcal{R}_{1,k} \right), 
			\end{equation}
			where $\mu^{\tau}$ is the unique stationary measure for the ergodic process $\{(\eta_{k},\bar{\eta}_{k},\psi_{k})\}_{k\geq0}$.   
			
			Finally, we derive \eqref{positivity of lambdatau} by estimating the limitation in \eqref{Est:lambda tau}. 
			For any $p\geq1$, since $\Delta W_{k}$ are independent and identically distributed random variables, by the strong law of large numbers, we have 
			\begin{equation}\label{SLLN1}
				\lim\limits_{N\rightarrow\infty}\frac{1}{N} \sum_{k=1}^{N} |\Delta W_{k}|^{p} 
				= \mathbb{E}\left[ |\Delta W_{1}|^{p} \right]
				= C(p) \tau^{\frac{p}{2}}, \qquad \mathbb{P}\text{-a.s.}
			\end{equation}
			For any $p_1, p_2 \geq 1$, it follows from Cauchy--Schwarz's inequality and \eqref{SLLN1} that 
			\begin{equation}\label{SLLN2}
				\begin{aligned}
					\lim\limits_{N\rightarrow\infty}\frac{1}{N} \sum_{k=1}^{N} |\Delta W_{k}|^{p_1} |\mathcal{Z}_{k}|^{p_2} 
					&\leq \lim\limits_{N\rightarrow\infty} \left(\frac{1}{N} \sum_{k=1}^{N} |\Delta W_{k}|^{2p_1} \right)^\frac{1}{2} 
					\left(\frac{1}{N} \sum_{k=1}^{N} |\mathcal{Z}_{k}|^{2p_2} \right)^{\frac{1}{2}} \\
					&\leq C(p_1)\tau^{\frac{p_1}{2}} 
					\lim\limits_{N\rightarrow\infty}
					\left(\frac{1}{N} \sum_{k=1}^{N} |\mathcal{Z}_{k}|^{2p_2} \right)^{\frac{1}{2}}. 
				\end{aligned} 
			\end{equation}
			Consequently, combining \eqref{mathcal H}--\eqref{Est:mathcalR2} and utilizing \eqref{SLLN1}, \eqref{SLLN2}, and Proposition \ref{prop:ergodicity eta psi}, we obtain 
			\begin{align*}
				& \lim\limits_{N\rightarrow\infty}  \frac{1}{2N\tau} \sum_{k=1}^{N} \left( \tau^2\mathcal{H}_{k} 
				+ \mathcal{R}_{2,k} + \mathcal{R}_{1,k} \right) \\
				\leq \,& \lim\limits_{N\rightarrow\infty}  \frac{C}{2N\tau} \sum_{k=1}^{N} \left( \tau|\Delta W_{k-1}| + \tau\Delta W_{k-1}^2 + \tau^2 \right) \left(|\mathcal{Z}_{k}|^{12} + 1 \right) 
				+ \lim\limits_{N\rightarrow\infty}  \frac{C}{2N\tau} \sum_{k=0}^{N-1}\left( |\Delta W_{k}|^3 + \Delta W_{k}^{12} \right) \\
				\leq \,& C \tau^{\frac{1}{2}} \left( \lim\limits_{N\rightarrow\infty}  \frac{1}{N} \sum_{k=1}^{N} |\mathcal{Z}_{k}|^{24} + 1\right) 
				+ \frac{C}{2\tau} (\tau^{\frac{3}{2}} + \tau^{4}) 
				\leq C \tau^{\frac{1}{2}} \left( 1+\nu^{\tau}(\|\cdot\|^{24}) \right), 
			\end{align*}
			where $\nu^{\tau}$ is the unique stationary measure for $\{(\eta_{k},\bar{\eta}_{k})\}_{k\geq0}$, and $\nu^{\tau}(\|\cdot\|^{p}):=\int_{\mathbb{R}^2} \|\bm{x}\|^{p} \nu^{\tau}(\rmd \bm{x})$. 
			In view of the exponential moment boundedness of $\eta_k$ and $\bar{\eta}_k$, i.e., \eqref{exp int X}, one can verify that $\nu^{\tau}(\|\cdot\|^{p})\leq C(p)$ for any $p\geq1$. 
			Substituting the above estimates into \eqref{Est:lambda tau} completes the proof. 
		\end{proof}

		\begin{thm}\label{thm:lambda tau2}
			For sufficiently small $\tau$, the numerical Lyapunov exponent $\lambda^{\tau}$ satisfies that 
			\begin{equation}\label{negative of lambdatau}
				\lambda^{\tau} 
				\leq \mu^{\tau}(Q) 
				+ C \tau^{\frac{1}{2}}. 
			\end{equation} 
		\end{thm}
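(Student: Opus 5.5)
The plan is to mirror the proof of Theorem \ref{thm:lambda tau}, but now using the forward ratio $\|B_{k+1}\|^2/\|B_k\|^2$ from \eqref{Est:Bk+1/k} instead of the backward ratio. Where before we bounded $\log\|B_k\|^2-\log\|B_{k+1}\|^2$ from above, we now bound $\log\|B_{k+1}\|^2-\log\|B_k\|^2$ from above.

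\textbf{Step 1: one-step expansion of the forward ratio.} Starting from the identity $B_{k+1} = B_k + \tau A_{k+1}B_{k+1} + (\sigma\beta_k\Delta W_k+\tau^2\mathcal{Z}_{k+1}\bar\beta_{k+1})\bm{I}$ and the expansion of $\|B_k+\sigma\beta_k\Delta W_k\bm{I}\|^2$ obtained there, we solve for $\|B_{k+1}\|^2/\|B_k\|^2$. The result is
\begin{equation*}
\frac{\|B_{k+1}\|^2}{\|B_k\|^2}=1+\mathbb{D}_{k+1},
\end{equation*}
where, to leading order,
\begin{equation*}
\mathbb{D}_{k+1}\approx 2\tau\cos\psi_{k+1}\sin\psi_{k+1}(1-\mathcal{Z}_{k+1})-4\tau\zeta_2\sin^2\psi_{k+1}+\sigma^2\cos^2\psi_k\Delta W_k^2+2\sigma\cos\psi_k\sin\psi_k\Delta W_k .
\end{equation*}
We get this by dividing the earlier identity by $\|B_k\|^2$ and inverting $1+x$ via \eqref{Est:Bk1}, \eqref{Est:Bk3} and \eqref{Est:Bk+1/k}.

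\textbf{Step 2: Taylor expansion of the logarithm.} We use $\log(1+x)\le x-\frac{x^2}{2}+\frac{x^3}{3}$ for $x>-1$ and keep the second-order term $-\frac12\big(2\sigma\cos\psi_k\sin\psi_k\Delta W_k\big)^2$. Collecting terms reproduces $2\tau Q(\eta_{k+1},\bar\eta_{k+1},\psi_{k+1})$: the combination $\sigma^2\cos^2\psi\,\Delta W^2-2\sigma^2\cos^2\psi\sin^2\psi\,\Delta W^2$ equals $\sigma^2\cos^2\psi\cos 2\psi\,\Delta W^2$, which matches the It\^o correction in $Q$ after replacing $\Delta W_k^2$ by $\tau$. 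What is left over is:
\begin{itemize}
\item the martingale terms $(\Delta W_k^2-\tau)\sigma^2\cos^2\psi_k\cos2\psi_k$ and $2\sigma\cos\psi_k\sin\psi_k\Delta W_k$;
\item the telescoping difference $\tau\sigma^2(\cos^2\psi_k\cos2\psi_k-\cos^2\psi_{k+1}\cos2\psi_{k+1})$;
\item remainders $\tau^2\mathcal{H}_{k+1}+\mathcal{R}'_{1,k+1}+\mathcal{R}'_{2,k+1}$.
\end{itemize}
These remainders satisfy bounds of the same type as \eqref{Est:mathcalR1} and \eqref{Est:mathcalR2}, namely $C(\tau|\Delta W_k|+\tau\Delta W_k^2+\tau^2)(|\mathcal{Z}_{k+1}|^{p}+1)+C(|\Delta W_k|^3+|\Delta W_k|^{q})$ for some powers $p,q$.

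\textbf{Step 3: summation and limits.} We sum over $k=0,\dots,N-1$, divide by $2N\tau$ and let $N\to\infty$. The left side tends to $\lambda^\tau$. The telescoping term is $O(1/N)$. The martingale terms vanish almost surely by \eqref{martingale1} and \eqref{martingale2}. Birkhoff's ergodic theorem together with Proposition \ref{prop:ergodicity eta psi} turns the $Q$-average into $\mu^\tau(Q)$. The remainders are handled exactly as before using \eqref{SLLN1}, \eqref{SLLN2} and the moment bound $\nu^\tau(\|\cdot\|^p)\le C(p)$, and contribute at most $C\tau^{1/2}$. The $\tau^{1/2}$ rate comes from the $\tau|\Delta W_k|$ terms after dividing by $\tau$; the terms $|\Delta W_k|^3/\tau$ give $\tau^{1/2}$ as well.

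\textbf{Main obstacle.} The delicate step is Step 1. The forward ratio involves $\|B_k\|^2/\|B_{k+1}\|^2$ inside its own expression, so we must show that $1+\mathbb{D}_{k+1}$ stays bounded away from $0$ and that $\log$ of it admits a clean expansion. The cross term $\frac{\|B_k\|^2}{\|B_{k+1}\|^2}\,2\sigma\cos\psi_k\sin\psi_k\Delta W_k$ must be reduced to $2\sigma\cos\psi_k\sin\psi_k\Delta W_k$ plus an $O(\Delta W_k^2)$ correction via \eqref{Est:Bk3}. When $\Delta W_k$ is large, we instead rely on the crude polynomial bound \eqref{Est:Bk+1/k}, whose contribution is absorbed by the high moments of $\Delta W_k$. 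I would first try to write $\mathbb{D}_{k+1}=-\mathbb{B}_{k+1}/(1+\mathbb{B}_{k+1})$ and expand, checking that the quadratic term $\mathbb{B}_{k+1}^2$ produces exactly the sign needed in the It\^o correction.
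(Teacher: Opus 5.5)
Your proposal is correct and follows essentially the same route as the paper. You reverse the identity from Theorem \ref{thm:lambda tau}, apply $\log(1+x)\le x-\frac{x^2}{2}+\frac{x^3}{3}$, collect $2\tau Q(\eta_{k+1},\bar\eta_{k+1},\psi_{k+1})$ together with the martingale, telescoping and remainder terms, and pass to the limit via \eqref{martingale1}, \eqref{martingale2}, \eqref{SLLN1}, \eqref{SLLN2} and the ergodic theorem.

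The obstacle you anticipate mostly disappears in the paper's version. Dividing the identity by $\|B_k\|^2$ gives directly
\[
\frac{\|B_{k+1}\|^2}{\|B_k\|^2}=1+2\sigma\cos\psi_k\sin\psi_k\Delta W_k+\sigma^2\cos^2\psi_k\Delta W_k^2+(1+\mathbb{D}_{k+1})\big(2\tau\cos\psi_{k+1}\sin\psi_{k+1}(1-\mathcal{Z}_{k+1})-4\tau\zeta_2\sin^2\psi_{k+1}-\tau^2\mathcal{H}_{k+1}\big),
\]
so the ratio multiplies only the $O(\tau)$ terms. As a result, neither \eqref{Est:Bk3} nor the expansion $\mathbb{D}_{k+1}=-\mathbb{B}_{k+1}/(1+\mathbb{B}_{k+1})$ is needed; only \eqref{Est:Bk+1/k} is used. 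No lower bound on $1+\mathbb{D}_{k+1}$ is required either, because the logarithm inequality holds for every $x>-1$.
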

		\begin{proof}
			Reversing the comparison used in the proof of Theorem \ref{thm:lambda tau}, we obtain
			\begin{align*}
				\|B_{k+1}\|^2 &= \|B_{k}\|^2 \bigg( 1 + \sigma^2\cos^2\psi_{k}\Delta W_{k}^2 + 2\sigma\cos\psi_{k}\sin\psi_{k}\Delta W_{k} \\
				&\qquad \quad + \frac{\|B_{k+1}\|^2}{\|B_{k}\|^2}\left( 2 \tau \cos\psi_{k+1} \sin\psi_{k+1} \left(1- \mathcal{Z}_{k+1}\right) - 4\tau\zeta_2 \sin^2\psi_{k+1} - \tau^2\mathcal{H}_{k+1} \right)
				\bigg). 
			\end{align*}
			By using the inequality $\log(1+x)\leq x-\frac{x^2}{2}+\frac{x^3}{3}$ for $x>-1$ and \eqref{Est:Bk+1/k}, we have 
			\begin{align*}
				& \log\|B_{k+1}\|^2 - \log \|B_{k}\|^2 \\
				= \,& \log\bigg( 1 + \underbrace{2\sigma\cos\psi_{k}\sin\psi_{k}\Delta W_{k}}_{=:\mathbb{Y}_1} + \underbrace{\sigma^2\cos^2\psi_{k}\Delta W_{k}^2}_{=:\mathbb{Y}_2} \\
				&\qquad \quad + \underbrace{\frac{\|B_{k+1}\|^2}{\|B_{k}\|^2}\left( 2 \tau \cos\psi_{k+1} \sin\psi_{k+1} \left(1- \mathcal{Z}_{k+1}\right) - 4\tau\zeta_2 \sin^2\psi_{k+1} - \tau^2\mathcal{H}_{k+1} \right)}_{=:\mathbb{Y}_3}
				\bigg) \\
				\leq \,&2\sigma\cos\psi_{k}\sin\psi_{k}\Delta W_{k} + \sigma^2\cos^2\psi_{k}\Delta W_{k}^2 
				+ 2\tau (1+\mathbb{D}_{k+1})  \cos\psi_{k+1} \sin\psi_{k+1} \left(1- \mathcal{Z}_{k+1}\right)   \\
				& - (1+\mathbb{D}_{k+1}) \left( 4\tau\zeta_2 \sin^2\psi_{k+1} + \tau^2\mathcal{H}_{k+1} \right)
				- \frac{1}{2} 4\sigma^2 \cos^2\psi_{k}\sin^2\psi_{k} \Delta W_{k}^2 \\
				&-\frac{1}{2} (\mathbb{Y}_2+\mathbb{Y}_3)^2 - \mathbb{Y}_1(\mathbb{Y}_2+\mathbb{Y}_3) + \frac{1}{3} (\mathbb{Y}_1+\mathbb{Y}_2+\mathbb{Y}_3)^3 \\ 
				=\,&2\sigma\cos\psi_{k}\sin\psi_{k}\Delta W_{k} + \tau\sigma^2\cos^2\psi_{k} \cos(2\psi_{k}) 
				+ 2\tau \cos\psi_{k+1} \sin\psi_{k+1} \left(1- \mathcal{Z}_{k+1}\right)   \\
				& - 4\tau\zeta_2 \sin^2\psi_{k+1} 
				+ \sigma^2(\Delta W_{k}^2-\tau)\cos^2\psi_{k} \cos(2\psi_{k})  +  \tilde{\mathcal{R}}_{1,k+1}, 
			\end{align*}
			where $|\tilde{\mathcal{R}}_{1,k+1}|
			\leq C (\tau|\Delta W_{k}|+\tau^2) \left(  |\mathcal{Z}_{k+1}|^{8} +1 \right)
			+ |\Delta W_{k}|^3 + \Delta W_{k}^8$.  
			It follows that 
			\begin{align*}
				& \log\|B_{k+1}\|^2 - \log \|B_{k}\|^2 \\
				\leq\,&2\sigma\cos\psi_{k}\sin\psi_{k}\Delta W_{k} 
				+ 2\tau Q\left( \eta_{k+1},\bar{\eta}_{k+1},\psi_{k+1} \right) + \sigma^2(\Delta W_{k}^2-\tau) \cos^2\psi_{k} \cos(2\psi_{k}) 
				\\
				& + \tau\sigma^2\left(\cos^2\psi_{k} \cos(2\psi_{k})-\cos^2\psi_{k+1} \cos(2\psi_{k+1}) \right)  +  \tilde{\mathcal{R}}_{1,k+1}. 
			\end{align*}
			Finally, by using the similar estimates as \eqref{1/N logB}--\eqref{SLLN2}, we obtain 
			\begin{align*}
				\lambda^\tau &\leq \lim\limits_{N\rightarrow\infty}\frac{1}{2N\tau} \sum_{k=0}^{N-1} \left(2\sigma\cos\psi_{k}\sin\psi_{k}\Delta W_{k} 
				+ \sigma^2(\Delta W_{k}^2-\tau) \cos^2\psi_{k} \cos(2\psi_{k}) \right)
				\\
				&\quad + \lim\limits_{N\rightarrow\infty} \frac{1}{N} \sum_{k=1}^{N} Q(\eta_{k},\bar{\eta}_{k},\psi_{k}) 
				+ \lim\limits_{N\rightarrow\infty}\frac{1}{2N\tau} \sum_{k=0}^{N-1}  \tilde{\mathcal{R}}_{1,k+1} \\
				&\leq \mu^\tau(Q) + C\tau^{\frac{1}{2}}, 
			\end{align*}
			which leads to the desired conclusion. 
		\end{proof}
		
		Let  
		\begin{equation}\label{C and c}
			\bm{\mathcal{C}}_{k} = \left(\eta_{k}, \bar{\eta}_{k}, \cos\psi_{k}, \sin\psi_{k}\right)^{\top} \quad \text{and} \quad  \bm{c}(t) = \left(\eta(t), \dot{\eta}(t), \cos \psi(t), \sin \psi(t)\right)^{\top}. 
		\end{equation}
		The following proposition establishes the strong convergence between $\bm{\mathcal{C}}_{k}$ and $\bm{c}(t_k)$ in a finite time interval $[0,T]$ for a given $T=N\tau$ with $N\in\mathbb{N}$, which will be proved in Section \ref{Sec:proof}. 
		\begin{prop}\label{prop:convergence}
			For any $p\in(0,1)$, it holds that  
			\begin{equation*}
				\mathbb{E}\Big[ \sup_{0\leq k \leq N} \|\bm{\mathcal{C}}_{k}-\bm{c}(t_{k}) \|^{2p} \Big] 
				\leq C(\bm{c}(0),T) \tau^p. 
			\end{equation*}
		\end{prop}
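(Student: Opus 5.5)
We compare $\bm{\mathcal{C}}_k$ with $\bm{c}(t_k)$ componentwise, treating the linear oscillator $(\eta_k,\bar\eta_k)$ and the angular part $(\cos\psi_k,\sin\psi_k)$ separately. Passing through $B_k/\|B_k\|$ rather than $\psi_k$ avoids branch issues of the angle modulo $\pi$. The fractional exponent $p\in(0,1)$ signals the strategy: prove a strong error bound of order $\tau^{1/2}$ on a large-probability event where all coefficients are bounded, and control the complement with a Markov/H\"older argument. Raising the errors to the power $2p<2$ then lets us absorb the unbounded random factors.

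\emph{Step 1: the oscillator.} The last two equations of \eqref{VXbeta} are the implicit Euler scheme for the linear SDE $\rmd\eta=\dot\eta\,\rmd t$, $\rmd\dot\eta=-(\kappa_1\eta+2\zeta_1\dot\eta)\rmd t+\sigma\rmd W$ with additive noise. The standard argument applies: write the one-step local error, use Gr\"onwall, and apply the Burkholder--Davis--Gundy inequality to the stochastic convolution. This gives $\mathbb{E}[\sup_{k\le N}\|(\eta_k,\bar\eta_k)-(\eta(t_k),\dot\eta(t_k))\|^{q}]\le C(q,T)\tau^{q/2}$ for every $q\ge2$. In fact the order here is even $1$, but $1/2$ suffices. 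We also record uniform-in-$k$ moment bounds of all orders for both processes, and hence for $\mathcal{Z}_k$ and $\mathcal{Z}(t)=\kappa_2+2\zeta_1\dot\eta+\kappa_1\eta$.

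\emph{Step 2: the angular part via the normalized linear flow.} Set $\hat B_k=B_k/\|B_k\|$ and $\hat B(t)=B(t)/\|B(t)\|$. First we compare the unnormalized $B_k$ with $B(t_k)$, both started from the same $B_0$. The scheme for $B$ in \eqref{VXbeta} is a semi-implicit Euler--Maruyama scheme for the linear SDE with random coefficient $\mathcal{Z}(t)$ and multiplicative noise $\sigma\beta\,\rmd W$. Its error splits into three pieces: the local consistency error, the coefficient perturbation $(\mathcal{Z}_{k+1}-\mathcal{Z}(t_k))\beta_k\tau$ controlled by Step 1, and the drift freezing error $\int_{t_k}^{t_{k+1}}(\mathcal{Z}(s)B(s)-\mathcal{Z}(t_k)B(t_k))\rmd s$. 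Because the coefficient $\mathcal{Z}$ is unbounded, a global Gr\"onwall in $L^2$ fails. We therefore introduce the stopping time $\tau_R=\inf\{t:\|(\eta(t),\dot\eta(t))\|\vee\|(\eta_{\lfloor t/\tau\rfloor},\bar\eta_{\lfloor t/\tau\rfloor})\|>R\}$. On $[0,\tau_R\wedge T]$ the standard mean-square analysis, with BDG applied to the martingale terms, gives $\mathbb{E}[\sup_{t_k\le\tau_R\wedge T}\|B_k-B(t_k)\|^2]\le C e^{CR^2T}\tau$. Next, since $\hat B$ is $1$-Lipschitz away from $0$, we have $\|\hat B_k-\hat B(t_k)\|\le 2\|B_k-B(t_k)\|/\|B(t_k)\|$. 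A lower bound $\inf_{t\le T}\|B(t)\|\ge\|B_0\|e^{-\Lambda_T}$ holds, where $\Lambda_T$ involves $\int_0^T|Q|\,\rmd t$ and the martingale in \eqref{xi}. By \eqref{xi} and Step 1's moment bounds, $\Lambda_T$ has finite exponential moments on $\{\tau_R>T\}$.

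\emph{Step 3: assembling with the exponent $p$.} We split $\Omega$ into $\{\tau_R>T,\ \Lambda_T\le R\}$ and its complement. On the good set we use the bounds above with $R$ fixed. On the complement we use the trivial bound $\|\hat B_k-\hat B(t_k)\|\le2$ together with Step 1's bound raised to $2p$, and apply H\"older's inequality with exponents $1/p$ and $1/(1-p)$. This gives
\begin{equation*}
\mathbb{E}\Big[\sup_{k\le N}\|\bm{\mathcal{C}}_k-\bm{c}(t_k)\|^{2p}\Big]\le \big(Ce^{CR^2T}\tau\big)^{p}+C\,\mathbb{P}(\tau_R\le T)+C\,\mathbb{P}(\Lambda_T>R).
\end{equation*}
Gaussian tails make the last two terms $\le Ce^{-cR^2}$ for the first and exponentially small in $R$ for the second. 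The naive choice $R\sim\sqrt{\log(1/\tau)}$ only yields $\tau^{p-\epsilon}$. To reach exactly $\tau^p$, we instead avoid the $e^{CR^2T}$ factor by running Gr\"onwall pathwise: the multiplier $\exp(C\int_0^T|\mathcal{Z}|\,\rmd t)$ has finite moments of all orders, since $\int_0^T\mathcal{Z}$ is Gaussian. H\"older with $2p<2$ then absorbs this multiplier, with the constant depending on $\bm{c}(0)$ and $T$.

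\textbf{Main obstacle.} The hard step is this pathwise Gr\"onwall with multiplicative noise and unbounded random coefficient while keeping the rate $\tau^{1/2}$. We plan to write the error equation, apply It\^o/discrete product estimates to $e^{-C\int_0^t(1+|\mathcal{Z}|)}\|B_k-B(t_k)\|^2$, and verify that the weighted martingale term is still BDG-controllable.
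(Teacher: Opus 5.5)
Your route is viable and genuinely different from the paper's.

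\textbf{The paper's route.} It never compares unnormalized vectors. Through the expansions in Lemma~\ref{lemma:B1} and Lemma~\ref{lemma:A3} (Appendix~\ref{appendix:B}), it shows that the numerical directions themselves satisfy a perturbed implicit scheme $\bm{\mathcal{C}}_{k+1}=\bm{\mathcal{C}}_{k}+\tau\bm{\varPhi}(\bm{\mathcal{C}}_{k+1})+\bm{\varPsi}(\bm{\mathcal{C}}_{k})\Delta W_k+\varTheta(\bm{\mathcal{C}}_{k+1})+(\tau-\Delta W_k^2)\tilde{\varTheta}(\bm{\mathcal{C}}_k)$ for the SDE of $\bm{c}(t)$. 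It then writes the squared error as $X_n\le\sum_{k<n}G_kX_k+M_n+F_n$, with $G_k$ growing quadratically (with a small prefactor) in $\|\bm{\mathcal{C}}_k\|$ and $\|\bm{c}(t_k)\|$. It closes with the discrete stochastic Gronwall inequality \cite{Kruse2018} plus the exponential integrability \eqref{exp int X}; the restriction $p<1$ enters exactly there.

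\textbf{Your route.} You use the linearity of \eqref{VXbeta} in $B$. The error $B(t_k)-B_k$ obeys a linear recursion whose random coefficient is only linear in $(\eta,\dot\eta)$. Passing to directions costs one factor $\sup_{t\le T}\|B(t)\|^{-1}$, via $\|x/\|x\|-y/\|y\|\|\le 2\|x-y\|/\|y\|$. That factor has moments of all orders by \eqref{xi}: $Q$ grows linearly and the martingale part has bounded quadratic variation. A H\"older step using $p<1$ then absorbs it.

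\textbf{What each buys.} Your route avoids the entire computation of Appendix~\ref{appendix:B}. The price is carrying moments of $\|B(t)\|^{\pm1}$ on $[0,T]$, which the paper's bounded angular variables never require.

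\textbf{On the step you flag as the main obstacle.} It is precisely the content of that stochastic Gronwall lemma, so you can cite it. If you prove it yourself, the mechanism is a \emph{predictable} weight, not a pathwise Gronwall.
\begin{itemize}
\item From $X_{k+1}\le(1+G_k)X_k+\Delta M_k+\Delta F_k$ with $G_k$ $\mathcal{F}_{t_k}$-measurable, set $w_{k+1}=\prod_{j\le k}(1+G_j)^{-1}$. Then $w_{k+1}(1+G_k)=w_k$, and the noise becomes a martingale transform with integrand bounded by $1$.
\item A BDG-plus-absorption argument then gives $\mathbb{E}[\sup_k w_kX_k]\le C\,\mathbb{E}[F_N]\le C\tau$.
\item H\"older with exponents $1/p$ and $1/(1-p)$ against $w_N^{-1}\le e^{\sum_jG_j}$ yields $\tau^p$.
\item $\mathcal{Z}_{k+1}$ in \eqref{VXbeta} is only $\mathcal{F}_{t_{k+1}}$-measurable. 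So build $G_k$ from $\mathcal{Z}_k$, and push the $\Delta W_k$-part of $\mathcal{Z}_{k+1}-\mathcal{Z}_k$ into the martingale.
\item Integrability of $e^{\sum_jG_j}$ comes from Gaussian tails of $\max_k|\mathcal{Z}_k|$, not from Gaussianity of $\int\mathcal{Z}$.
\end{itemize}

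\textbf{On the localization paragraph.} You can drop it; its bookkeeping is also off. On $\{\tau_R>T\}$ the Gronwall factor is $e^{C(1+R)T}$, not $e^{CR^2T}$. With your stated $e^{CR^2T}$, optimizing in $R$ would give only a $T$-dependent power strictly below $p$, not $\tau^{p-\epsilon}$.
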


		A key advantage of our method \eqref{BEM} lies in its ability to preserve the sign of the Lyapunov exponent $\lambda_{0}$ for the single mode solution of the original system, as established in the following corollary. This implies that the numerical single mode solution $(\eta_{k}, \bar{\eta}_{k})$ retains the same almost-sure stability (or instability) as the single mode solution $(\eta(t) , \dot{\eta}(t))$. 
		\begin{coro}
			There exist constants $a,\bar{a}\in(0,1)$ and $b,\bar{b}\in(0,\infty)$ such that for sufficiently small $\tau$ and for all $k\geq0$, \begin{equation}\label{lambda0 and lambdatau}
				\begin{aligned}
					|\lambda_{0} - \lambda^{\tau}|
					&\leq C\tau^{\frac{1}{2}} + b a^{t_{k}}\Gamma_{1}(\eta(0),\dot{\eta}(0),\psi(0))
					+ \bar{b} \bar{a}^{k\tau} \Gamma_{2}(\eta_{0},\bar{\eta}_{0},\psi_{0}) \\
					&\quad + \left| \mathbb{E} \left[Q(\eta(t_k),\dot{\eta}(t_k),\psi(t_k))\right] - \mathbb{E} \left[Q\left( \eta_{k},\bar{\eta}_{k},\psi_{k} \right) \right] \right|. 
				\end{aligned}
			\end{equation}
			In particular, there exist sufficiently large $K>0$ and sufficiently small $\tau_0>0$ such that  $\lambda^{\tau}$ is sign-consistent with the continuous exponent $\lambda_{0}$ whenever $k>K$ and $\tau<\tau_0$. 
		\end{coro}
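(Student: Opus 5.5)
The estimate \eqref{lambda0 and lambdatau} follows from a triangle inequality through the intermediate quantities $\mathbb{E}[Q(\eta(t_k),\dot\eta(t_k),\psi(t_k))]$ and $\mathbb{E}[Q(\eta_k,\bar\eta_k,\psi_k)]$:
\begin{align*}
|\lambda_0-\lambda^\tau| &\le |\lambda_0-\mathbb{E}[Q(\eta(t_k),\dot\eta(t_k),\psi(t_k))]| \\
&\quad + |\mathbb{E}[Q(\eta(t_k),\dot\eta(t_k),\psi(t_k))]-\mathbb{E}[Q(\eta_k,\bar\eta_k,\psi_k)]| \\
&\quad + |\mathbb{E}[Q(\eta_k,\bar\eta_k,\psi_k)]-\mu^\tau(Q)| + |\mu^\tau(Q)-\lambda^\tau|.
\end{align*}
The four terms are handled as follows.
\begin{itemize}
\item The first term is bounded by $b a^{t_k}\Gamma_1(\eta(0),\dot\eta(0),\psi(0))$, using the exponential ergodicity estimate \eqref{ergodic lambda0} together with the Furstenberg--Khasminskii formula $\lambda_0=\nu(Q)$.
\item The second term is kept as it stands in the statement.
\item The third term is bounded by $\bar b\bar a^{k\tau}\Gamma_2(\eta_0,\bar\eta_0,\psi_0)$ by Proposition \ref{prop:ergodicity eta psi}.
\item For the fourth term, Theorems \ref{thm:lambda tau} and \ref{thm:lambda tau2} together give $|\mu^\tau(Q)-\lambda^\tau|\le C\tau^{1/2}$.
\end{itemize}
Summing the four bounds gives \eqref{lambda0 and lambdatau} for every $k\ge0$ and every sufficiently small $\tau$. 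The initial data are the same in both systems, $\bm{c}(0)=\bm{\mathcal C}_0$.

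For sign consistency, assume $\lambda_0\neq0$. It then suffices to make the right-hand side of \eqref{lambda0 and lambdatau} smaller than $|\lambda_0|$. Since $\lambda^\tau$ does not depend on $k$, we are free to choose $k$ in terms of $\tau$. First fix $T$ large enough that $b a^{T}\Gamma_1+\bar b\bar a^{T}\Gamma_2<|\lambda_0|/3$. Then restrict to $\tau$ with $T/\tau\in\mathbb N$, or choose $k$ so that $t_k\in[T,T+1]$; this requires the constants of Proposition \ref{prop:convergence} to be uniform for $T$ in a compact range.

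The main obstacle is showing that the second term tends to $0$ as $\tau\to0$ on the fixed horizon $[0,T]$. The natural tool is Proposition \ref{prop:convergence}. By \eqref{Qh}, $Q(\eta,\dot\eta,\psi)$ is a polynomial in the components of $\bm c=(\eta,\dot\eta,\cos\psi,\sin\psi)$, affine in $(\eta,\dot\eta)$ and with bounded trigonometric coefficients. Hence it is locally Lipschitz:
$$|Q(\bm c)-Q(\bm{\mathcal C})|\le C(1+\|\bm c\|+\|\bm{\mathcal C}\|)\|\bm c-\bm{\mathcal C}\|.$$
Proposition \ref{prop:convergence} only controls moments of order $2p<2$, so I would proceed in two steps. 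First, apply H\"older's inequality with exponents chosen so that $\|\bm c-\bm{\mathcal C}\|$ appears to a power $2p'<2$, for some $p'\in(0,1)$. Second, bound the resulting factor $(1+\|\bm c\|+\|\bm{\mathcal C}\|)$ in high moments, using the Gaussian moments of the Ornstein--Uhlenbeck process $(\eta,\dot\eta)$ and the exponential moment bound \eqref{exp int X} for the discrete chain $(\eta_k,\bar\eta_k)$.

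This yields $|\mathbb{E}Q(\bm c(t_k))-\mathbb{E}Q(\bm{\mathcal C}_k)|\le C(\bm c(0),T)\tau^{p'/2}$. Choosing $\tau_0$ so that this bound and $C\tau_0^{1/2}$ are each below $|\lambda_0|/3$ makes the total less than $|\lambda_0|$, so $\lambda^\tau$ has the same sign as $\lambda_0$ for all $\tau<\tau_0$. The threshold $K$ in the statement corresponds to $T/\tau$.
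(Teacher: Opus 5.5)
Your proposal is correct and follows the paper's proof essentially step for step. It uses the same four-term triangle inequality through \eqref{ergodic lambda0}, Proposition \ref{prop:ergodicity eta psi} and Theorems \ref{thm:lambda tau}--\ref{thm:lambda tau2}, then fixes $T$ and controls the middle term on a finite horizon with Proposition \ref{prop:convergence} and H\"older with $2p\in(1,2)$. Your choice of $k$ with $t_k\in[T,T+1]$, applying the proposition on a slightly longer horizon, is actually more careful than the paper's $N=\lfloor T/\tau\rfloor$, whose index range contains no $k$ with $t_k>T$. One small correction: uniform-in-$\tau$ pointwise moments of $(\eta_k,\bar\eta_k)$ follow from the linear Gaussian recursion \eqref{eta k}, not from \eqref{exp int X}, which only controls $\tau\sum_k\|\bm{\mathcal C}_k\|^2$.
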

		\begin{proof}
			Firstly, \eqref{lambda0 and lambdatau} can be derived by combining Theorems \ref{thm:lambda tau}, \ref{thm:lambda tau2}, and the estimates \eqref{ergodic lambda0}, \eqref{ergodicity for eta}. We next use the strong convergence result to derive the desired conclusion. 
			In fact, if $\lambda_{0}<0$, there exists a constant $T>0$ such that $b a^{t_{k}}\Gamma_{1}(\eta(0),\dot{\eta}(0),\psi(0))
			+ \bar{b} \bar{a}^{k\tau} \Gamma_{2}(\eta_{0},\bar{\eta}_{0},\psi_{0})<-\frac{\lambda_{0}}{4}$ for all $t_k>T$. For such fixed $T$, let $N=\lfloor T/\tau \rfloor$. Then by using Proposition \ref{prop:convergence} with $p>\frac{1}{2}$ and H\"older's inequality, there exists a constant $\tilde{\tau}_{0}>0$ such that when $\tau<\tilde{\tau}_0$, 
			\begin{align*}
				&\sup_{0\leq k \leq N}\left| \mathbb{E} \left[Q(\eta(t_k),\dot{\eta}(t_k),\psi(t_k))\right] - \mathbb{E} \left[Q\left( \eta_{k},\bar{\eta}_{k},\psi_{k} \right) \right] \right| \\
				&\leq \sup_{0\leq k \leq N} C \, \mathbb{E} \left[ \|\bm{\mathcal{C}}_{k}-\bm{c}(t_{k})\| (\|\bm{\mathcal{C}}_{k}\|+\|\bm{c}(t_{k})\|) \right] \\
				&\leq C \sup_{0\leq k \leq N}\left(\mathbb{E}\big[ \|\bm{\mathcal{C}}_{k}-\bm{c}(t_{k}) \|^{2p} \big]\right)^{\frac{1}{2p}} \left(\mathbb{E}\big[ (\|\bm{\mathcal{C}}_{k}\|+\|\bm{c}(t_{k})\|)^{\frac{2p}{2p-1}} \big]\right)^{\frac{2p-1}{2p}} 
				\leq C\tau^{\frac{1}{2}}
				<-\frac{\lambda_{0}}{4}. 
			\end{align*} 
			Finally, take $\tau_0<\tilde{\tau}_0$ sufficiently small such that $C\tau^{\frac{1}{2}}<-\frac{\lambda_{0}}{4}$ when $\tau<\tau_0$. In summary, we obtain that $|\lambda_{0} - \lambda^{\tau}|<-\frac{3\lambda_{0}}{4}$ when $\tau<\tau_0$, which implies $\lambda^{\tau}<0$. Similarly, the case $\lambda_{0}>0$ leads to $\lambda^{\tau}>0$. 
		\end{proof}

		\section{Proofs of propositions in Section \ref{Sec:numerical lambda}}\label{Sec:proof}
		In this section, we present the proofs of Propositions \ref{prop:ergodicity eta psi} and \ref{prop:convergence} in Section \ref{Sec:numerical lambda}.
		\subsection{Proof of Proposition \ref{prop:ergodicity eta psi}} 
		The proof relies on the verification of the Lyapunov condition and the minorization condition.
		\begin{proof}[Proof of Proposition \ref{prop:ergodicity eta psi}]
			We first prove the Lyapunov condition for $\{(\eta_{k}, \bar{\eta}_k, \psi_k)\}_{k\geq0}$. 
			Define the Lyapunov function $\Gamma_{2}:\tilde{\mathcal{N}}\rightarrow[1,\infty)$ as 
			\begin{equation}\label{Gamma2}
				\Gamma_{2}(v,x,\psi) := \kappa_{1} v^2 + x^2 + \delta_{4} vx + M, \qquad \text{for } \  \delta_{4}\in(0,\sqrt{\kappa_{1}}), \ M\geq1. 
			\end{equation}
			It can be seen that $\lim_{(v,x,\psi)\rightarrow\infty}\Gamma_{2}(v,x,\psi)=\infty$.  
			By using \eqref{VXbeta}, Taylor's expansion, Young's inequality, \eqref{Est:inequality}, and taking $0 < \delta_{4} < \min\{ \sqrt{\kappa_{1}} , \, 4\zeta_1\kappa_{1}(\kappa_{1}+2\zeta_1^2)^{-1} \}$, there exists a constant $c_7\in(0,1)$ such that   
			\begin{equation}\label{tilde Gamma}
				\begin{aligned}
					\Gamma_{2}(\eta_{k},\bar{\eta}_{k}+\sigma\Delta W_{k}) - \Gamma_{2}(\eta_{k+1},\bar{\eta}_{k+1}) 
					&\geq \left(4\tau\zeta_1-\tau\delta_{4}-\frac{2\tau\delta_{4}\zeta_1^2}{\kappa_{1}}\right)\bar{\eta}_{k+1}^2 + \frac{\tau\delta_{4}\kappa_{1}}{2} \eta_{k+1}^2 \\  
					&\geq c_7 \tau \Gamma_{2}\left( \eta_{k+1}, \bar{\eta}_{k+1}, \psi_{k+1} \right) 
					- c_7\tau M. 
				\end{aligned}
			\end{equation}
			It follows that 
			\begin{align*}
				&\quad \mathbb{E}\left[ \Gamma_{2}\left(\eta_{k+1},\bar{\eta}_{k+1},\psi_{k+1}\right) \big| \mathcal{F}_{t_k} \right]
				\leq \frac{1}{1+c_{7}\tau} \Gamma_{2}\left(\eta_{k},\bar{\eta}_{k},\psi_{k}\right) + \frac{\sigma^2\tau + c_7\tau M}{(1+c_{7}\tau)}. 
			\end{align*} 
			This leads to the Lyapunov condition for the process $\{(\eta_{k},\bar{\eta}_{k},\psi_{k})\}_{k\geq0}$.

			We next aim to show that, for any $(\eta_{0}, \bar{\eta}_{0}, \psi_{0})^{\top}\in\tilde{\mathcal{N}}$ and $\delta\in(0,1)$, one can find $\Delta W_{i}$, $i=0,1,2$ such that   $(\eta_{3}, \bar{\eta}_{3}, \psi_{3})^{\top} \in \mathcal{B}_{\delta}(0,0,\arctan\frac{1}{\tau})$, where $\mathcal{B}_{\delta}(\bm{x})\subset\mathbb{R}^3$ represents the ball of radius $\delta$ centered at $\bm{x}$. 
			
			Let $\varpi=1+2\tau\zeta_1+\tau^2\kappa_{1}$. Then we can solve from \eqref{VXbeta} that 
			\begin{equation}\label{eta k}
				\begin{pmatrix}
					\eta_{k+1} \\ \bar{\eta}_{k+1}
				\end{pmatrix}
				= \frac{1}{\varpi}\begin{pmatrix}
					1+2\tau\zeta_1 & \tau \\
					-\tau\kappa_{1} & 1
				\end{pmatrix}
				\begin{pmatrix}
					\eta_{k} \\ \bar{\eta}_{k}
				\end{pmatrix}
				+ \frac{1}{\varpi} \begin{pmatrix}
					\sigma\tau \\ \sigma 
				\end{pmatrix}
				\Delta W_{k}
				=:
				\Lambda_{1} \begin{pmatrix}
					\eta_{k} \\ \bar{\eta}_{k}
				\end{pmatrix}
				+ \Lambda_{2} \Delta W_{k}. 
			\end{equation}
			Let the matrix $\Lambda_{3} = (\Lambda_{1}\Lambda_{2} \   \Lambda_{2})\in\mathbb{R}^{2\times2}$. 
			By recursion, we obtain 
			\begin{equation}\label{V3}
				\begin{pmatrix}
					\eta_{3} \\ \bar{\eta}_{3}
				\end{pmatrix}
				=
				\Lambda_{1}^3 \begin{pmatrix}
					\eta_{0} \\ \bar{\eta}_{0}
				\end{pmatrix}
				+ \Lambda_{1}^2 \Lambda_{2} \Delta W_{0}
				+ \Lambda_{3} 
				\begin{pmatrix}
					\Delta W_{1} \\ \Delta W_{2}
				\end{pmatrix}. 
			\end{equation}
			Since the matrix $\Lambda_{3}$ is invertible, for any given $\eta_3^*,\bar{\eta}_3^*, \Delta W_{0}\in\mathbb{R}$, we can choose the following $\Delta W_{1}$ and $\Delta W_{2}$ such that \eqref{V3} holds:  
			\begin{equation}\label{V3*}
				\begin{pmatrix}
					\Delta W_{1} \\ \Delta W_{2}
				\end{pmatrix}
				= \Lambda_{3}^{-1}\begin{pmatrix}
					\eta_{3}^{*} \\ \bar{\eta}_{3}^{*}
				\end{pmatrix}
				-
				\Lambda_{3}^{-1}\Lambda_{1}^3 \begin{pmatrix}
					\eta_{0} \\ \bar{\eta}_{0}
				\end{pmatrix}
				- \Lambda_{3}^{-1}\Lambda_{1}^2 \Lambda_{2} \Delta W_{0}. 
			\end{equation} 
			It follows from \eqref{V3*} that $\Delta W_{1}$ and $\Delta W_{2}$ are linear functions of $\Delta W_{0}$, namely, 
			\begin{equation*}
				\Delta W_{1} = d_1 + d_2 \Delta W_{0} \quad \text{and} \quad \Delta W_{2} = d_3 + d_4 \Delta W_{0}, \qquad \text{with } d_2 = -\frac{2(1+\tau\zeta_1)}{\varpi}, \quad  d_4 = \frac{1}{\varpi}. 
			\end{equation*}
			
			For $\bar{\beta}_{k}$ and $\beta_{k}$ determined by \eqref{VXbeta}, we define 
			\begin{equation*}
				\rho_{k} := \frac{\bar{\beta}_{k}}{\beta_{k}} = \tan \psi_{k}. 
			\end{equation*}
			When $\beta_{k}\neq0$, namely, $|\rho_{k}|<\infty$, it follows from \eqref{VXbeta} that 
			\begin{equation*} 
				\frac{(1+2\tau\zeta_2)\rho_{k+1}}{1-\tau\rho_{k+1}} = \rho_{k}-\tau\left(\kappa_{2}+2\zeta_1\bar{\eta}_{k+1}+\kappa_{1}\eta_{k+1}\right) +\sigma\Delta W_{k}, 
			\end{equation*}
			which, by taking $k=2$ and recalling $\mathcal{Z}_{k}=\kappa_{2} + 2\zeta_1 \bar{\eta}_{k} + \kappa_{1} \eta_{k}$, implies that 
			\begin{equation*}
				\rho_{3} \left( 1+2\tau\zeta_2+\tau\left(\rho_{2}-\tau\mathcal{Z}_{3}+\sigma\Delta W_{2}\right) \right)=  \rho_{2}-\tau\mathcal{Z}_{3}+\sigma\Delta W_{2}. 
			\end{equation*}
			For any $\eta_3^*, \bar{\eta}_3^*, \rho_{3}^{*}\in\mathbb{R}$, we denote 
			\begin{equation*}
				\mathcal{Z}_{k}^{*}=\kappa_{2} + 2\zeta_1 \bar{\eta}_{k}^{*} + \kappa_{1} \eta_{k}^{*} \quad \text{and} \quad \varrho = \frac{\left(1+2\tau\zeta_2\right) \rho_{3}^{*} }{1-\tau\rho_{3}^{*}} + \tau\mathcal{Z}_{3}^{*}. 
			\end{equation*}
			The aim is now reduced to verifying whether there exists a $\Delta W_{0}\in\mathbb{R}$ such that $\rho_{2}+\sigma\Delta W_{2}=\varrho$ for some properly chosen $\varrho$. In the following we address this issue.

			It can be verified that both $\rho_{2}$ and $\Delta W_{2}$ are functions of $\Delta W_{0}$. More precisely, 
			\begin{equation*}
				\rho_{2} = \rho_{2}(\Delta W_{0}) \quad \text{and} \quad \Delta W_{2} = d_3 + d_4 \Delta W_{0}, \quad \text{with } d_4=\varpi^{-1}>0. 
			\end{equation*}
			The idea is to investigate the asymptotic behavior of $\rho_{2}(\Delta W_{0})$ as $\Delta W_{0}\rightarrow\infty$. 
			Denote 
			\begin{equation*}
				e_0 = \tau\rho_0-\tfrac{\tau^2\kappa_{1}}{\varpi}\eta_0+\tau(\tfrac{1}{\varpi}-1)\bar{\eta}_0-\tau^2\kappa_{2} \quad \text{and} \quad \tilde{e}_0 = \tfrac{\tau\kappa_{1}(\tau^2\kappa_{1}-1)}{\varpi^2} \eta_0 - \tfrac{2\tau\zeta_1+2\tau^2\kappa_{1}}{\varpi^2}\bar{\eta}_0 + \tfrac{\sigma}{\varpi} d_1 - \tau\kappa_{2}. 
			\end{equation*}
			It follows that 
			\begin{align*}
				\rho_{2} 
				&= \frac{\tfrac{1}{\tau}e_0+\tfrac{\sigma}{\varpi}\Delta W_{0}+\left(\tilde{e}_0-\tfrac{2\sigma}{\varpi}\Delta W_{0}\right)\left(1+2\tau\zeta_2+e_0+\tfrac{\sigma\tau}{\varpi}\Delta W_{0}\right)}{e_0+\tfrac{\sigma\tau}{\varpi}\Delta W_{0}+\left(1+2\tau\zeta_2+\tau\tilde{e}_0-\tfrac{2\sigma\tau}{\varpi}\Delta W_{0}\right)(1+2\tau\zeta_2+e_0+\tfrac{\sigma\tau}{\varpi}\Delta W_{0})}. 
			\end{align*}
			The above equality holds when $\Delta W_{0} \neq w_1$ and $\Delta W_{0} \neq w_2$, where $w_1, w_2$ are roots of the denominator of the above equality.
			Therefore, 
			\begin{align*}
				\lim\limits_{\Delta W_{0}\rightarrow\pm\infty} \rho_{2} =\frac{1}{\tau}, 
			\end{align*}
			which, together with $\Delta W_{2} = d_3 + d_4 \Delta W_{0}$ and $d_4>0$, leads to    
			\begin{equation*}
				\lim\limits_{\Delta W_{0}\rightarrow\pm\infty} \left(\rho_{2}+\sigma\Delta W_{2}\right) = \pm\infty. 
			\end{equation*}
			This means that for any $\delta\in(0,1)$, $\rho_{3}^*\in\left(\tfrac{1}{\tau}-\delta , \tfrac{1}{\tau}\right)\cup\left(\tfrac{1}{\tau} , \tfrac{1}{\tau}+\delta\right)$, and $\eta_3^*, \bar{\eta}_3^* \in (-\delta,\delta)$, we can choose sufficiently large (or small) $\Delta W_{0}$ such that  
			\begin{equation*}
				\rho_{2}+\sigma\Delta W_{2} = \varrho, \qquad \text{with } \ \varrho = \frac{\left(1+2\tau\zeta_2\right) \rho_{3}^{*} }{1-\tau\rho_{3}^{*}} + \tau\mathcal{Z}_{3}^{*}. 
			\end{equation*}
			With these $\eta_3^*, \bar{\eta}_3^* \in (-\delta,\delta)$ and $\Delta W_{0}$, we can further select $\Delta W_{1}$ and $\Delta W_{2}$ so that \eqref{V3*} is satisfied. 
			In addition, we note that the above analysis excludes the case $\beta_0=0$. In fact, if $\beta_0=0$, by $\beta_1=\beta_0+\tau\bar{\beta}_1$, we obtain $\rho_1=\frac{1}{\tau}$, which returns to the case $\beta_0\neq0$. 
			
			In summary, for any $\delta\in(0,1)$ and $\eta_0, \bar{\eta}_0, \rho_0\in\mathbb{R}$, we can find an open set $\mathcal{B}\subset\mathbb{R}^3$ such that when $\left(\Delta W_{0}, \Delta W_{1}, \Delta W_{2}\right)^{\top}\in\mathcal{B}$ with $\Delta W_{0}\neq w_1, w_2$,  
			it holds that  
			\begin{equation*}
				\left( \eta_3, \bar{\eta}_3 , \rho_3 \right) \in \left(-\delta,\delta\right)^2 \times \left(\tfrac{1}{\tau}-\delta , \tfrac{1}{\tau}+\delta\right). 
			\end{equation*}
			It follows from $\psi_{k}=\arctan \rho_{k}\mod \pi$ that for all $(\eta_0, \bar{\eta}_0, \psi_0)^{\top} \in \tilde{\mathcal{N}}$, 
			\begin{align*}
				&\mathbb{P}\left( \left( \eta_3, \bar{\eta}_3 , \psi_3 \right) \in \mathcal{B}_{\delta}(0,0,\arctan\tfrac{1}{\tau}) \right) 
				\geq
				\mathbb{P}\left( \left( \eta_3, \bar{\eta}_3 , \rho_3 \right) \in \mathcal{B}_{\delta}(0,0,\tfrac{1}{\tau}) \right) \\ 
				&\geq \mathbb{P}\big( \{\left(\Delta W_{0}, \Delta W_{1}, \Delta W_{2}\right)^{\top}\in\mathcal{B}\} \cap \{\Delta W_{0}\neq w_1, w_2\} \big) > 0, 
			\end{align*}
			where we have used the fact that the Wiener measure of any interval is positive and $\mathbb{P}(\{\Delta W_{0}\neq w_1, w_2\})=1$.  
			Furthermore, we write $(\eta_3, \bar{\eta}_3 , \psi_3)^{\top} = \Phi_3\big(\eta_0, \bar{\eta}_0 , \psi_0, \{\Delta W_i\}_{i=0,1,2}\big)$. 
			Since $\Phi_3$ is a rational polynomial with respect to $\{\Delta W_{i}\}_{i=0,1,2}$ and $\Delta W_{i}$ has $C^{\infty}$ density, we can find a compact set $C$ on which the transition probability for the process $(\eta_3, \bar{\eta}_3 , \psi_3)^\top$ has a continuous density when $\tau$ is sufficiently small. 
			This verifies the Assumption 2.1 in \cite{Mattingly2002}, which, by \cite[Lemma 2.3]{Mattingly2002}, implies the minorization condition for the process $\{(\eta_{k}, \bar{\eta}_{k}, \psi_{k})\}_{k\geq0}$.  
			
			Finally, one can verify that the function $Q$ defined by \eqref{Qh} satisfies $|Q|\leq \Gamma_{2}$ for some properly chosen $M$. 
			It follows from \cite[Theorem 2.5]{Mattingly2002} that the process $(\eta_{k},\bar{\eta}_{k},\psi_{k})$ exists a unique stationary measure $\mu^{\tau}$ such that \eqref{ergodicity for eta} holds. 
			The existence of the stationary measure for $(\eta_{k},\bar{\eta}_{k})$ is analogous, and we omit the details for brevity.
		\end{proof}

		\subsection{Proof of Proposition \ref{prop:convergence}}
		In this subsection we prove the strong convergence between $\bm{\mathcal{C}}_{k}=(\eta_{k}, \bar{\eta}_{k}, \cos\psi_{k}, \sin\psi_{k})^{\top}$ and $\bm{c}(t_{k})=(\eta(t_k), \dot{\eta}(t_k), \cos \psi(t_k), \sin \psi(t_k))^{\top}$ in a finite time interval $[0,T]$. Recall that $t_{k}=k\tau$, $k=0,1,2,...,N$ are gridpoints and $\tau$ is the step size.  
		According to \eqref{xi}, It\^o's formula, and Lemma \ref{lemma:A3}, we have $\bm{\mathcal{C}}_{0}=\bm{c}(0)$ and for $k\geq0$, 
		\begin{align*}
			&\bm{c}(t_{k+1}) = \bm{c}(t_{k}) + \int_{t_{k}}^{t_{k+1}} \bm{\varPhi}(\bm{c}(s)) \rmd s + \int_{t_{k}}^{t_{k+1}} \bm{\varPsi}(\bm{c}(s)) \rmd W(s), \\
			&\bm{\mathcal{C}}_{k+1} = \bm{\mathcal{C}}_{k} + \tau \bm{\varPhi}(\bm{\mathcal{C}}_{k+1}) + \bm{\varPsi}(\bm{\mathcal{C}}_{k}) \Delta W_{k} + \varTheta(\bm{\mathcal{C}}_{k+1}) 
			+ (\tau-\Delta W_{k}^2)  \tilde{\varTheta}(\bm{\mathcal{C}}_{k}), 
		\end{align*}
		where $\bm{\varPhi}$, $\bm{\varPsi}$, $\tilde{\varTheta}$ are given by Lemma \ref{lemma:A3}, and $\sup_{0\leq k \leq N}\mathbb{E}\big[\|\varTheta(\bm{\mathcal{C}}_{k})\|^{p}\big] \leq C(p) \tau^{\frac{3p}{2}}$.

		We first present the exponential integrability properties of $\bm{c}(t)$ and $\bm{\mathcal{C}}_{k}$. 
		\begin{lemma}
			For any $T>0$ and sufficiently small $\tau$, there exists a constant $c_8>0$ such that 
			\begin{equation}\label{exp int X}
				\mathbb{E}\left[ e^{c_8 \tau \sum_{k=1}^{N} \|\bm{c}(t_{k})\|^2}\right]
				+ \mathbb{E}\left[ e^{c_8 \tau \sum_{k=1}^{N} \|\bm{\mathcal{C}}_{k}\|^2}\right] 
				\leq C(\bm{c}(0),T), 
			\end{equation}
			where the constant $C(\bm{c}(0),T)$ is independent of $\tau$. 
		\end{lemma}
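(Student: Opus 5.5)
Since $\|\bm{c}(t)\|^2 = \eta(t)^2+\dot\eta(t)^2+1$ and $\|\bm{\mathcal{C}}_k\|^2=\eta_k^2+\bar\eta_k^2+1$, the angular components contribute only the constant $\tau N = T$ to the exponent. The estimate \eqref{exp int X} therefore reduces to exponential integrability of the Gaussian processes $(\eta,\dot\eta)$ and $(\eta_k,\bar\eta_k)$. We prove both via a Lyapunov (energy) argument applied to an exponential functional, using the quadratic Lyapunov function $\Gamma_2$ from \eqref{Gamma2} (and $\Gamma_1$ without the $\psi^2$ term in continuous time). By \eqref{Est:inequality}, $\Gamma_2$ is equivalent to $\eta^2+\bar\eta^2$.

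\textbf{Discrete part.} From \eqref{tilde Gamma},
\[
\Gamma_2(\eta_{k+1},\bar\eta_{k+1})(1+c_7\tau)\le \Gamma_2(\eta_k,\bar\eta_k+\sigma\Delta W_k)+c_7\tau M .
\]
Expanding the right-hand side gives
\[
\Gamma_2(\eta_k,\bar\eta_k)+(2\bar\eta_k+\delta_4\eta_k)\sigma\Delta W_k+\sigma^2\Delta W_k^2 .
\]
Set $G_k=\Gamma_2(\eta_k,\bar\eta_k)$. Then
\[
G_{k+1}+c\tau G_{k+1}\le G_k+\ell_k\Delta W_k+\sigma^2\Delta W_k^2+C\tau,
\qquad |\ell_k|\le C\sqrt{G_k}.
\]
Summing over $k$ gives
\[
c\tau\sum_{k=1}^N G_k\le G_0+\sum_{k}\ell_k\Delta W_k+\sigma^2\sum_k\Delta W_k^2+CT .
\]
Multiply by a small $\epsilon>0$ and exponentiate. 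The martingale $M_N=\sum\epsilon\ell_k\Delta W_k$ is handled by conditioning step by step: since $\Delta W_k$ is independent of $\mathcal F_{t_k}$ and $\ell_k$ is $\mathcal F_{t_k}$-measurable,
\[
\mathbb E\bigl[e^{2\epsilon\ell_k\Delta W_k+2\epsilon\sigma^2\Delta W_k^2}\mid\mathcal F_{t_k}\bigr]\le e^{C\epsilon^2\tau G_k+C\epsilon\tau},
\]
valid once $\epsilon\tau$ is small. Apply Cauchy--Schwarz to separate the stochastic sum from the drift sum, and choose $\epsilon$ so small that $C\epsilon^2\le \tfrac{1}{2}c\epsilon$. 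The term $e^{C\epsilon^2\tau\sum G_k}$ is then absorbed into the left-hand side, i.e. into $e^{c\epsilon\tau\sum G_k}$. Concretely, we work with the supermartingale
\[
\exp\Bigl(\epsilon G_n+\tfrac{c\epsilon}{2}\tau\sum_{k\le n}G_k-Cn\tau\Bigr)
\]
and verify the one-step conditional inequality directly, using the Gaussian moment generating function of $\ell_k\Delta W_k+\sigma^2\Delta W_k^2$. This gives the discrete bound with $c_8\sim c\epsilon/2$ and a constant $e^{\epsilon G_0+CT}$, independent of $\tau$.

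\textbf{Continuous part.} The same argument applies with Itô's formula for $e^{\epsilon\Gamma+\frac{c\epsilon}{2}\int_0^t\Gamma\,\rmd s}$. We use $\mathcal L\Gamma\le -c\Gamma+C$ together with $|\partial_{\dot\eta}\Gamma|^2\sigma^2\le C\Gamma$, so that the quadratic variation term $\tfrac12\epsilon^2\sigma^2|\partial_{\dot\eta}\Gamma|^2$ is absorbed for small $\epsilon$. This gives
\[
\mathbb E\exp\Bigl(\tfrac{c\epsilon}{2}\int_0^T\Gamma\Bigr)\le e^{\epsilon\Gamma(0)+CT}.
\]
The Riemann sum $\tau\sum\|\bm c(t_k)\|^2$ is not the integral, however. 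We bound it instead through the discrete-time version: applying the same conditional-Gaussian estimate to $\Gamma(\eta(t_k),\dot\eta(t_k))$ on each grid step, with the exact Ornstein--Uhlenbeck transition (a Gaussian with explicit mean and covariance), again gives $\mathbb E[e^{\epsilon\Gamma(t_{k+1})}\mid\mathcal F_{t_k}]\le e^{\epsilon(1-c\tau)\Gamma(t_k)+C\epsilon\tau}$. Alternatively, we use Jensen with the explicit Gaussian law.

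\textbf{Main obstacle.} The delicate step is uniformity in $\tau$ of the one-step conditional exponential estimate. The Gaussian moment generating function of $\epsilon(\ell\Delta W+\sigma^2\Delta W^2)$ requires $2\epsilon\sigma^2\tau<1$, and we must check that the resulting factor $(1-2\epsilon\sigma^2\tau)^{-1/2}e^{\epsilon^2\ell^2\tau/(1-2\epsilon\sigma^2\tau)}$ is $\le e^{C\epsilon\tau+C\epsilon^2\tau G_k}$ with constants independent of $\tau$. This determines the admissible $\epsilon$, hence $c_8$, and the threshold for "sufficiently small $\tau$".
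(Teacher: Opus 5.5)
Your proposal is correct. Its discrete half is essentially the paper's argument, while its continuous half takes a genuinely different route.

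\textbf{Discrete part.} Both arguments rest on the one-step inequality \eqref{tilde Gamma} and on the conditional Gaussian moment generating function of $\ell_k\Delta W_k+\sigma^2\Delta W_k^2$. The paper sums first, subtracts the quadratic compensator, and applies the exponential-martingale identity \eqref{exp int 3} together with H\"older. Your one-step supermartingale $\exp(\epsilon G_n+\frac{c\epsilon}{2}\tau\sum_{k\le n}G_k-Cn\tau)$ is an equivalent packaging. When you write it out, note that the compensator $C\epsilon^2\tau G_n$ sits at time $n$, not $n+1$. It is therefore absorbed by the contraction factor $\frac{1+c\tau/2}{1+c\tau}\le 1-\frac{c\tau}{4}$ that multiplies $G_n$, not by the $\tau G_{n+1}$ term.

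\textbf{Continuous part.} The paper writes $\bm{\eta}(t)=e^{At}\bm{\eta}(0)+g(t)$ and applies Fernique's theorem to $g$ in $L^2([0,T];\mathbb R^2)$ to control $\int_0^T\|\bm{\eta}\|^2\,\rmd t$. It then passes from the integral to the Riemann sum at the cost of a term $12\sigma^2T\sup_{[0,T]}|W|^2$, again handled by Fernique. You instead run the same grid-level supermartingale on the exact Ornstein--Uhlenbeck transition. This needs only two facts:
\begin{itemize}
\item $e^{A^\top\tau}Pe^{A\tau}\le e^{-2c\tau}P$, which follows from $A^\top P+PA\le -2cP$, i.e.\ the drift part of the generator computation for $\Gamma_1$;
\item $\|\Sigma_\tau\|\le C\tau$ for the conditional covariance.
\end{itemize}
Together they give $\mathbb E[e^{\epsilon\Gamma(t_{k+1})}\mid\mathcal F_{t_k}]\le e^{\epsilon(1-c\tau/2)\Gamma(t_k)+C\epsilon\tau}$ for small $\epsilon$ and $\tau$.

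\textbf{Comparison.} The paper's route is shorter given Fernique, but its constants are implicit and its $c_8$ shrinks as $T$ grows (through $c_w/(24\sigma^2T)$). Your route treats both processes with one elementary mechanism and handles the Riemann sum directly, with no comparison step. It also yields an explicit $c_8$ independent of $T$, with bound $e^{\epsilon\Gamma(0)+CT}$. The It\^o-formula detour for $\int_0^T\Gamma$ and the ``Jensen'' alternative are unnecessary and can be dropped.
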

		\begin{proof}
			We first derive the exponential integrability properties of $\bm{\eta}(t) := (\eta(t),\dot{\eta}(t))^{\top}$. From the first equation in \eqref{linearization}, we obtain $\rmd \bm{\eta}(t) = A\bm{\eta}(t) \rmd t + B \rmd W(t)$, where $A=\left(\genfrac{}{}{0pt}{}{0 \ \ \ 1}{-\kappa_1 \, -2\zeta_1}\right)$ and $B=(0,\sigma)^{\top}$. 
			Thus $\bm{\eta}(t)=e^{At} \bm{\eta}(0) + g(t)$ where $g(t)=\int_{0}^{t} e^{A(t-s)} B \rmd W(s)$. One can verify that $\mathbb{E}[\|g\|_{L^2([0,T])}^2] = \int_{0}^{T} \mathbb{E} \|g(t)\|^2 \rmd t = \int_{0}^{T} \int_{0}^{t} \|e^{A(t-s)}B\|^2 \rmd s \rmd t < \infty$, which means that $g$ is a centered Gaussian element in the separable Hilbert space $L^2([0,T];\mathbb{R}^2)$. By Fernique's theorem, there exists a constant $c_g>0$ such that $\mathbb{E}[\exp(2c_g \|g\|^2_{L^2})] < \infty$. It follows that 
			\begin{equation*}
				\mathbb{E}\left[ e^{c_g\int_{0}^{T} \|\bm{\eta}(t)\|^2 \rmd t} \right] 
				\leq \mathbb{E}\left[ e^{2c_g\int_{0}^{T}\|e^{At}\bm{\eta}(0)\|^2 \rmd t + 2c_g \|g\|^2_{L^2}} \right] 
				\leq C(\bm{\eta}(0), T)< \infty.
			\end{equation*}   
			In addition, by $\bm{\eta}(t_k) = \bm{\eta}(t) + \int_{t}^{t_k}A\bm{\eta}(r) \rmd r + B (W(t_k)-W(t))$ with $t\in[t_{k-1},t_k]$, we obtain 
			\begin{align*}
				&\tau \sum_{k=1}^{N} \|\bm{\eta}(t_k)\|^2 
				= \sum_{k=1}^{N}\int_{t_{k-1}}^{t_{k}}  \|\bm{\eta}(t_k)\|^2 \rmd t \\
				&\leq \sum_{k=1}^{N}\int_{t_{k-1}}^{t_{k}}  \left(3\|\bm{\eta}(t)\|^2 + 3\tau \int_{t_{k-1}}^{t_{k}} \|A\bm{\eta}(r)\|^2 \rmd r + 6\sigma^2  (|W(t)|^2+|W(t_k)|^2)\right) \rmd t \\
				&\leq C \int_{0}^{T}  \|\bm{\eta}(t)\|^2 \rmd t 
				+ 12\sigma^2T \sup_{t\in[0,T]}|W(t)|^2. 
			\end{align*}
			By Fernique's theorem, there exists a constant $c_w>0$ such that $\mathbb{E}[\exp(c_w \|W\|^2_{C([0,T])})] < \infty$. In summary, there exists a constant $\tilde{c}_8\leq\min\{ c_g/(2C), c_w/(24\sigma^2T) \}$ such that 
			\begin{equation*}
				\mathbb{E}\big[ e^{\tilde{c}_8 \tau \sum_{k=1}^{N} \|\bm{\eta}(t_k)\|^2} \big]  
				\leq \left(\mathbb{E}\big[ e^{2C \tilde{c}_8 \int_{0}^{T}  \|\bm{\eta}(t)\|^2 \rmd t} \big] \right)^{\frac{1}{2}} 
				\left(\mathbb{E}\big[ e^{24\tilde{c}_8 \sigma^2T \|W\|^2_{C([0,T])}} \big] \right)^{\frac{1}{2}} 
				\leq C(\bm{\eta}(0), T)< \infty.
			\end{equation*}

			We next consider the exponential integrability properties of $(\eta_k, \bar{\eta}_k)^{\top}$. 
			Let $\Gamma_{3}(x_1,x_2) := \kappa_{1} x_1^2 + x_2^2 + \delta_{4} x_1x_2$ for $\delta_{4}\in(0,\sqrt{\kappa_1})$. 	
			By Taylor's expansion and Young's inequality, and by taking $\delta_{4}$ sufficiently small, we obtain  
			\begin{align*}
				\Gamma_{3}(\eta_k, \bar{\eta}_k+\sigma\Delta W_k) 
				&\geq  \Gamma_{3}(\eta_{k+1}, \bar{\eta}_{k+1}) 
				+\left( 4\tau\zeta_1 -\tau\delta_{4}(1+\tfrac{2\zeta_1^2}{\kappa_1})\right) \bar{\eta}_{k+1}^2 + \frac{\tau\delta_{4}\kappa_1}{2}\eta_{k+1}^2 \\
				&\geq  \Gamma_{3}(\eta_{k+1}, \bar{\eta}_{k+1}) + C\tau (\eta_{k+1}^2 + \bar{\eta}_{k+1}^2),
			\end{align*}
			which, upon summing from $k=0$ to $N-1$, leads to  
			\begin{equation}\label{exp int 2}
				\epsilon C\tau \sum_{k=1}^{N} (\eta_{k}^2+\bar{\eta}_{k}^2) 
				\leq \epsilon (\kappa_1\eta_{0}^2 +\bar{\eta}_{0}^2 +  \delta_{4}\eta_{0}\bar{\eta}_{0})  
				+ \epsilon  \sum_{k=0}^{N-1} \left(\sigma^2\Delta W_{k}^2 + \delta_{4}\sigma \eta_{k} \Delta W_{k}+  2\sigma \bar{\eta}_{k} \Delta W_{k}  \right),  
			\end{equation}
			where $\epsilon>0$ is a small constant chosen later. 
			For any $\varsigma>0$, it can be verified that \begin{equation}\label{exp int 3}
				\mathbb{E} \big[e^{\varsigma\sum_{k=0}^{N-1} x_{k} \Delta W_{k} - \frac{\varsigma^2\tau}{2}\sum_{k=0}^{N-1} x_{k}^2}\big]=1 \qquad \text{for } \  x_{k}=\eta_{k} \ \text{and} \  \bar{\eta}_{k}. 
			\end{equation}
			By \eqref{exp int 2} and Young's inequality, we have  
			\begin{align*}
				\epsilon C\tau \sum_{k=1}^{N} \left( \eta_{k}^2+\bar{\eta}_{k}^2 \right) 
				&\leq C\epsilon (\eta_{0}^2+\bar{\eta}_{0}^2) 
				+ \epsilon\sigma^2 \sum_{k=0}^{N-1} \Delta W_{k}^2 
				+ \delta_{4}\epsilon\sigma  \sum_{k=0}^{N-1}  \eta_{k} \Delta W_{k} 
				- 2\delta_{4}^2\epsilon^2\sigma^2\tau  \sum_{k=0}^{N-1}  \eta_{k}^2\\
				&\quad + 2\epsilon\sigma  \sum_{k=0}^{N-1}  \bar{\eta}_{k} \Delta W_{k} 
				- 8\epsilon^2\sigma^2\tau  \sum_{k=0}^{N-1}  \bar{\eta}_{k}^2  
				+ 8\epsilon^2\sigma^2\tau  \sum_{k=0}^{N-1}  \bar{\eta}_{k}^2 
				+ 2\delta_{4}^2\epsilon^2\sigma^2\tau  \sum_{k=0}^{N-1}  \eta_{k}^2, 
			\end{align*}
			which, together with H\"older's inequality and \eqref{exp int 3}, implies that for sufficiently small $\epsilon$, 
			\begin{equation}\label{exp int 4}
				\begin{aligned}
					&\mathbb{E}\left[e^{\tau (C\epsilon-2\delta_{4}^2\epsilon^2\sigma^2) \sum_{k=1}^{N} \eta_{k}^2 + \tau (C\epsilon-8\epsilon^2\sigma^2) \sum_{k=1}^{N} \bar{\eta}_{k}^2}\right] \\
					\leq \, & 
					\left(\mathbb{E}\left[ e^{2C(\bar{\eta}_{0}^2 + \eta_{0}^2) 
						+ 2\epsilon\sigma^2 \sum_{k=0}^{N-1} \Delta W_{k}^2} \right]\right)^{\frac{1}{2}} \left(\mathbb{E}  \left[e^{4\delta_{4}\epsilon\sigma  \sum_{k=0}^{N-1}  \eta_{k} \Delta W_{k} 
						- 8\delta_{4}^2\epsilon^2\sigma^2\tau  \sum_{k=0}^{N-1}  \eta_{k}^2}\right]\right)^{\frac{1}{4}}  \\
					& \quad \left(\mathbb{E}  \left[e^{8\epsilon\sigma  \sum_{k=0}^{N-1}  \bar{\eta}_{k} \Delta W_{k} 
						- 32\epsilon^2\sigma^2\tau  \sum_{k=0}^{N-1}  \bar{\eta}_{k}^2}\right]\right)^{\frac{1}{4}} \\
					= \, &  
					\left(\mathbb{E}\left[ e^{2C(\bar{\eta}_{0}^2 + \eta_{0}^2)} \right]\right)^{\frac{1}{2}} \left(\mathbb{E}\left[ e^{2\epsilon\sigma^2 \sum_{k=0}^{N-1} \Delta W_{k}^2} \right]\right)^{\frac{1}{2}}. 
				\end{aligned} 
			\end{equation}
			In view of the independence of $\Delta W_{k}$, $\mathbb{E}[e^{c\Delta W_k^2}] = (1-2c\tau)^{-\frac{1}{2}}$ for $2c\tau<1$, and $(1+\bar{c}\tau)^{1/\tau} \leq e^{\bar{c}}$ for $\tau\in(0,1)$, 
			we obtain that when $8\epsilon\sigma^2\tau<1$, 
			\begin{align*}
				&\left(\mathbb{E} \left[e^{2\epsilon\sigma^2 \sum_{k=0}^{N-1} \Delta W_{k}^2}\right] \right)^{\frac{1}{2}} 
				= \left(\prod_{k=0}^{N-1} \mathbb{E} \left[e^{2\epsilon\sigma^2 \Delta W_{k}^2}\right] \right)^{\frac{1}{2}}
				= (1-4\epsilon\sigma^2\tau)^{-\frac{T}{4\tau}} 
				\leq (1+8\epsilon\sigma^2\tau)^{\frac{T}{4\tau}} 
				\leq e^{2\epsilon\sigma^2T}. 
			\end{align*} 
			Consequently, by taking a sufficiently small $\epsilon$ in \eqref{exp int 4}, there exists a constant $\bar{c}_8$ such that  
			\begin{equation*}
				\mathbb{E}\left[ e^{\bar{c}_8\tau \sum_{k=1}^{N} (\eta_k^2+\bar{\eta}_k^2)}\right] \leq C(\bm{\eta}(0),T). 
			\end{equation*}  
			In addition, in view of 
			\begin{equation*}
				\mathbb{E}\left[e^{\int_{0}^{T} (\cos^2\psi(s)+\sin^2\psi(s)) \rmd s} \right] 
				+ \mathbb{E}\left[ e^{\tau\sum_{k=1}^{N} (\cos^2\psi_k+ \sin^2\psi_k)}\right] 
				= 2e^{T}, 
			\end{equation*} 
			we can take $c_8 = \min\{ \tilde{c}_8, \bar{c}_8 \}$ to yield the desired conclusion. 
		\end{proof}

		With the exponential integrability properties in hand, we are now ready to prove Proposition \ref{prop:convergence}. 
		\begin{proof}[Proof of Proposition \ref{prop:convergence}]
			Let $e_{k} := \bm{c}(t_{k}) - \bm{\mathcal{C}}_{k}$. Then 
			\begin{align*}
				e_{k+1}
				&= e_{k} 
				+ \tau \left( \bm{\varPhi}(\bm{c}(t_{k})) - \bm{\varPhi}(\bm{\mathcal{C}}_{k}) \right) 
				+ \left( \bm{\varPsi}(\bm{c}(t_k)) - \bm{\varPsi}(\bm{\mathcal{C}}_k)\right) \Delta W_k \\
				&\quad + \underbrace{\int_{t_k}^{t_{k+1}} \left( \bm{\varPsi}(\bm{c}(s)) - \bm{\varPsi}(\bm{c}(t_k))\right) \rmd W(s)- (\tau-\Delta W_{k}^2) \tilde{\varTheta}(\bm{\mathcal{C}}_{k})}_{=:\mathcal{E}_k}  \\
				&\quad
				+ \underbrace{\int_{t_k}^{t_{k+1}}  \left(\bm{\varPhi}(\bm{c}(s))-\bm{\varPhi}(\bm{c}(t_{k}))\right) \rmd s 
					+ \tau \left( \bm{\varPhi}(\bm{\mathcal{C}}_{k}) - \bm{\varPhi}(\bm{\mathcal{C}}_{k+1}) \right) 
					- \varTheta(\bm{\mathcal{C}}_{k+1})}_{=:\mathcal{Q}_{k}}. 
			\end{align*}
			For the functions $\bm{\varPhi}$ and $\bm{\varPsi}$ defined by Lemma \ref{lemma:A3}, there exist constants $L_{\bm{\varPhi}}$ and $L_{\bm{\varPsi}}$ such that 
			\begin{align*}
				\|\bm{\varPhi}(\bm{x}) - \bm{\varPhi}(\bm{y})\| 
				&\leq L_{\bm{\varPhi}} \left(1+\|\bm{x}\|+\|\bm{y}\|\right) \|\bm{x} - \bm{y}\|, \qquad \forall \, \bm{x}, \bm{y} \in \mathbb{R}^4, \\
				\|\bm{\varPsi}(\bm{x}) - \bm{\varPsi}(\bm{y})\| 
				&\leq L_{\bm{\varPsi}} \|\bm{x} - \bm{y}\|,   \qquad \forall \, \bm{x}, \bm{y} \in \mathbb{R}^4,
			\end{align*}
			which, together with $(a+b+c+d)^2\leq4(a^2+b^2+c^2+d^2)$ and Young's inequality, leads to 
			\begin{align*}
				\|e_{k+1}\|^2  
				&\leq \left(1+2\tau L_{\bm{\varPhi}}  \left(1+\|\bm{\mathcal{C}}_{k}\|+\|\bm{c}(t_{k})\|\right) + 4L_{\bm{\varPsi}}^2 \Delta W_k^2 \right) \|e_{k}\|^2 
				+ \tau \|e_{k}\|^2 + \tau^{-1} \|\mathcal{Q}_{k}\|^2 
				+ 4\|\mathcal{E}_k\|^2 \\
				&\quad 
				+ 4 \|\mathcal{Q}_{k}\|^2  + 2\left\langle e_{k} , \mathcal{E}_k+\left( \bm{\varPsi}(\bm{c}(t_k)) - \bm{\varPsi}(\bm{\mathcal{C}}_k)\right) \Delta W_k\right\rangle
				+ 4\tau^2 \left\| \bm{\varPhi}(\bm{c}(t_{k})) - \bm{\varPhi}(\bm{\mathcal{C}}_{k}) \right\|^2 \\
				&\leq \left(1+C(\epsilon)\tau+\epsilon\tau \|\bm{\mathcal{C}}_{k}\|^2+\epsilon\tau\|\bm{c}(t_{k})\|^2+4L_{\bm{\varPsi}}^2 \tau\right) \|e_{k}\|^2 \\
				&\quad + 2\left\langle e_{k} , \mathcal{E}_k+\left( \bm{\varPsi}(\bm{c}(t_k)) - \bm{\varPsi}(\bm{\mathcal{C}}_k)\right) \Delta W_k\right\rangle 
				+ 4L_{\bm{\varPsi}}^2 (\Delta W_k^2-\tau) \|e_{k}\|^2 \\
				&\quad + 4\|\mathcal{E}_k\|^2 + (4+\tau^{-1}) \|\mathcal{Q}_{k}\|^2 
				+ C\tau^2\left(1+\|\bm{\mathcal{C}}_{k}\|^2+\|\bm{c}(t_{k})\|^2\right). 
			\end{align*} 
			
			We next temporarily denote $X_{n} = \|e_{n}\|^2$ and 
			\begin{align*}
				M_n &= \sum_{k=0}^{n-1} 2\left\langle e_{k} , \mathcal{E}_k+\left( \bm{\varPsi}(\bm{c}(t_k)) - \bm{\varPsi}(\bm{\mathcal{C}}_k)\right) \Delta W_k\right\rangle 
				+ 4L_{\bm{\varPsi}}^2 \sum_{k=0}^{n-1} (\Delta W_k^2-\tau) \|e_{k}\|^2, \\
				G_k &= C(\epsilon)\tau + \epsilon\tau \|\bm{\mathcal{C}}_{k}\|^2 + \epsilon \tau \|\bm{c}(t_k)\|^2 + 4L_{\bm{\varPsi}}^2 \tau, \\
				F_n &=4\sum_{k=0}^{n-1} \|\mathcal{E}_k\|^2
				+ (4+\tau^{-1}) \sum_{k=0}^{n-1} \|\mathcal{Q}_{k}\|^2 
				+ C\tau^2 \sum_{k=0}^{n-1}  \left(1+\|\bm{\mathcal{C}}_{k}\|^2+\|\bm{c}(t_{k})\|^2\right). 
			\end{align*}
			It follows that 
			\begin{equation*}
				X_n \leq \sum_{k=0}^{n-1} G_k X_k + M_n + F_n, 
			\end{equation*}
			where $M_n$ is a martingale, $X_n$, $G_n$, and $F_n$ are sequences of nonnegative and adapted processes with $\mathbb{E}[X_0]=0$. 
			By using the stochastic Gronwall inequality (see \cite[Theorem 1]{Kruse2018}), we obtain that for any $p\in(0,1)$, $\mu, \nu \in [1,\infty]$ with $\frac{1}{\mu}+\frac{1}{\nu}=1$, and $p\nu<1$
			, 	\begin{equation*}
				\mathbb{E}\Big[ \sup_{0\leq k \leq N} X_k^p \Big] 
				\leq \left(1+\frac{1}{1-\nu p}\right)^{\frac{1}{\nu}} \left(\mathbb{E}\left[ e^{p\mu\sum_{k=0}^{N-1} G_k} \right]\right)^{\frac{1}{\mu}} 
				\Big(\mathbb{E}\Big[ \sup_{0\leq n \leq N} F_n \Big] \Big)^p. 
			\end{equation*}
			We can take $\epsilon$ sufficiently small such that 
			\begin{equation*}
				p\mu\sum_{k=0}^{N-1} G_k 
				\leq 
				\frac{c_8\tau}{4}\sum_{k=1}^{N} \left( \|\bm{\mathcal{C}}_{k}\|^2 + \|\bm{c}(t_k)\|^2 \right) + C(\bm{c}(0),T). 
			\end{equation*}
			Thus by H\"older's inequality and \eqref{exp int X}, we have 
			\begin{align*}
				\mathbb{E}\big[ e^{p\mu\sum_{k=0}^{N-1} G_k} \big] 
				\leq \left(\mathbb{E}\big[e^{2C(\bm{c}(0),T)}\big]\right)^{\frac{1}{2}} \left(\mathbb{E}\big[ e^{c_8\tau\sum_{k=1}^{N}  \|\bm{\mathcal{C}}_{k}\|^2} \big]
				\, \mathbb{E}\big[ e^{c_8\tau\sum_{k=1}^{N}  \|\bm{c}(t_{k})\|^2} \big]\right)^{\frac{1}{4}} 
				\leq C(\bm{c}(0),T). 
			\end{align*}  
			Finally, by the moment boundedness of $\bm{c}(t_{k})$ and $\bm{\mathcal{C}}_{k}$, we obtain 
			\begin{equation*}
				\mathbb{E}\Big[ \sup_{0\leq n \leq N} F_n \Big]
				\leq   
				\sum_{k=0}^{N-1} \left(4\mathbb{E}[\|\mathcal{E}_k\|^2]
				+ (4+\tau^{-1})  \mathbb{E}[\|\mathcal{Q}_{k}\|^2] 
				+ C\tau^2 \mathbb{E}\left[ 1+\|\bm{\mathcal{C}}_{k}\|^2+\|\bm{c}(t_{k})\|^2\right]\right) 
				\leq C\tau, 
			\end{equation*}
			which completes the proof.  
		\end{proof}

		\section{Numerical experiments}\label{Sec:experiment}
		In this section we report several numerical experiments to validate the theoretical results. We begin by computing numerical Lyapunov exponents that quantify the stability of the single mode solution and, for comparison, of the four-dimensional system \eqref{system}.  
		The left panel of Figure~\ref{Fig:lyp} shows the numerical Lyapunov exponents $\lambda^{\tau}$ for the single mode solution as a function of parameters $\sigma$ and $\zeta_2$ (with the remaining parameters fixed as in the caption).
		The exponent increases as $\sigma$ grows, and regions with positive values are observed, indicating the loss of almost-sure stability of the single mode solution and suggesting possible complex dynamics.

		For comparison with the four-dimensional system, we fix $\zeta_2=0.1$ and plot the Lyapunov exponents of the single mode solution and the four-dimensional system as functions of $\sigma$ under the same parameter set; see the right panel of Figure~\ref{Fig:lyp}. 
		The two exponents exhibit qualitatively similar behavior: both increase with $\sigma$ and become positive beyond a threshold. While this does not constitute a quantitative equivalence, it supports the heuristic criterion used in \cite[Conjecture 2.1]{Baxendale24SIAM} that destabilization of the single mode solution may signal the onset of complex dynamics in the four-dimensional system.

		\begin{figure}[tbhp!] 
			\centering
			\includegraphics[scale = 0.35]{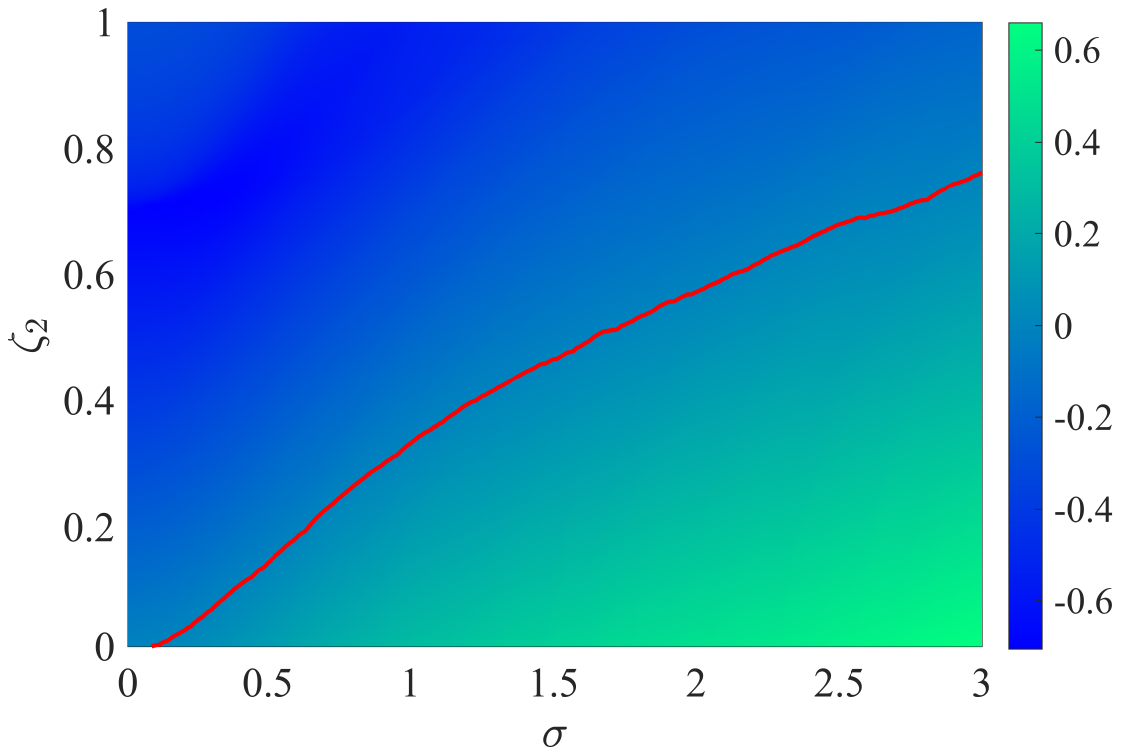}  
			\qquad \qquad 
			\includegraphics[scale = 0.35]{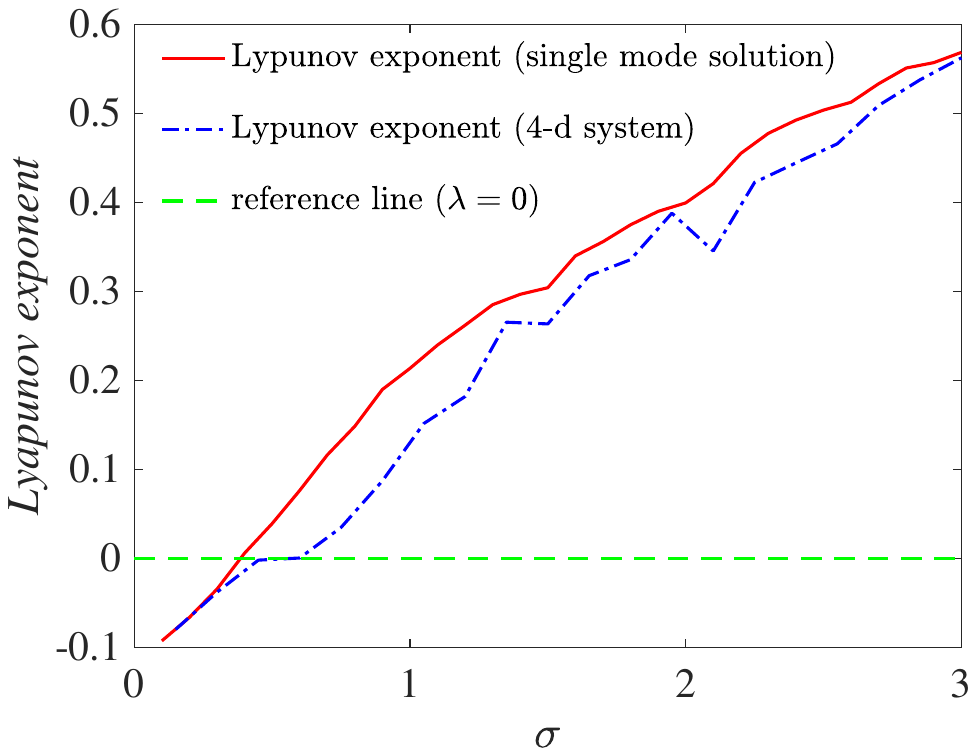}  
			\caption{Left: Numerical Lyapunov exponents for the single mode solution shown as a function of $(\sigma,\zeta_2)$. The red line indicates $\lambda^{\tau}=0$. Right: Numerical Lyapunov exponents for the single mode solution (solid) and the four-dimensional system (dashed) as functions of $\sigma$ with $\zeta_2=0.1$. 
				Parameters:  $T=1000$, $\tau=10^{-4}$,  $\gamma=0.8$, $\kappa_{1}=2$, $\kappa_{2}=0.5$, and $\zeta_{1}=8^{-\frac{1}{2}}$. }
			\label{Fig:lyp}
		\end{figure}

		Next, we fix a representative parameter set with $\sigma=2$ (for which $\lambda^\tau>0$) and examine the evolution of a large ensemble of initial conditions.
		Specifically, we sample $10^5$ initial points and simulate the system up to $T=50$ with step size $\tau=5\times10^{-4}$ under a fixed noise realization.
		Figure~\ref{Fig:attractor} displays (i) a projection of the ensemble onto the coordinates $(V_k,\mathcal V_k,\mathcal U_k)$ and (ii) the corresponding physical configuration represented as point clouds of the block and pendulum positions, illustrating the complex dynamics associated with the loss of stability of the single mode solution.
		
		To assess the numerical robustness of random attractors with respect to discretization, we recompute attractors using several step sizes $\tau=k\times10^{-3}$ with $k=0.5, 1, 2, 4$ under the same noise realization.
		Figure~\ref{Fig:convergence attractor} shows that the projected attractors onto $U_k$-$\mathcal{U}_k$ and $V_k$-$\mathcal{V}_k$ planes are nearly indistinguishable across different $\tau$, indicating the convergence of attractors as the step size is refined.

		Finally, we explore how the long-time dynamics depend on the noise intensity $\sigma$ by constructing bifurcation diagrams.
		For each value of $\sigma$, we compute the corresponding random attractor and project it onto the $\mathcal{U}_k$-axis and $\mathcal{V}_k$-axis, respectively; see Figure~\ref{Fig:bifurcation}.
		For small $\sigma$ (where $\lambda^\tau<0$) the projections collapse to the single point, corresponding to a stable state; for larger $\sigma$ (where $\lambda^\tau>0$) the projections broaden into intervals and then into wider bands, indicating a transition to chaotic dynamics as $\sigma$ increases.

		\begin{figure}[tbhp]
			\centering
			\includegraphics[scale = 0.4]{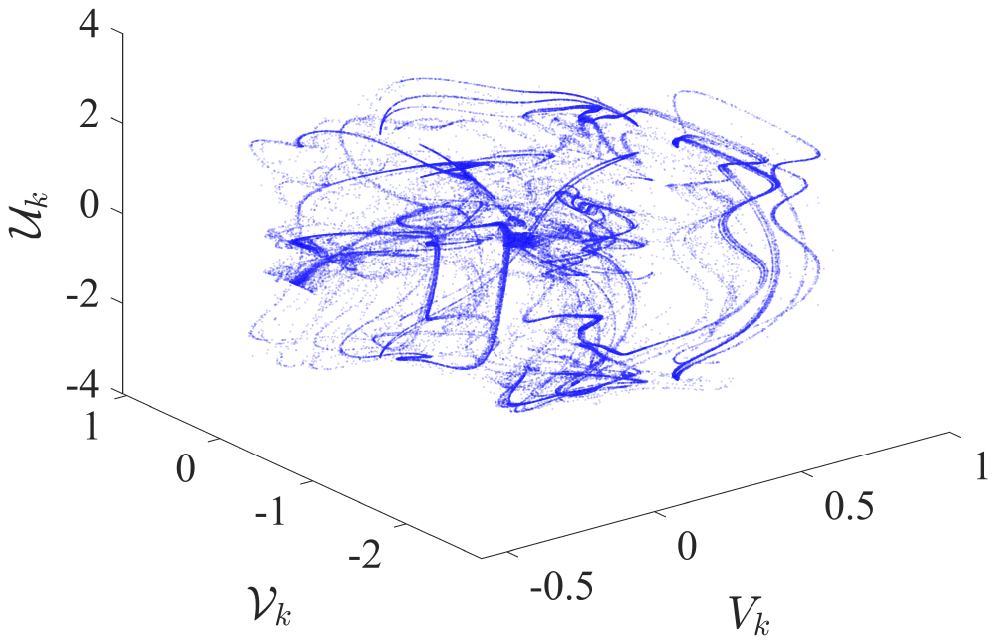}   
			\qquad\qquad 
			\includegraphics[scale = 0.4]{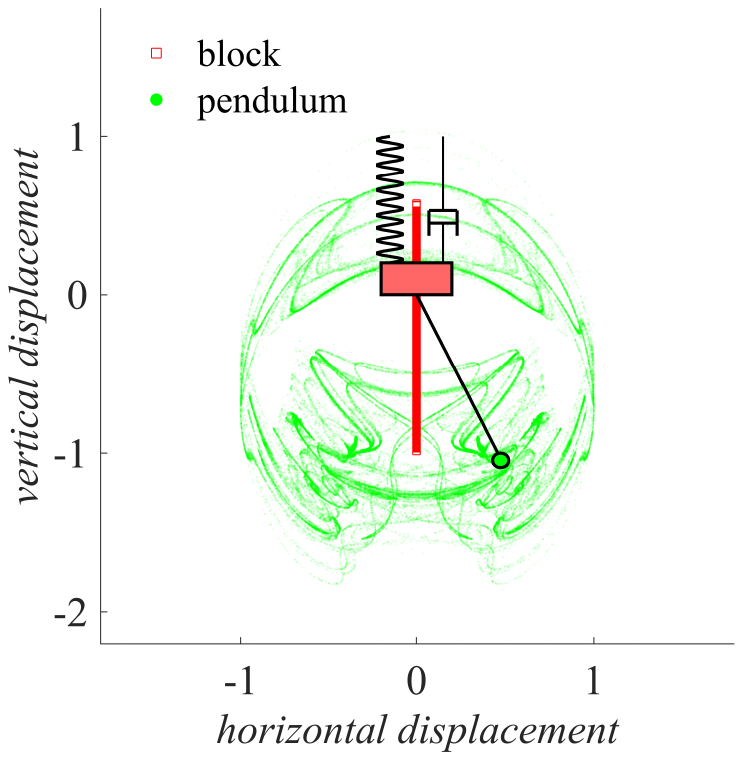} 
			\caption{Left: numerical random attractors projected onto $(V_k,\mathcal V_k,\mathcal U_k)$. 
				Right: point clouds of the block and pendulum positions.   Parameters: $\tau=5\times10^{-4}$, $\gamma=0.8$, $\kappa_{1}=2$, $\kappa_{2}=0.5$, $\zeta_{1}=8^{-\frac{1}{2}}$, $\zeta_{2}=0.1$, and $\sigma=2$.}
			\label{Fig:attractor}
		\end{figure}

		\begin{figure}[tbhp]
			\centering
			\includegraphics[scale=0.42]{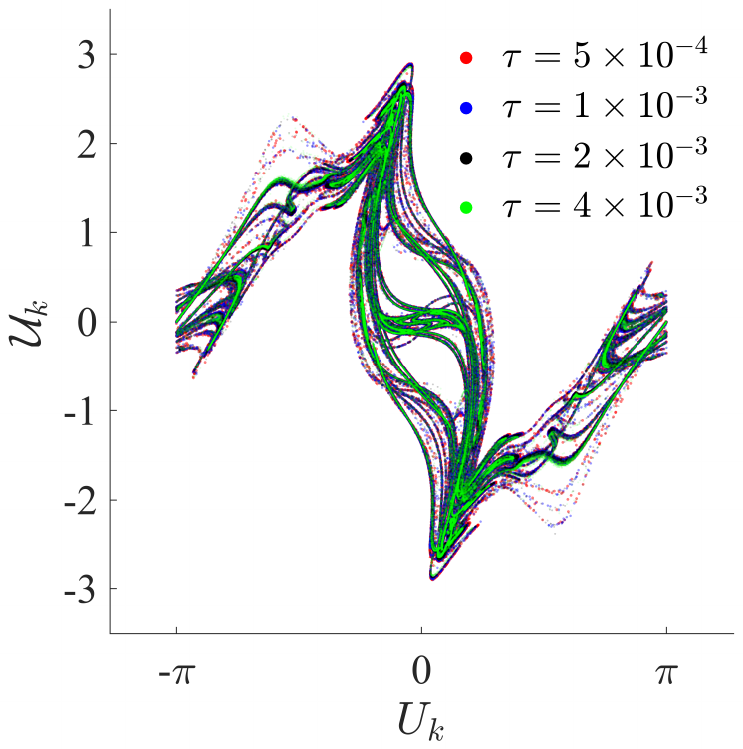}  
			\qquad \qquad 
			\includegraphics[scale=0.42]{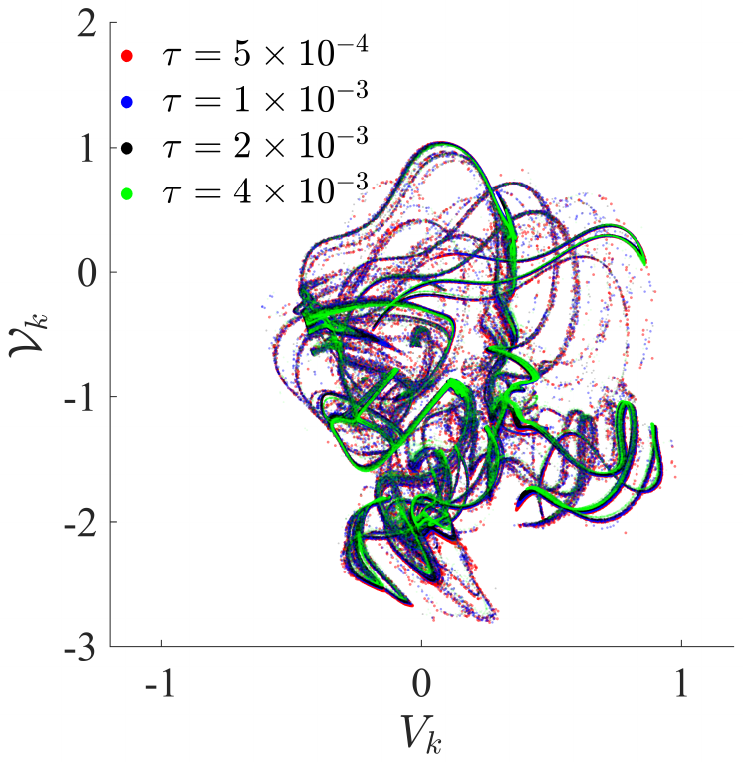}  
			\caption{Numerical random attractors projected onto $U_{k}$-$\mathcal{U}_{k}$ and $V_{k}$-$\mathcal{V}_{k}$ planes for different $\tau$. Other parameters are the same as in Figure \ref{Fig:attractor}. }
			\label{Fig:convergence attractor}
		\end{figure}

		\begin{figure}[tbhp] 
			\centering
			\includegraphics[scale = 0.35]{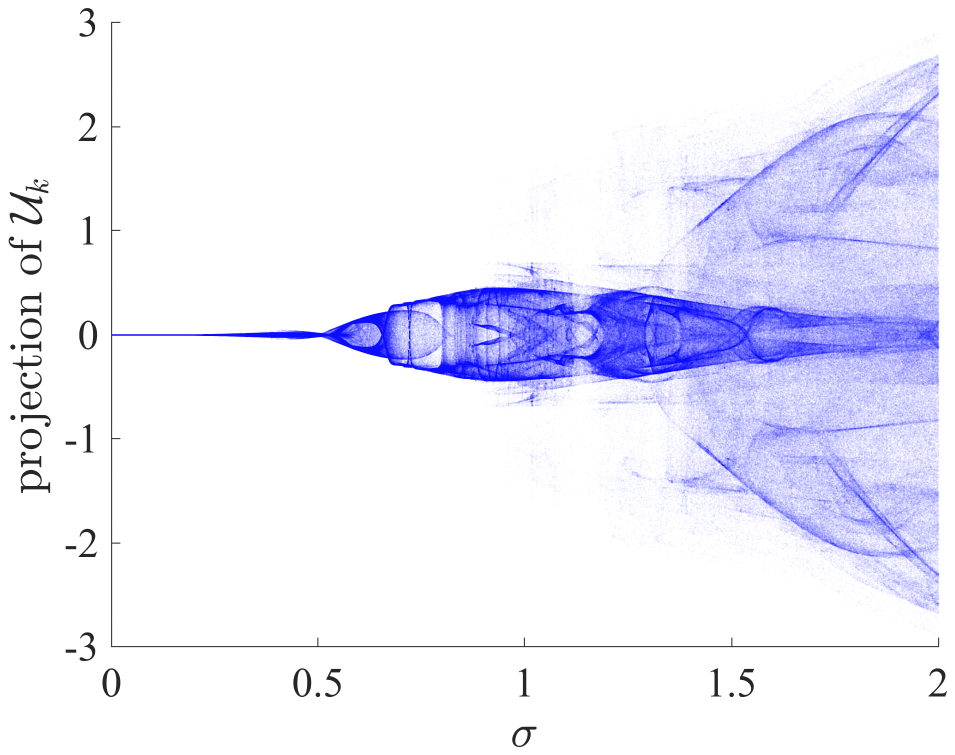} 
			\qquad \qquad 
			\includegraphics[scale = 0.35]{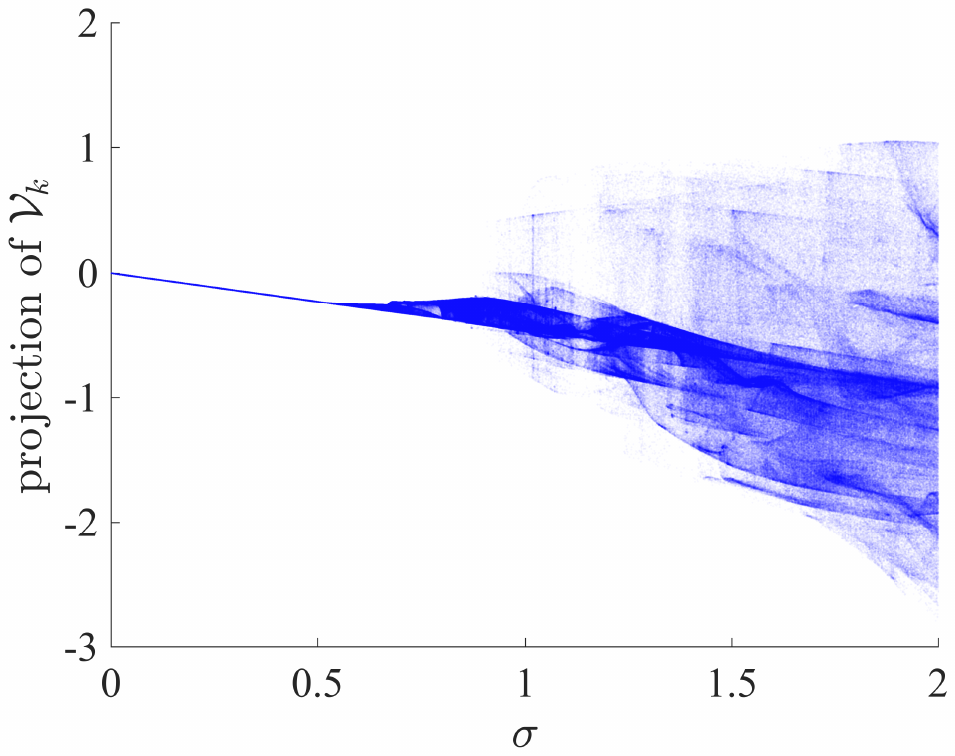}    
			\caption{Bifurcation diagrams for different $\sigma$ obtained from projections of the computed random attractors. 
				Parameters: $\gamma=0.8, \kappa_{1}=2, \kappa_{2}=0.5, \zeta_{1}=8^{-\frac{1}{2}}$, and  $\zeta_{2}=0.1$.}
			\label{Fig:bifurcation}
		\end{figure}

		\appendix
		\section{Estimate of $E_k$ in the proof of Theorem \ref{thm:numerical attractor}}\label{appendix Ektau}
		Recall that $\bar{S}_{k} = \left(1-\gamma\sin^2 \bar{U}_{k}\right)^{-\frac{1}{2}}$. By using \eqref{bar V}, we can rewrite  $E^{\tau}_{k}$ defined by \eqref{Ektau} as
		\begin{equation*}
			E^{\tau}_{k}=\frac{\bar{\mathcal{V}}_{k}^2}{2}  + \frac{\gamma\bar{\mathcal{U}}_{k}^2}{2}  - \bar{\mathcal{V}}_{k}Z_{k} + \frac{\bar{S}_{k}^2Z_{k}^2}{2}  - \gamma\bar{S}_{k}\bar{\mathcal{U}}_{k}Z_{k}\sin \bar{U}_{k}
			+ \frac{\kappa_{1}\bar{V}_{k}^2}{2}  + \delta_{3} \bar{V}_{k}(\bar{\mathcal{V}}_{k}-Z_{k}). 
		\end{equation*}
		It follows from Taylor's expansion and Young's inequality that for sufficiently small $\delta_{3}$, 
		\begin{align*}
			&\quad E^{\tau}_{k} - E^{\tau}_{k+1} \\
			&= 2\tau \zeta_1 \bar{\mathcal{V}}_{k+1}^2 + 2\tau\gamma(\zeta_2+\zeta_1\gamma\sin^2 \bar{U}_{k})\bar{S}_{k}^2 \bar{\mathcal{U}}_{k+1}^2
			+ 4\tau \zeta_1\gamma\bar{S}_{k}\sin \bar{U}_{k}  \bar{\mathcal{V}}_{k+1} \bar{\mathcal{U}}_{k+1} 
			+ \tau \gamma \kappa_{2}\bar{S}_{k}\sin \bar{U}_{k} \bar{\mathcal{U}}_{k+1} \\
			&\quad + \tau \delta_{3} \left(\kappa_{1} \bar{V}_{k+1}^2 - \bar{\mathcal{V}}_{k+1}^2 + 2\zeta_1\bar{V}_{k+1}\bar{\mathcal{V}}_{k+1} -  \gamma \bar{S}_{k}\sin \bar{U}_{k}  \bar{\mathcal{V}}_{k+1}\bar{\mathcal{U}}_{k+1}
			+ 2\zeta_{1}\gamma\bar{S}_{k}\sin \bar{U}_{k}  \bar{V}_{k+1}\bar{\mathcal{U}}_{k+1} \right)
			\\
			&\quad + \tau Z_{k+1} \left( -\kappa_{1} \bar{V}_{k+1} - 2\zeta_1\bar{\mathcal{V}}_{k+1} - 2\zeta_{1}\gamma\bar{S}_{k}\sin \bar{U}_{k}  \bar{\mathcal{U}}_{k+1} 
			- 2\gamma\bar{S}_{k}^3\sin \bar{U}_{k}  (\zeta_{2}+\zeta_{1}\gamma\sin^2 \bar{U}_{k})\bar{\mathcal{U}}_{k+1}\right)   \\
			&\quad + \tau Z_{k+1} \left( -\gamma\bar{S}_{k}^2\sin^2 \bar{U}_{k}  (\kappa_{2}+2\zeta_{1}\bar{\mathcal{V}}_{k+1}+\kappa_{1}\bar{V}_{k+1}) 
			+ \delta_{3}\bar{\mathcal{V}}_{k+1} + \delta_{3}\gamma \bar{S}_{k}\sin \bar{U}_{k} \bar{\mathcal{U}}_{k+1} \right)  \\ 
			&\quad + \alpha\tilde{z}_{k+1}  \left( -\bar{\mathcal{V}}_{k+1} + \bar{S}_{k}^2 Z_{k+1} - \gamma\bar{S}_{k}\sin \bar{U}_{k} \bar{\mathcal{U}}_{k+1} - \delta_{3}\bar{V}_{k+1} \right)  \\ 
			&\quad + \frac{Z_{k+1}^2(\bar{S}_{k}^2-\bar{S}_{k+1}^2)}{2}
			- \bar{\mathcal{U}}_{k+1}Z_{k+1} \left( \gamma\bar{S}_{k}\sin \bar{U}_{k} - \gamma\bar{S}_{k+1}\sin \bar{U}_{k+1} \right) 
			- (\bar{\mathcal{V}}_{k}-\bar{\mathcal{V}}_{k+1})(Z_{k}-Z_{k+1}) \\
			&\quad + \frac{(\bar{\mathcal{V}}_{k}-\bar{\mathcal{V}}_{k+1})^2}{2} + \frac{\gamma(\bar{\mathcal{U}}_{k}-\bar{\mathcal{U}}_{k+1})^2}{2} 
			+ \frac{\kappa_{1}(\bar{V}_{k}-\bar{V}_{k+1})^2}{2} 
			+ \frac{\bar{S}_{k}^2(Z_{k}-Z_{k+1})^2}{2} \\
			&\quad - \gamma\bar{S}_{k}\sin \bar{U}_{k} (\bar{\mathcal{U}}_{k}-\bar{\mathcal{U}}_{k+1})(Z_{k}-Z_{k+1}) 
			+ \delta_{3} (\bar{V}_{k}-\bar{V}_{k+1})\left((\bar{\mathcal{V}}_{k}-\bar{\mathcal{V}}_{k+1})-(Z_{k}-Z_{k+1})\right) \\[2mm]
			&\geq 2\tau \zeta_1 \bar{\mathcal{V}}_{k+1}^2 + 2\tau\gamma(\zeta_2+\zeta_1\gamma\sin^2 \bar{U}_{k})\bar{S}_{k}^2 \bar{\mathcal{U}}_{k+1}^2
			+ 4\tau \zeta_1\gamma\bar{S}_{k}\sin \bar{U}_{k}  \bar{\mathcal{V}}_{k+1} \bar{\mathcal{U}}_{k+1} 
			+ \tau \gamma \kappa_{2}\bar{S}_{k}\sin \bar{U}_{k} \bar{\mathcal{U}}_{k+1} \\
			&\quad + \tau \delta_{3} \left(\kappa_{1} \bar{V}_{k+1}^2 - \bar{\mathcal{V}}_{k+1}^2 + 2\zeta_1\bar{V}_{k+1}\bar{\mathcal{V}}_{k+1} -  \gamma \bar{S}_{k}\sin \bar{U}_{k}  \bar{\mathcal{V}}_{k+1}\bar{\mathcal{U}}_{k+1}
			+ 2\zeta_{1}\gamma\bar{S}_{k}\sin \bar{U}_{k}  \bar{V}_{k+1}\bar{\mathcal{U}}_{k+1} \right)
			\\
			&\quad - \epsilon\tau\left( \bar{V}_{k+1}^2 + \bar{\mathcal{V}}_{k+1}^2 + \bar{\mathcal{U}}_{k+1}^2 \right)  - C(\epsilon)\left(\tau (Z_{k+1}^2+1) +\tau^{-1} |\tilde{z}_{k+1}|^2 \right) 
			+ \frac{Z_{k+1}^2(\bar{S}_{k}^2-\bar{S}_{k+1}^2)}{2} \\
			&\quad
			- \bar{\mathcal{U}}_{k+1}Z_{k+1} \left( \gamma\bar{S}_{k}\sin \bar{U}_{k} - \gamma\bar{S}_{k+1}\sin \bar{U}_{k+1} \right)  \\
			&\geq C\tau \left( \bar{V}_{k+1}^2 + \bar{\mathcal{V}}_{k+1}^2 + \bar{\mathcal{U}}_{k+1}^2 \right) - C\left(\tau (Z_{k+1}^2+1)+ \tau^{-1} |\tilde{z}_{k+1}|^2 \right) + \frac{Z_{k+1}^2(\bar{S}_{k}^2-\bar{S}_{k+1}^2)}{2} \\
			&\quad 
			- \gamma\bar{\mathcal{U}}_{k+1}Z_{k+1} \left( \bar{S}_{k}\sin \bar{U}_{k} - \bar{S}_{k+1}\sin \bar{U}_{k+1} \right), 
		\end{align*}
		in which, by the mean value theorem, $\bar{U}_{k+1} - \bar{U}_{k} = \tau \bar{S}_{k} \bar{\mathcal{U}}_{k+1}$, and Young's inequality,  
		\begin{equation*}
			\frac{\left|Z_{k+1}^2(\bar{S}_{k+1}^2-\bar{S}_{k}^2)\right|}{2} 
			= \frac{\left|\gamma\sin \xi \cos \xi \, Z_{k+1}^2 (\bar{U}_{k+1}-\bar{U}_{k})\right|}{(1-\gamma\sin^2 \xi)^2}   
			\leq \epsilon \tau \bar{\mathcal{U}}_{k+1}^2 + C(\epsilon)\tau  Z_{k+1}^4,  
		\end{equation*}
		where $\xi$ is between $\bar{U}_{k}$ and $\bar{U}_{k+1}$. Similarly, 
		\begin{align*} 
			\left|\gamma\bar{\mathcal{U}}_{k+1}Z_{k+1} (\bar{S}_{k+1}\sin \bar{U}_{k+1}-\bar{S}_{k}\sin \bar{U}_{k}) \right|
			&\leq \frac{\gamma}{(1-\gamma)^{\frac{3}{2}}} |\bar{U}_{k+1}-\bar{U}_{k}| |Z_{k+1}| |\bar{\mathcal{U}}_{k+1}|. 
		\end{align*}
		By the triangle inequality, we obtain 
		\begin{align*}
			\tau|Z_{k+1}| &= \int_{t_{k}}^{t_{k+1}} |Z_{k+1}| \rmd s 
			\leq \int_{t_{k}}^{t_{k+1}} |z(\theta_s \omega)| \rmd s +\int_{t_{k}}^{t_{k+1}} |z(\theta_s \omega)-Z_{k+1}| \rmd s \\
			&\leq \int_{t_{k}}^{t_{k+1}} |z(\theta_s \omega)| \rmd s + \epsilon \tau + \frac{1}{\epsilon^3} \int_{t_{k}}^{t_{k+1}} |z(\theta_s \omega)-Z_{k+1}|^4 \rmd s, 
		\end{align*} 
		which, together with $\bar{U}_{k+1} = \bar{U}_{k} + \tau \bar{S}_{k} \bar{\mathcal{U}}_{k+1}$, $|\bar{U}_{k}|\leq2\pi$, and Young's inequality, leads to  
		\begin{align*}
			&|\bar{U}_{k+1}-\bar{U}_{k}| |\bar{\mathcal{U}}_{k+1}| |Z_{k+1}| \\
			\leq \, & \frac{1}{\tau} |\bar{U}_{k+1}-\bar{U}_{k}| |\bar{\mathcal{U}}_{k+1}| 
			\left( \int_{t_{k}}^{t_{k+1}} |z(\theta_s \omega)| \rmd s + \epsilon \tau + \frac{1}{\epsilon^3} \int_{t_{k}}^{t_{k+1}} |z(\theta_s \omega)-Z_{k+1}|^4 \rmd s \right) \\
			\leq \, & |\bar{S}_k| |\bar{\mathcal{U}}_{k+1}|^2 \int_{t_{k}}^{t_{k+1}} |z(\theta_s \omega)| \rmd s 
			+ \epsilon\tau|\bar{S}_k| |\bar{\mathcal{U}}_{k+1}|^2
			+ \frac{4\pi}{\epsilon^{3}\tau} |\bar{\mathcal{U}}_{k+1}| \int_{t_{k}}^{t_{k+1}} |z(\theta_s \omega)-Z_{k+1}|^4 \rmd s \\
			\leq \, & \frac{|\bar{\mathcal{U}}_{k+1}|^2}{(1-\gamma)^{\frac{1}{2}}} \int_{t_{k}}^{t_{k+1}} |z(\theta_s \omega)| \rmd s 
			+ \frac{\epsilon\tau|\bar{\mathcal{U}}_{k+1}|^2}{(1-\gamma)^{\frac{1}{2}}}
			+ \epsilon\tau |\bar{\mathcal{U}}_{k+1}|^2 
			+ \frac{4\pi^2}{\epsilon^{7}\tau^{2}}  \int_{t_{k}}^{t_{k+1}} |z(\theta_s \omega)-Z_{k+1}|^8 \rmd s. 
		\end{align*} 
		Thus we have 
		\begin{align*}
			&E^{\tau}_{k+1} + C\tau \left( \bar{V}_{k+1}^2 + \bar{\mathcal{V}}_{k+1}^2 + \bar{\mathcal{U}}_{k+1}^2 \right) \\ 
			\leq \,& E^{\tau}_{k}
			+ \tau \epsilon C(\gamma)\bar{\mathcal{U}}_{k+1}^2 
			+ C(\gamma) \bar{\mathcal{U}}_{k+1}^2  \int_{t_{k}}^{t_{k+1}} |z(\theta_s \omega)| \rmd s 
			+ C(\epsilon)\left(\tau (1+Z_{k+1}^4)+ \tau^{-1} |\tilde{z}_{k+1}|^2 \right)  \\ 
			& + C(\epsilon,\gamma) \tau^{-2} \int_{t_{k}}^{t_{k+1}} |z(\theta_s \omega)-Z_{k+1}|^8 \rmd s. 
		\end{align*}
		It follows from \eqref{BEM} that $\bar{\mathcal{U}}_{k+1}^2 \leq C (\bar{V}_{k}^2 + \bar{\mathcal{V}}_{k}^2 + \bar{\mathcal{U}}_{k}^2) + C\Delta W_{k}^2$. 
		In addition, for $\delta_{3}<\sqrt{\kappa_{1}}$, 
		\begin{equation*}
			CE^{\tau}_{k} - C Z_{k}^2
			\leq \bar{V}_{k}^2 + \bar{\mathcal{V}}_{k}^2 + \bar{\mathcal{U}}_{k}^2 
			\leq  CE^{\tau}_{k} + C Z_{k}^2, 
		\end{equation*}
		which, by letting $\epsilon$ sufficiently small, means that there exist constants $c_{5}, \bar{c}_{5}>0$ such that 
		\begin{equation}\label{mathcalG}
			\begin{aligned}
				E^{\tau}_{k+1}  &\leq \left(1-c_{5}\tau+\bar{c}_{5} \int_{t_{k}}^{t_{k+1}} |z(\theta_s \omega)| \rmd s\right) E^{\tau}_{k}  
				+ C \tau^{-2} \int_{t_{k}}^{t_{k+1}} |z(\theta_s \omega)-Z_{k+1}|^8 \rmd s  \\
				&\quad + C \tau \left(1+Z_{k+1}^4+Z_{k}^4+\Delta W_{k}^4\right) + C\int_{t_{k}}^{t_{k+1}} |z(\theta_s \omega)|^2 \rmd s  \\
				&=: \left(1-c_{5}\tau+\bar{c}_{5} \int_{t_{k}}^{t_{k+1}} |z(\theta_s \omega)| \rmd s\right) E^{\tau}_{k} + \mathcal{G}_{k+1} \\
				&\leq e^{-c_{5}\tau+\bar{c}_{5} \int_{t_{k}}^{t_{k+1}} |z(\theta_s \omega)| \rmd s} E^{\tau}_{k} + \mathcal{G}_{k+1}, 
			\end{aligned}
		\end{equation}
		where the inequality $1+x \leq e^{x}$, $\forall \, x\in\mathbb{R}$ is used.

		\section{Proof of Lemma \ref{lemma:Bk}}\label{appendix:A}
		Recall that $B_{k} = (\beta_{k}, \bar{\beta}_{k})^{\top} = \|B_{k}\|(\cos\psi_{k},\sin\psi_{k})^{\top}$.  For convenience, in the following, let $c_{k}=\cos\psi_{k}$ and $s_{k}=\sin\psi_{k}$. 
		\begin{proof}[Proof of Lemma \ref{lemma:Bk}]
			From \eqref{VXbeta}, we have  
			\begin{equation*}
				\beta_{k}=\beta_{k+1} - \tau \bar{\beta}_{k+1}  \quad \text{and} \quad  
				\bar{\beta}_{k}=\bar{\beta}_{k+1}  + \tau \mathcal{Z}_{k+1} \beta_{k+1} 
				+ 2\tau\zeta_2 \bar{\beta}_{k+1} - \sigma\beta_{k}\Delta W_{k} - \tau^2\mathcal{Z}_{k+1}\bar{\beta}_{k+1}, 
			\end{equation*}
			which yields that 
			\begin{equation}\label{Est:Bk2}
				\begin{aligned}
					\frac{\|B_{k}\|^2}{\|B_{k+1}\|^2} 
					&= 1 + \tau^2 s_{k+1}^2 - 2\tau c_{k+1} s_{k+1} (1- \mathcal{Z}_{k+1}) + 4 \tau \zeta_2 s_{k+1}^2 -  \frac{2\sigma\beta_k \bar{\beta}_{k+1}\Delta W_{k}}{\|B_{k+1}\|^2}  
					\\
					&\quad 
					- 2\tau^2 \mathcal{Z}_{k+1} s_{k+1}^2 + \frac{\sigma^2\beta_{k}^2\Delta W_{k}^2}{\|B_{k+1}\|^2} 
					+  \frac{\left(\tau \mathcal{Z}_{k+1} \beta_{k+1} + 2\tau \zeta_2 \bar{\beta}_{k+1}-\tau^2\mathcal{Z}_{k+1}\bar{\beta}_{k+1}\right)^2}{\|B_{k+1}\|^2} \\
					&\quad - \frac{2\sigma\beta_k \Delta W_{k}\left(\tau\mathcal{Z}_{k+1}\beta_{k+1}+2\tau\zeta_2\bar{\beta}_{k+1}-\tau^2\mathcal{Z}_{k+1}\bar{\beta}_{k+1}\right)}{\|B_{k+1}\|^2} \\
					&=:1 - \frac{2\sigma\beta_{k} \bar{\beta}_{k+1} \Delta W_{k}}{\|B_{k+1}\|^2}
					+ \bar{\mathbb{B}}_{k+1}
					=: 1 + \mathbb{B}_{k+1}.  
				\end{aligned}
			\end{equation}
			It follows from $\beta_{k}=\beta_{k+1}-\tau\bar{\beta}_{k+1}$ and Cauchy--Schwarz's inequality that 
			\begin{equation}\label{Est:mathbbB1} 
				|\bar{\mathbb{B}}_{k+1}| \leq C\tau \left(|\mathcal{Z}_{k+1}|^2+1\right) + C\Delta W_{k}^2,  \qquad 
				|\mathbb{B}_{k+1}|
				\leq C\tau \left(|\mathcal{Z}_{k+1}|^2+1\right) + C|\Delta W_{k}| 
				+ C\Delta W_{k}^2.  
			\end{equation}
			which verifies \eqref{Est:Bk1}.  In addition, it follows from \eqref{Est:Bk2} that 
			\begin{equation}\label{Est:beta and barbeta}
				\begin{aligned}
					&\frac{\beta_k \bar{\beta}_{k+1}}{\|B_{k+1}\|^2} 
					= \frac{\beta_k \left( \bar{\beta}_{k}-\tau\mathcal{Z}_{k+1}\beta_{k+1}-2\tau\zeta_2\bar{\beta}_{k+1} + \sigma\beta_{k}\Delta W_{k} + \tau^2\mathcal{Z}_{k+1}\bar{\beta}_{k+1}\right)}{\|B_{k+1}\|^2} \\
					&= \left(c_k s_k + \sigma c_k^2 \Delta W_{k}\right) \frac{\|B_{k}\|^2}{\|B_{k+1}\|^2} 
					- \frac{\tau\left(\beta_{k+1}-\tau\bar{\beta}_{k+1}\right) \left( \mathcal{Z}_{k+1}\beta_{k+1} + \left(2\zeta_2-\tau\mathcal{Z}_{k+1}\right)\bar{\beta}_{k+1}\right)}{\|B_{k+1}\|^2} \\ 
					&= \left(c_k s_k 
					+ \sigma c_k^2 \Delta W_{k}\right) (1+\mathbb{B}_{k+1}) 
					- \tau\left(c_{k+1}-\tau s_{k+1}\right) \left( \mathcal{Z}_{k+1}c_{k+1} + \left(2\zeta_2-\tau\mathcal{Z}_{k+1}\right)s_{k+1}\right). 
				\end{aligned}
			\end{equation}
			Substituting \eqref{Est:beta and barbeta} into \eqref{Est:Bk2} yields 
			\begin{align*}
				\frac{\|B_{k}\|^2}{\|B_{k+1}\|^2} 
				&= 1-2\sigma c_{k}s_{k}\Delta W_{k} + \tilde{\mathbb{B}}_{k+1}, 
			\end{align*}
			where, by \eqref{Est:mathbbB1}, 
			\begin{equation}\label{Est:mathbbB2}
				\begin{aligned}
					|\tilde{\mathbb{B}}_{k+1}| 
					&\leq C|\Delta W_{k}| \left( |\mathbb{B}_{k+1}| + |\Delta W_{k}|(1+|\mathbb{B}_{k+1}|) + \tau(1+\tau)(|\mathcal{Z}_{k+1}|+1) \right) + |\bar{\mathbb{B}}_{k+1}|  \\ 
					&\leq C\left(\Delta W_{k}^2 + \Delta W_{k}^4\right) 
					+ C\tau\left(|\mathcal{Z}_{k+1}|^4+1\right), 
				\end{aligned}
			\end{equation}
			which verifies \eqref{Est:Bk3}. 
			Moreover, by \eqref{VXbeta}, we obtain  
			\begin{equation}\label{mathbbD}
				\begin{aligned}
					\frac{\|B_{k+1}\|^2}{\|B_{k}\|^2} &= \frac{\left(\bar{\beta}_k-\tau\mathcal{Z}_{k+1}\beta_k+\sigma\beta_{k}\Delta W_{k}\right)^2 + \left(\beta_{k}(1+2\tau\zeta_{2})+\tau\left(\bar{\beta}_k-\tau\mathcal{Z}_{k+1}\beta_k+\sigma\beta_{k}\Delta W_{k}\right)\right)^{2}}{(1+2\tau\zeta_{2})^{2}(\beta_k^2+\bar{\beta}_k^2)} \\
					&= \frac{\left( s_{k}^2 + \left( \tau\mathcal{Z}_{k+1}c_k-\sigma c_{k}\Delta W_{k} \right)^2 - 2s_{k}\left( \tau\mathcal{Z}_{k+1}c_k-\sigma c_{k}\Delta W_{k} \right)\right) }{(1+2\tau\zeta_{2})^2} 
					+ c_{k}^2 \\
					&\quad 
					+ \frac{\tau^2\left(s_{k}- \tau\mathcal{Z}_{k+1}c_k+\sigma c_{k}\Delta W_{k} \right)^2}{(1+2\tau\zeta_{2})^2}  
					+ \frac{2\tau c_{k}\left(s_{k}- \tau\mathcal{Z}_{k+1}c_k+\sigma c_{k}\Delta W_{k} \right)}{1+2\tau\zeta_{2}}   \\
					&=: 1+\mathbb{D}_{k+1}, 
				\end{aligned}
			\end{equation}
			where 
			\begin{align*}
				|\mathbb{D}_{k+1}| &\leq \left( (1+2\tau\zeta_{2})^{-2}-1 \right) s_{k}^2 
				+ \left( \tau|\mathcal{Z}_{k+1}| + \sigma|\Delta W_{k}| \right)^2 
				+ 2 \left( \tau|\mathcal{Z}_{k+1}| + \sigma|\Delta W_{k}| \right) \\
				&\quad + \tau^2 \left( 1+\tau|\mathcal{Z}_{k+1}| + \sigma|\Delta W_{k}| \right)^2 
				+ 2\tau \left( 1+\tau|\mathcal{Z}_{k+1}| + \sigma|\Delta W_{k}| \right) \\
				&\leq C\tau \left(|\mathcal{Z}_{k+1}|^2+1\right) + C|\Delta W_{k}| 
				+ C\Delta W_{k}^2, 
			\end{align*}
			which completes the proof. 
		\end{proof}

		\section{Calculation of $c_k$ and $s_k$}\label{appendix:B}
		
		\begin{lemma}\label{lemma:B1}
			For any $k\geq0$, it holds that 
			\begin{align*}
				\frac{\|B_{k}\|-\|B_{k+1}\|}{\|B_{k+1}\|} = \frac{1}{2} \mathbb{B}_{k+1} -\frac{1}{8} \mathbb{B}_{k+1}^2 
				+  \frac{\|B_{k+1}\|^2\mathbb{B}_{k+1} ^3}{4(\|B_{k}\|+\|B_{k+1}\|)^2} 
				-  \frac{\|B_{k+1}\|^4\mathbb{B}_{k+1} ^4}{8(\|B_{k}\|+\|B_{k+1}\|)^4}, 
			\end{align*}
			where $\mathbb{B}_{k+1}$ is given by \eqref{Est:Bk2} and satisfies 
			\begin{equation}\label{mathbbB}
				\mathbb{B}_{k+1} = -2\tau c_{k+1} s_{k+1} (1- \mathcal{Z}_{k+1}) + 4 \tau \zeta_2 s_{k+1}^2 
				- 2 \sigma c_k s_k \Delta W_{k} 
				- \sigma^2c_{k}^2 \Delta W_{k}^2 (1 - 4s_{k}^2)
				+ \mathcal{B}_{1,k+1}, 
			\end{equation}
			with $\sup_{0\leq k \leq N}\mathbb{E}[|\mathcal{B}_{1,k}|^{p}] \leq C(p)\tau^{\frac{3p}{2}}$, $\forall p\geq1$. 
		\end{lemma}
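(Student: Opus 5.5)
The statement has two parts: an exact algebraic identity for $(\|B_k\|-\|B_{k+1}\|)/\|B_{k+1}\|$ in terms of $\mathbb{B}_{k+1}$, and an expansion of $\mathbb{B}_{k+1}$ itself with a remainder of order $\tau^{3/2}$ in every $L^p$. I will treat them separately.

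\emph{The identity.} Set $r=\|B_k\|/\|B_{k+1}\|>0$. By \eqref{Est:Bk2}, $r^2=1+\mathbb{B}_{k+1}$, and the target quantity is $r-1$. Starting from
\[
r-1=\frac{r^2-1}{r+1}=\frac{\mathbb{B}_{k+1}}{r+1},
\]
I would expand $1/(r+1)$ around $r=1$ and iterate. Concretely, write $r-1=\tfrac12\mathbb{B}_{k+1}-\tfrac12(r-1)^2$, which follows from $r^2-1=2(r-1)+(r-1)^2$. Substituting $r-1=\mathbb{B}_{k+1}/(r+1)$ into $(r-1)^2$ gives
\[
(r-1)^2=\frac{\mathbb{B}_{k+1}^2}{(r+1)^2}.
\]
Then expand $\mathbb{B}_{k+1}^2/(r+1)^2$ once more through the same relation, which yields $\tfrac14\mathbb{B}_{k+1}^2$ plus correction terms $\mathbb{B}_{k+1}^3/(r+1)^2$ and $\mathbb{B}_{k+1}^4/(r+1)^4$. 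Finally, rewrite $(r+1)^{-m}=\|B_{k+1}\|^m/(\|B_k\|+\|B_{k+1}\|)^m$. This part is purely algebraic. The only care needed is matching the coefficients, which I would check by clearing denominators and verifying the resulting polynomial identity in $r$ under $r^2=1+\mathbb{B}_{k+1}$.

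\emph{The expansion of $\mathbb{B}_{k+1}$.} In \eqref{Est:Bk2}, $\mathbb{B}_{k+1}$ is the explicit $\tau$-terms plus $\sigma^2\beta_k^2\Delta W_k^2/\|B_{k+1}\|^2$ minus $2\sigma\beta_k\bar\beta_{k+1}\Delta W_k/\|B_{k+1}\|^2$. I would handle the pieces as follows.
\begin{itemize}
\item \emph{Cross term.} Use \eqref{Est:beta and barbeta} to write $\beta_k\bar\beta_{k+1}/\|B_{k+1}\|^2=(c_ks_k+\sigma c_k^2\Delta W_k)(1+\mathbb{B}_{k+1})+O(\tau(1+|\mathcal{Z}_{k+1}|^2))$. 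Then substitute $\mathbb{B}_{k+1}=-2\sigma c_ks_k\Delta W_k+\text{(higher order)}$ inside that factor.
\item \emph{Leading contribution of the cross term.} Multiplying by $-2\sigma\Delta W_k$ produces $-2\sigma c_ks_k\Delta W_k-2\sigma^2c_k^2\Delta W_k^2+4\sigma^2c_k^2s_k^2\Delta W_k^2$, plus terms of type $\Delta W_k^3$, $\tau\Delta W_k(\cdots)$, and so on.
\item \emph{Quadratic term.} Use $\beta_k^2/\|B_{k+1}\|^2=c_k^2(1+\mathbb{B}_{k+1})$, so this term contributes $\sigma^2c_k^2\Delta W_k^2+O(|\Delta W_k|^3)$.
\item \emph{Combination.} Adding the second and third pieces gives $-\sigma^2c_k^2\Delta W_k^2(1-4s_k^2)$, which is the form in \eqref{mathbbB}.
\item \emph{Remaining terms.} The leftover $\tau^2$ pieces (for example $\tau^2s_{k+1}^2$, $\tau^2\mathcal{Z}_{k+1}s_{k+1}^2$, and $\tau^2\mathcal{Z}_{k+1}^2$) all go into $\mathcal{B}_{1,k+1}$.
\end{itemize}

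\emph{Moment bound on $\mathcal{B}_{1,k+1}$.} Every term in $\mathcal{B}_{1,k+1}$ is a product of powers of $|\Delta W_k|$, $\tau$, $|\mathcal{Z}_{k+1}|$ and bounded trigonometric factors, with total order at least $\tau^{3/2}$. Here $|\Delta W_k|$ counts as $\tau^{1/2}$, so the typical terms are $|\Delta W_k|^3$, $\tau|\Delta W_k|$ and $\tau^2$. Powers of $|\mathbb{B}_{k+1}|$ are controlled by \eqref{Est:mathbbB1}. Then Hölder's inequality, Gaussian moments $\mathbb{E}|\Delta W_k|^q=C(q)\tau^{q/2}$, and the uniform polynomial moment bounds for $(\eta_k,\bar\eta_k)$ (from the Lyapunov estimate in the proof of Proposition \ref{prop:ergodicity eta psi}, or from \eqref{exp int X}) give $\sup_k\mathbb{E}|\mathcal{B}_{1,k}|^p\le C(p)\tau^{3p/2}$.

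\emph{Main obstacle.} The hard step is the bookkeeping in the cross term. The implicit dependence of $\mathbb{B}_{k+1}$ on itself must be resolved by one substitution. The point to verify carefully is that every $\Delta W_k^2$ contribution has been collected, so that what remains is genuinely $O(\tau^{3/2})$ in $L^p$ rather than $O(\tau)$.
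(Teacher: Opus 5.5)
Your proposal is correct and follows essentially the same route as the paper: the identity comes from iterating $\frac{\|B_{k+1}\|}{\|B_k\|+\|B_{k+1}\|}=\frac12\bigl(1-\frac{\mathbb{B}_{k+1}\|B_{k+1}\|^2}{(\|B_k\|+\|B_{k+1}\|)^2}\bigr)$, which is your relation $1/(r+1)=\frac12\bigl(1-\mathbb{B}_{k+1}/(r+1)^2\bigr)$, and the expansion of $\mathbb{B}_{k+1}$ comes from inserting \eqref{Est:beta and barbeta} into \eqref{Est:Bk2}. The ``higher order'' remainder you substitute for $\mathbb{B}_{k+1}+2\sigma c_ks_k\Delta W_k$ is exactly $\tilde{\mathbb{B}}_{k+1}$ from \eqref{Est:Bk3}. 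Its $O(\tau)$ size, rather than the cruder $O(\tau^{1/2})$ from \eqref{Est:mathbbB1}, is what puts the product with $\Delta W_k$ at order $\tau^{3/2}$, as in the paper.
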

		\begin{proof}
			It follows from \eqref{Est:Bk2} that $\|B_{k}\|^2 \|B_{k+1}\|^{-2} = 1+\mathbb{B}_{k+1}$. Thus 
			\begin{equation*}
				\frac{\|B_{k+1}\|}{\|B_{k}\|+\|B_{k+1}\|} 
				= \frac{1}{2} + \frac{\|B_{k+1}\|^2-\|B_{k}\|^2}{2(\|B_{k}\|+\|B_{k+1}\|)^2} 
				= \frac{1}{2} \left(1-  \frac{\mathbb{B}_{k+1}\|B_{k+1}\|^2}{\left(\|B_{k}\|+\|B_{k+1}\|\right)^2}\right),
			\end{equation*}
			which leads to 
			\begin{align*}
				\frac{\|B_{k}\|-\|B_{k+1}\|}{\|B_{k+1}\|} 
				&= \frac{\mathbb{B}_{k+1}}{2} -  \frac{\mathbb{B}_{k+1}^2\|B_{k+1}\|^2}{2\left(\|B_{k}\|+\|B_{k+1}\|\right)^2}
				= \frac{\mathbb{B}_{k+1}}{2} - \frac{\mathbb{B}_{k+1}^2}{8}  \left(1 -  \frac{\mathbb{B}_{k+1}\|B_{k+1}\|^2}{\left(\|B_{k}\|+\|B_{k+1}\|\right)^2}\right)^2 \\
				&= \frac{1}{2} \mathbb{B}_{k+1} -\frac{1}{8} \mathbb{B}_{k+1}^2 
				+  \frac{\|B_{k+1}\|^2\mathbb{B}_{k+1} ^3}{4(\|B_{k}\|+\|B_{k+1}\|)^2} 
				-  \frac{\|B_{k+1}\|^4\mathbb{B}_{k+1} ^4}{8(\|B_{k}\|+\|B_{k+1}\|)^4}. 
			\end{align*} 
			Next, we estimate $\mathbb{B}_{k+1}$. 
			Substituting \eqref{Est:Bk1} and \eqref{Est:Bk3} into \eqref{Est:beta and barbeta}, we have  
			\begin{align*}
				\frac{\beta_k \bar{\beta}_{k+1}}{\|B_{k+1}\|^2} 
				&= \left(c_k s_k + \sigma c_k^2 \Delta W_{k}\right) \frac{\|B_{k}\|^2}{\|B_{k+1}\|^2} \\
				&\quad - \frac{\left(\beta_{k+1}-\tau\bar{\beta}_{k+1}\right) \left( \tau\mathcal{Z}_{k+1}\beta_{k+1}+2\tau\zeta_2\bar{\beta}_{k+1} - \tau^2\mathcal{Z}_{k+1}\bar{\beta}_{k+1}\right)}{\|B_{k+1}\|^2} \\ 
				&= c_k s_k \left(1-2\sigma c_{k}s_{k}\Delta W_{k} + \tilde{\mathbb{B}}_{k+1}\right) 
				+ \sigma c_k^2 \Delta W_{k} (1+\mathbb{B}_{k+1}) \\
				&\quad - \left(c_{k+1}-\tau s_{k+1}\right) \left( \tau\mathcal{Z}_{k+1}c_{k+1}+2\tau\zeta_2s_{k+1} - \tau^2\mathcal{Z}_{k+1}s_{k+1}\right), 
			\end{align*}
			which, together with \eqref{Est:Bk2}, implies that 
			\begin{align*}
				\mathbb{B}_{k+1} 
				&= -2\tau c_{k+1} s_{k+1} (1- \mathcal{Z}_{k+1}) + 4 \tau \zeta_2 s_{k+1}^2 - 2 \sigma \Delta W_{k} \left( c_k s_k -2\sigma c_{k}^2s_{k}^2\Delta W_{k}  
				+ \sigma c_k^2 \Delta W_{k} \right) \\
				& \quad + \frac{\sigma^2\beta_{k}^2\Delta W_{k}^2}{\|B_{k}\|^2}(1+\mathbb{B}_{k+1}) +\tau^2 s_{k+1}^2 
				+ \left(\tau \mathcal{Z}_{k+1} c_{k+1} + 2\tau \zeta_2 s_{k+1}- \tau^2\mathcal{Z}_{k+1}s_{k+1}\right)^2 \\ 
				& \quad - 2\tau^2\mathcal{Z}_{k+1}s_{k+1}^2 
				- 2 \sigma \Delta W_{k} \left( c_k s_k \tilde{\mathbb{B}}_{k+1}+\sigma c_k^2\Delta W_{k}\mathbb{B}_{k+1}  \right) \\
				&\quad -2\sigma\Delta W_{k}\left(c_{k+1} - \tau s_{k+1}\right) \left(\tau\mathcal{Z}_{k+1}c_{k+1}+2\tau\zeta_2s_{k+1}-\tau^2\mathcal{Z}_{k+1}s_{k+1}\right) \\
				&=: -2\tau c_{k+1} s_{k+1} (1- \mathcal{Z}_{k+1}) + 4 \tau \zeta_2 s_{k+1}^2 - 2 \sigma c_k s_k \Delta W_{k} 
				- \sigma^2 c_{k}^2 \Delta W_{k}^2 ( 1 -4s_{k}^2) 
				+ \mathcal{B}_{1,k+1}, 
			\end{align*}
			where 
			\begin{align*}
				|\mathcal{B}_{1,k+1}|
				\leq C\Delta W_{k}^2 |\mathbb{B}_{k+1}| + 
				C\left(\tau^2+\tau|\Delta W_{k}|\right) \left(1+\mathcal{Z}_{k+1}^2\right)
				+ C|\Delta W_{k}| \left(  |\tilde{\mathbb{B}}_{k+1}| + |\Delta W_{k}| |\mathbb{B}_{k+1}| \right). 
			\end{align*}
			In view of \eqref{Est:mathbbB1}, \eqref{Est:mathbbB2}, and the moment boundedness of $|\mathcal{Z}_{k}|$, we obtain that for any $p\geq1$, 
			\begin{equation*}
				\sup\limits_{0\leq k \leq N}\mathbb{E}[|\mathcal{B}_{1,k}|^{p}] \leq C(p)\tau^{\frac{3p}{2}}, 
			\end{equation*}
			which completes the proof. 
		\end{proof}

		By using Lemma \ref{lemma:B1}, we next derive the equation satisfied by $\bm{\mathcal{C}}_{k} = \left(\eta_{k}, \bar{\eta}_{k}, c_{k}, s_{k}\right)^{\top}$. 
		\begin{lemma}\label{lemma:A3}
			For any $k\geq0$, it holds that 
			\begin{equation*}
				\bm{\mathcal{C}}_{k+1} = \bm{\mathcal{C}}_{k} + \tau \bm{\varPhi}(\bm{\mathcal{C}}_{k+1}) + \bm{\varPsi}(\bm{\mathcal{C}}_{k}) \Delta W_{k} + \varTheta(\bm{\mathcal{C}}_{k+1}) 
				+ (\tau-\Delta W_{k}^2) \tilde{\varTheta}(\bm{\mathcal{C}}_{k}), 
			\end{equation*} 
			where $\bm{\varPsi}(\bm{\mathcal{C}}_{k})=\left(0, \, \sigma, \, - \sigma c_{k}^2 s_{k}, \, \sigma c_{k}^3\right)^{\top}$, $\tilde{\varTheta}(\bm{\mathcal{C}}_{k})=\big(0, \, 0, \, \tfrac{\sigma^2}{2} c_{k}^5-\sigma^2c_{k}^3s_{k}^2, \, \tfrac{3\sigma^2}{2} c_{k}^4s_{k}\big)^{\top}$,  
			\begin{align*}
				\bm{\varPhi}(v,x,c,s)=\begin{pmatrix}
					x \\[1mm] -\kappa_{1} v - 2\zeta_2 x \\[1mm] 
					s -c^2 s (1- \kappa_{2}-2\zeta_1x-\kappa_{1}v) + 2\zeta_2 cs^2 + \sigma^2c^3s^2-\tfrac{\sigma^2}{2} c^5 \\[1mm]
					-c + c^3(1- \kappa_{2}-2\zeta_1x-\kappa_{1}v) - 2\zeta_2c^2s 
					-\frac{3\sigma^2}{2} c^4 s
				\end{pmatrix}, 
			\end{align*} 
			and $\varTheta(\bm{\mathcal{C}}_{k+1})=\big(0,\,0,\,\tilde{\mathcal{C}}_{k+1},\,\tilde{\mathcal{S}}_{k+1}\big)^{\top}$ with $\sup_{0\leq k \leq N}\mathbb{E}\big[\|\varTheta(\bm{\mathcal{C}}_{k})\|^{p}\big] \leq C(p) \tau^{\frac{3p}{2}}$. 
		\end{lemma}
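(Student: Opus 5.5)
The plan is to obtain the recursion for $c_{k}=\cos\psi_k$ and $s_k=\sin\psi_k$ directly from the definition $c_k=\beta_k/\|B_k\|$, $s_k=\bar\beta_k/\|B_k\|$. The $(\eta_k,\bar\eta_k)$ components need no expansion: they are read off from \eqref{VXbeta}, i.e. $\eta_{k+1}=\eta_k+\tau\bar\eta_{k+1}$ and $\bar\eta_{k+1}=\bar\eta_k-\tau(\kappa_1\eta_{k+1}+2\zeta_1\bar\eta_{k+1})+\sigma\Delta W_k$. These match the first two components of $\bm{\varPhi}$ and $\bm{\varPsi}$, with $\varTheta$ and $\tilde\varTheta$ vanishing there. (The second component of $\bm{\varPhi}$ as printed should read $-\kappa_1v-2\zeta_1x$; I would adjust it accordingly.)

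\textbf{Exact identities for $c_{k+1}$ and $s_{k+1}$.} Dividing $\beta_{k+1}=\beta_k+\tau\bar\beta_{k+1}$ by $\|B_{k+1}\|$ gives
$$c_{k+1}=c_k\frac{\|B_k\|}{\|B_{k+1}\|}+\tau s_{k+1}.$$
Similarly, \eqref{VXbeta} gives
$$s_{k+1}=\bigl(s_k-\tau\mathcal Z_{k+1}c_k+\sigma c_k\Delta W_k\bigr)\frac{\|B_k\|}{\|B_{k+1}\|}-2\tau\zeta_2 s_{k+1}.$$
Writing $\|B_k\|/\|B_{k+1}\|=1+r_{k+1}$, Lemma~\ref{lemma:B1} expresses $r_{k+1}$ as $\tfrac12\mathbb B_{k+1}-\tfrac18\mathbb B_{k+1}^2$ plus terms that are $O(\mathbb B_{k+1}^3)$. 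The explicit form \eqref{mathbbB} of $\mathbb B_{k+1}$ then gives
$$r_{k+1}=-\tau c_{k+1}s_{k+1}(1-\mathcal Z_{k+1})+2\tau\zeta_2s_{k+1}^2-\sigma c_ks_k\Delta W_k-\tfrac12\sigma^2c_k^2(1-4s_k^2)\Delta W_k^2-\tfrac12\sigma^2c_k^2s_k^2\Delta W_k^2+\text{(remainder)}.$$
All dropped terms are bounded in $L^p$ by $C\tau^{3/2}$. This uses $\sup_k\mathbb E|\mathcal B_{1,k}|^p\le C\tau^{3p/2}$, \eqref{Est:Bk1} for the cubic and quartic terms, and the uniform moment bounds on $\mathcal Z_k$, which follow from \eqref{exp int X} or the Lyapunov estimate for $\Gamma_2$.

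\textbf{Sorting the expansion.} Substituting into the two identities, I split every term into three groups.
\begin{enumerate}
\item Drift terms of order $\tau$. I write them at index $k+1$; where they naturally appear at index $k$, I replace $c_k,s_k$ by $c_{k+1},s_{k+1}$ at cost $O(\tau|\Delta W_k|+\tau^2)$, which is of order $\tau^{3/2}$ in $L^p$.
\item The $\Delta W_k$ terms, evaluated at index $k$. These are $c_k(-\sigma c_ks_k)=-\sigma c_k^2s_k$ and $\sigma c_k-\sigma c_ks_k^2=\sigma c_k^3$, which are exactly $\bm{\varPsi}$.
\item The $\Delta W_k^2$ terms, evaluated at index $k$. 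These include the products $(\sigma c_k\Delta W_k)(-\sigma c_ks_k\Delta W_k)$ coming from the $s$-equation. I write each such term as $\Delta W_k^2=\tau-(\tau-\Delta W_k^2)$: the $\tau$ part joins the drift $\bm{\varPhi}$ (Itô correction terms such as $\sigma^2c^3s^2-\tfrac{\sigma^2}{2}c^5$ and $-\tfrac{3\sigma^2}{2}c^4s$), and the centred part gives $(\tau-\Delta W_k^2)\tilde\varTheta(\bm{\mathcal C}_k)$.
\end{enumerate}
The implicit factors $\tau s_{k+1}$ and $(1+2\tau\zeta_2)^{-1}$ are handled in the same way as drift. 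Collecting the results determines $\bm{\varPhi}$, and I would check coefficients against the continuous Itô computation for $\cos\psi,\sin\psi$ derived from \eqref{xi}. Everything left over is placed in $\varTheta(\bm{\mathcal C}_{k+1})$.

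\textbf{Main obstacle.} The main obstacle is the bookkeeping of the remainder, namely proving $\sup_k\mathbb E\|\varTheta\|^p\le C(p)\tau^{3p/2}$. Every discarded term is a product of the following types, each estimated by Hölder's inequality:
\begin{itemize}
\item $\tau\cdot|\Delta W_k|\cdot$ polynomial in $\mathcal Z_{k+1}$;
\item $\tau^2\cdot$ polynomial in $\mathcal Z_{k+1}$;
\item $|\Delta W_k|^3$;
\item $\mathbb B_{k+1}^3$.
\end{itemize}
For these estimates I use $\mathbb E|\Delta W_k|^q=C\tau^{q/2}$, $|c_k|,|s_k|\le1$, and uniform moments of $\mathcal Z_k$. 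The one delicate point is ensuring that no $O(\tau)$-sized $\Delta W_k^2$ term is left uncentred outside $\bm{\varPhi}$. Getting the Itô-correction coefficients exactly right is the step I would verify most carefully.
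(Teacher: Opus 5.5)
Your proposal is correct and follows essentially the paper's proof. The shared steps are the exact identities for $c_{k+1}$ and $s_{k+1}$, the expansion of $\|B_k\|/\|B_{k+1}\|$ through Lemma~\ref{lemma:B1} and \eqref{mathbbB}, the split $\Delta W_k^2=\tau-(\tau-\Delta W_k^2)$ that produces the It\^o corrections in $\bm{\varPhi}$ and the term $\tilde{\varTheta}$, and $L^p$ bounds of order $\tau^{3/2}$ on everything else. The only cosmetic difference is in the $s_{k+1}$ step: you expand $\sigma c_k\Delta W_k\,\|B_k\|/\|B_{k+1}\|$ directly, whereas the paper rewrites $\beta_k=\beta_{k+1}-\tau\bar\beta_{k+1}$ and back-substitutes the $c$-recursion; both give the coefficient $-\tfrac{3\sigma^2}{2}c_k^4s_k$ of $\Delta W_k^2$.

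Your correction of the second component of $\bm{\varPhi}$ to $-\kappa_1v-2\zeta_1x$ is right. For the uniform-in-$\tau$ high moments of $\mathcal Z_k$, note that \eqref{exp int X} alone does not give them, since it only controls $\mathbb E[e^{c_8\tau\|\bm{\mathcal C}_k\|^2}]$. It is cleaner to combine the second-moment bound from $\Gamma_2$ with the fact that $(\eta_k,\bar\eta_k)$ is Gaussian by the linear recursion \eqref{eta k}.
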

		\begin{proof}
			The evolution of first two components $(\eta_{k},\bar{\eta}_{k})$ follows directly from the equations in \eqref{VXbeta}.
			Thus, in the following, we aim to derive the evolution of $(c_{k}, s_{k})$. 
			Recall that $(\beta_{k},\bar{\beta}_{k})^{\top} = \|B_{k}\|(c_{k},s_{k})^{\top}$. By using $\beta_{k+1}=\beta_{k}+\tau\bar{\beta}_{k+1}$ and  Lemma \ref{lemma:B1}, we obtain 
			\begin{align*}
				&c_{k+1} - c_{k} 
				= \frac{\beta_{k+1}-\beta_{k}}{\|B_{k+1}\|} + c_{k}  \frac{\|B_{k}\|-\|B_{k+1}\|}{\|B_{k+1}\|}  \\
				&= \frac{\tau\bar{\beta}_{k+1}}{\|B_{k+1}\|} + c_{k} \left( \frac{1}{2} \mathbb{B}_{k+1} -\frac{1}{8} \mathbb{B}_{k+1}^2 
				+  \frac{\|B_{k+1}\|^2\mathbb{B}_{k+1} ^3}{4(\|B_{k}\|+\|B_{k+1}\|)^2} 
				-  \frac{\|B_{k+1}\|^4\mathbb{B}_{k+1} ^4}{8(\|B_{k}\|+\|B_{k+1}\|)^4} \right). 
			\end{align*}
			Combining with \eqref{mathbbB}, we have 
			\begin{align*}
				c_{k+1} - c_{k} 
				&= \tau s_{k+1} -c_{k}  
				\left( \tau c_{k+1} s_{k+1} (1- \mathcal{Z}_{k+1}) 
				- 2\tau \zeta_2 s_{k+1}^2 
				+ \sigma c_{k}s_{k} \Delta W_{k} \right)\\
				&\quad - \frac{\sigma^2}{2}  c_{k}^3 \Delta W_{k}^2 (1-3s_{k}^2) 
				+ \mathcal{B}_{2,k+1}, 
			\end{align*}
			where $\mathcal{B}_{2,k+1}=  \frac{c_k \mathcal{B}_{1,k+1}}{2} -\frac{c_{k}}{8} (\mathbb{B}_{k+1}^2 - 4\sigma^2 c_{k}^2s_{k}^2\Delta W_{k}^2) + \frac{c_{k}\|B_{k+1}\|^2\mathbb{B}_{k+1} ^3}{4(\|B_{k}\|+\|B_{k+1}\|)^2} - \frac{c_{k}\|B_{k+1}\|^4\mathbb{B}_{k+1} ^4}{8(\|B_{k}\|+\|B_{k+1}\|)^4}$ and satisfies 
			\begin{align*}
				|\mathcal{B}_{2,k+1}| 
				\leq \frac{1}{2} |\mathcal{B}_{1,k+1}|
				+\frac{1}{8} \left|\mathbb{B}_{k+1}^2 - 4\sigma^2 c_{k}^2s_{k}^2\Delta W_{k}^2 \right| 
				+ \frac{1}{4} |\mathbb{B}_{k+1}|^3 
				+ \frac{1}{8} |\mathbb{B}_{k+1}|^4, 
			\end{align*}
			which means that $\sup_{0\leq k \leq N}\mathbb{E}[|\mathcal{B}_{2,k}|^{p}] \leq C(p)\tau^{\frac{3p}{2}}$, $\forall p\geq1$. 
			Consequently, 
			\begin{align*}
				&c_{k+1}-c_{k} \\
				=\,& \tau s_{k+1} -\tau c_{k}c_{k+1} s_{k+1} (1- \mathcal{Z}_{k+1}) +2\tau \zeta_2 c_{k}s_{k+1}^2  
				- \sigma c_k^2 s_k\Delta W_{k} - \frac{\sigma^2}{2} c_k^3 \Delta W_{k}^2 (c_k^2-2s_k^2)   
				+ \mathcal{B}_{2,k+1} \\ 
				=\,& \tau s_{k+1} -\tau c_{k+1}^2 s_{k+1} (1- \mathcal{Z}_{k+1}) + 2\tau \zeta_2 c_{k+1}s_{k+1}^2 + \sigma^2\tau c_{k+1}^3s_{k+1}^2-\frac{\sigma^2\tau}{2} c_{k+1}^5   
				- \sigma c_k^2 s_k\Delta W_{k}\\
				& +(\tau-\Delta W_{k}^2) \left(\frac{\sigma^2}{2} c_{k}^5-\sigma^2c_{k}^3s_{k}^2\right) + \tilde{\mathcal{C}}_{k+1}, 
			\end{align*}
			where $\sup_{0\leq k \leq N}\mathbb{E}[|\tilde{\mathcal{C}}_{k}|^{p}] \leq C(p)\tau^{\frac{3p}{2}}$, $\forall p\geq1$.

			We next consider the evolution of $s_{k}$, which is very similar to that of $c_{k}$. By \eqref{VXbeta}, $\beta_{k}=\beta_{k+1}-\tau\bar{\beta}_{k+1}$, and Lemma \ref{lemma:B1}, we have 
			\begin{align*}
				s_{k+1} - s_{k} 
				&= -\tau \mathcal{Z}_{k+1}c_{k+1} - 2\tau\zeta_2s_{k+1}
				- s_{k} \left( \tau c_{k+1} s_{k+1} (1- \mathcal{Z}_{k+1}) - 2\tau \zeta_2 s_{k+1}^2 \right) 	\\ 
				&\quad +\sigma\frac{\beta_{k}\Delta W_{k}}{\|B_{k+1}\|}  - \sigma c_k s_k^2 \Delta W_{k} 
				-\frac{\sigma^2}{2} c_{k}^2 s_{k} \Delta W_{k}^2 (1-3s_{k}^2) + \mathcal{B}_{3,k+1}, 
			\end{align*}
			where $\sup_{0\leq k \leq N}\mathbb{E}[|\mathcal{B}_{3,k}|^{p}] \leq C(p)\tau^{\frac{3p}{2}}$, $\forall p\geq1$. 
			It follows from $\beta_{k}=\beta_{k+1}-\tau\bar{\beta}_{k+1}$ that 
			\begin{align*}
				s_{k+1} - s_{k}
				&= -\tau c_{k+1} + \tau c_{k+1}^3(1-\mathcal{Z}_{k+1}) 
				+ \tau c_{k+1}s_{k+1}(1-\mathcal{Z}_{k+1})(s_{k+1}-s_{k}) \\
				& \quad - 2\tau\zeta_2c_{k+1}^2s_{k+1} 
				- 2\tau\zeta_2s_{k+1}^2 (s_{k+1}-s_{k})
				- \frac{\sigma^2}{2}c_{k}^2 s_{k} \Delta W_{k}^2 (1-3s_k^2)
				\\
				& \quad +\sigma (c_{k+1}-\tau s_{k+1}) \Delta W_{k} 
				- \sigma c_{k}s_{k}^2\Delta W_{k} 
				+ \mathcal{B}_{3,k+1}. 
			\end{align*}
			By using the equation satisfied by $c_{k}$, namely, 
			\begin{equation*}
				c_{k+1}=c_{k}+\tau \varPhi_{3}(\mathcal{Z}_{k+1},c_{k+1},s_{k+1})-\sigma c_{k}^2 s_{k}\Delta W_{k} +(\tau-\Delta W_{k}^2) \left(\frac{\sigma^2}{2} c_{k}^5-\sigma^2c_{k}^3s_{k}^2\right) + \tilde{\mathcal{C}}_{k+1}, 
			\end{equation*}
			where $\varPhi_{3}(z,c,s) = s -c^2 s (1-z) + 2\zeta_2 cs^2 + \sigma^2c^3s^2-\tfrac{\sigma^2}{2} c^5$, we have 
			\begin{align*}
				s_{k+1} - s_{k} 
				&= -\tau c_{k+1} + \tau c_{k+1}^3(1-\mathcal{Z}_{k+1}) - 2\tau\zeta_2c_{k+1}^2 s_{k+1}
				+\sigma c_{k}^3 \Delta W_{k} 
				-\frac{3\sigma^2}{2} c_{k}^4 s_{k}\Delta W_{k}^2  \\ 
				& \quad + \tau c_{k+1}s_{k+1}(1-\mathcal{Z}_{k+1})(s_{k+1}-s_{k}) 
				- 2\tau\zeta_2s_{k+1}^2 (s_{k+1}-s_{k}) 
				- \sigma\tau s_{k+1}\Delta W_{k} \\ 
				& \quad + \sigma \Delta W_{k} \left( \tau \varPhi_{3} +(\tau-\Delta W_{k}^2) (\tfrac{\sigma^2}{2} c_{k}^5-\sigma^2c_{k}^3s_{k}^2) + \tilde{\mathcal{C}}_{k+1} \right) 
				+ \mathcal{B}_{3,k+1} \\
				&= -\tau c_{k+1} + \tau c_{k+1}^3(1-\mathcal{Z}_{k+1}) - 2\tau\zeta_2c_{k+1}^2s_{k+1} 
				-\frac{3\sigma^2\tau}{2} c_{k}^4 s_{k} 
				+\sigma c_{k}^3 \Delta W_{k} \\
				& \quad + \frac{3\sigma^2}{2}(\tau-\Delta W_{k}^2)  c_{k}^4 s_{k} 
				+ \tilde{\mathcal{S}}_{k+1}, 
			\end{align*}
			where $\sup_{0\leq k \leq N}\mathbb{E}[|\tilde{\mathcal{S}}_{k}|^{p}] \leq C(p)\tau^{\frac{3p}{2}}$, $\forall p\geq1$. 	
		\end{proof}

		\bibliographystyle{plain}
		\bibliography{srb.bib}

	\end{document}